\documentclass[a4paper,twoside,english]{amsart}
\usepackage{graphicx}  
\usepackage{amsmath,amssymb,amsthm, amscd}
\usepackage{amssymb,fge}
\usepackage{tikz-cd}
\usepackage[T1]{fontenc}
\usepackage[latin9]{inputenc}
\usepackage{babel}
\usepackage{verbatim}
\makeatletter
\let\@wraptoccontribs\wraptoccontribs
\makeatother

\usepackage{lipsum}

\newcommand\blfootnote[1]{%
  \begingroup
  \renewcommand\thefootnote{}\footnote{#1}%
  \addtocounter{footnote}{-1}%
  \endgroup
}

\usepackage{color}

\usepackage[left=3.2cm,right=3.5cm,top=3cm,bottom=3cm]{geometry}
\usepackage{indentfirst}           
\usepackage{underscore}
\usepackage{enumitem}
\usepackage{relsize}
\usepackage{varwidth}
\usepackage{mathtools} 
\usepackage{hyperref}
\hypersetup{
    colorlinks=true,
    linkcolor=blue,
    filecolor=magenta,      
    urlcolor=cyan,
}
\urlstyle{same}
\usepackage{amssymb}
\usepackage[all]{xy}
\makeatletter
\renewcommand*\env@matrix[1][*\c@MaxMatrixCols c]{%
  \hskip -\arraycolsep
  \let\@ifnextchar\new@ifnextchar
  \array{#1}}
\makeatother

\newcommand\Z{\mathbb{Z}}
\newcommand\Q{\mathbb{Q}}

\newcommand\F{{\mathbb{F}}}



\def\F{{\mathbb{F}}}

\def\P{{\mathfrak{P}}}

\def\Q{{\Bbb Q}}

\def\Z{{\Bbb Z}}

\def\P{{\Bbb P}}
\def\C{{\Bbb C}}
\def\F{{\Bbb F}}

\def\CVD{{\hfill\hfil{\lower 2pt\hbox{\vrule\vbox to 7pt
{\hrule width  5pt\varphifill\hrule}\varphirule}}}\par}

\DeclareMathOperator{\Orb}{Orb}

\newcommand{\mysetminus}{\mathbin{\fgebackslash}}

\newtheorem{theorem}{Theorem}[section]
\newtheorem{lemma}[theorem]{Lemma}
\newtheorem{proposition}[theorem]{Proposition}
\newtheorem{proposition-definition}[theorem]{Proposition-Definition}
\newtheorem{corollary}[theorem]{Corollary}
\newtheorem{conjecture}[theorem]{Conjecture}

\theoremstyle{definition}
\newtheorem{definition}[theorem]{Definition}
\theoremstyle{remark}
\newtheorem{remark}{Remark}
\newtheorem{example}{Example}


\theoremstyle{theorem}
\newtheorem{thm}{Theorem}

\theoremstyle{remark}
\newtheorem{rem}{Remark}

\newcommand*{\Scale}[2][4]{\scalebox{#1}{$#2$}}%

\title{Counting points of bounded height in monoid orbits}   
\author[Wade Hindes]{Wade Hindes\\ (\lowercase{with appendix by} Umberto Zannier)}
\begin{document} 
\maketitle
\blfootnote{2010 \emph{Mathematics Subject Classification}: Primary: 37P15, 37P05. Secondary: 11G50, 11D45.}
\begin{abstract} Given a set of endomorphisms on $\mathbb{P}^N$, we establish an upper bound on the number of points of bounded height in the associated monoid orbits. Moreover, we give a more refined estimate with an associated lower bound when the monoid is free. Finally, we show that most sets of rational functions in one variable satisfy these more refined bounds.        
\end{abstract} 
\section{Introduction} 
Let $H$ be the absolute multiplicative Weil height on $\mathbb{P}^N(\overline{\mathbb{Q}})$ and let $K$ be a number field. Then given a subset $X\subseteq\mathbb{P}^N(\overline{\mathbb{Q}})$ of interest in some context, the growth rate of the number of $K$-points in $X$ of bounded height, 
\begin{equation*}
X(K,B):=\#\{Q\in X\cap\mathbb{P}^N(K)\,: \, H(Q)\leq B\},
\end{equation*}   
is known to encode interesting invariants of $X$ and $K$. For instance, if $X=\mathbb{P}^N(K)$, then
$X(K,B)\sim C_{K,N}B^{(N+1)[K:\mathbb{Q}]}$ where $C_{K,N}$ depends on the regulator, class group, etc. of $K$. If $X$ is an abelian variety, then $X(K,B)\sim C_{K,X}\log(B)^{r/2}$ where $r$ is the rank of the Mordell-Weil group $X(K)$. If $X$ is a smooth curve of genus at least $2$, then $X(K,B)\sim C_{K,X}$. More generally, if $X$ is a thin set, i.e., a proper Zariski closed subset or the image of some generically finite morphism of degree at least two, then Theorem 3 in \cite[\S13.1]{Serre} implies that 
\begin{equation}\label{serre} X(K,B)\ll B^{(N+1/2)[K:\mathbb{Q}]}\log(B). 
\end{equation} 
Likewise there are a few height-counting results in arithmetic dynamics, where orbits play the role of $X$; see \cite{K3,LRT,KawaguchiSilverman,SilvK3,Markoff} for examples on Markoff varieties, K3 surfaces, and projective space. For instance, suppose that $\phi$ is a dominant rational self-map of $\mathbb{P}^N$ with dynamical degree $\delta_\phi>1$. Then, if $P\in\mathbb{P}^N(K)$ is a point such that the orbit $\Orb_\phi(P)=\{\phi^n(P)\}_{n\geq0}$ is Zariski dense, the Kawaguchi-Silverman Conjecture predicts that \vspace{.05cm} 
\begin{equation}\label{dyndeg}
\#\{Q\in\Orb_\phi(P)\,:\, H(Q)\leq B\}\sim\log(\delta_\phi)^{-1}\log\log(B); \vspace{.05cm} 
\end{equation} 
see \cite{KawaguchiSilverman} for the relevant definitions and background. Of course, this asymptotic is known in the case of morphisms, when $\deg(\phi)=\delta_\phi>1$ and $P$ is not preperiodic. Similarly if $S=\{\phi_1,\dots\phi_s\}$ is a set of endomorphisms of degree at least two equipped with a probability measure $\nu$, then for almost every sequence $\gamma$ of elements of $S$, we have the analogous asymptotic to \eqref{dyndeg} for random orbits: \vspace{.05cm}  
\begin{equation}\label{randomdyndeg} 
\;\;\;\#\{Q\in\Orb_\gamma(P)\,:\, H(Q)\leq B\}\sim \log(\delta_{S,\nu})^{-1}\log\log(B),\qquad\;\;\delta_{S,\nu}=\Pi_{\phi\in S}\deg(\phi)^{\nu(\phi)}.\vspace{.075cm}     
\end{equation}
Here, the bound holds for all $P$ with large enough height; see \cite[Corollary 1.3]{LRT} for details. 

In this paper, we study the problem of counting points of bounded height in monoid (or semigroup) orbits in $\mathbb{P}^N$, that is, counting all of the points of bounded height obtained by applying all possible compositions of maps within a fixed set $S$ to a given initial point $P$; compare to \cite{K3,Markoff}. Intuitively, one expects that if the maps in $S$ are related in some way (for instance, if they commute), then this should cut down the number of possible points in the associated orbits. However, for most $S$ we expect to see no relations (free monoids), and with this in mind, we have the following result; here and throughout, $M_S$ denotes the monoid generated under composition by a set $S$ of endomorphisms of $\mathbb{P}^N$ defined over $\overline{\mathbb{Q}}$. \vspace{.1cm}  

\begin{theorem}{\label{thm:count}} Let $S=\{\phi_1,\dots, \phi_s\}$ be a set of endomorphisms on $\mathbb{P}^N(\overline{\mathbb{Q}})$ with distinct degrees all at least two. If $M_S$ is free, then for all $\epsilon>0$ there exists an effectively computable positive constant $b=b(S,\epsilon)$ and a constant $B_S$ depending only on $S$ such that  
\begin{equation*}
(\log B)^{b}\ll\#\{f\in M_S\,:\, H(f(P))\leq B\}\ll (\log B)^{b+\epsilon}\vspace{.15cm}
\end{equation*} 
holds for all $P\in\mathbb{P}^N(\overline{\mathbb{Q}})$ with $H(P)>B_S$. Moreover, the implicit constants and error terms depend on $P$ and are effectively computable if $B_S$ is.   \vspace{.1cm} 
\end{theorem}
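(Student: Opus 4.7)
The plan is to convert the height-counting problem into a combinatorial count of compositions in the free monoid $M_S$ weighted by multiplicative degree, and then to analyze that count via its Dirichlet generating series. First I would relate $h(f(P))$ to $\deg(f) := \prod_j d_{i_j}$ for a composition $f = \phi_{i_k} \circ \cdots \circ \phi_{i_1}$: for each $\phi_i \in S$ the functorial height inequality gives $|h(\phi_i(Q)) - d_i h(Q)| \le C_i$ with $h = \log H$, and iterating along the composition while summing a geometric series in the factors $d_{i_j} \ge 2$ yields
$$|h(f(P)) - \deg(f)\cdot h(P)| \le C_S \cdot \deg(f),$$
with $C_S$ depending only on $S$. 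Taking $B_S = \exp(2C_S)$, for $H(P) > B_S$ the set $\{f : H(f(P)) \le B\}$ is sandwiched between $\{f : \deg(f) \le T^-\}$ and $\{f : \deg(f) \le T^+\}$ with $T^{\pm} = \log B / (h(P) \mp C_S) = \Theta_P(\log B)$.

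Since $M_S$ is free, every word $(i_1, \ldots, i_k) \in \{1, \ldots, s\}^*$ gives a distinct element, so $N(T) := \#\{f \in M_S : \deg(f) \le T\}$ equals the number of words with $\prod_j d_{i_j} \le T$. The associated Dirichlet series
$$Z(s) = \sum_{f \in M_S} \deg(f)^{-s} = \frac{1}{1 - \sum_{i=1}^{s} d_i^{-s}}$$
has its abscissa of convergence at the unique $b_0 > 0$ satisfying $\sum_i d_i^{-b_0} = 1$; this $b_0$ is effectively computable from $S$ and is the natural exponent of the theorem. The upper bound is then immediate: for any $\epsilon > 0$, bounding $1 \le (T/\deg(f))^{b_0 + \epsilon/2}$ whenever $\deg(f) \le T$ gives $N(T) \le T^{b_0 + \epsilon/2}\, Z(b_0 + \epsilon/2) \ll T^{b_0 + \epsilon/2}$, which translates to the upper bound $(\log B)^{b+\epsilon}$ with $b := b_0 - \epsilon/2$.

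For the lower bound I view $p_i := d_i^{-b_0}$ as a probability measure on $\{1, \ldots, s\}$, with mean log-degree $\mu := \sum_i p_i \log d_i$ and Shannon entropy $\mathcal{H} := -\sum_i p_i \log p_i$. The defining equation of $b_0$ gives the key identity $\mathcal{H} = b_0 \mu$. Choosing $k$ slightly below $\log T^- / \mu$ and restricting to words of length $k$ whose letter statistics approximate $(\lfloor k p_1 \rfloor, \ldots, \lfloor k p_s \rfloor)$, Stirling's formula produces at least
$$\binom{k}{\lfloor k p_1 \rfloor, \ldots, \lfloor k p_s \rfloor} \asymp k^{-(s-1)/2}\, e^{k\mathcal{H}} \gg T^{b_0 - \epsilon/2}$$
such words, each with degree close to $e^{k\mu} \le T^-$; freeness of $M_S$ makes these distinct, yielding the desired lower bound $N(T^-) \gg (\log B)^{b}$.

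The main obstacle is ensuring that \emph{enough} near-optimal words actually satisfy $\deg(f) \le T^-$ rather than merely $\deg(f) \approx T^-$: a Chernoff- or Chebyshev-type concentration estimate for the tilted random walk $\sum_j \log d_{i_j}$ under $p$ supplies this, and one must absorb both the polynomial Stirling factor $k^{-(s-1)/2}$ and the safety shift in $k$ into the available $\epsilon/2$ of slack. Effectivity throughout is inherited from that of the local height constants $C_i$ and of the root $b_0$ of the explicit transcendental equation $\sum_i d_i^{-b_0} = 1$.
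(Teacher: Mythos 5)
Your proposal is correct in substance and takes a genuinely different route through the combinatorial core of the argument. Both proofs begin the same way: a telescoped height estimate (your $|h(f(P))-\deg(f)h(P)|\le C_S\deg(f)$ is the paper's Lemma \ref{lem:tate}) sandwiches $\{f\in M_S: H(f(P))\le B\}$ between degree-counting sets with thresholds $\Theta_P(\log B)$, and freeness identifies the degree count with a word count. From there the paper approximates the weights $\log\deg(\phi_i)$ by rationals with a gcd condition (Lemma \ref{lem:approx}), recasts the count as restricted integer compositions, locates the dominant pole of the rational generating function via Rouch\'e and the Daffodil lemma (Lemma \ref{lem:poles}), and closes with a Mean Value Theorem estimate forcing the two exponents $\log(\beta_1^{-u})\le\log(\alpha_1^{-u})$ within $\epsilon$ of each other; your $b_0$ defined by $\sum_i d_i^{-b_0}=1$ is precisely the common limit of those exponents as $\delta\to0$. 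You instead work directly with the Dirichlet series $Z(s)=\bigl(1-\sum_i d_i^{-s}\bigr)^{-1}$: Rankin's trick gives the upper bound $N(T)\ll_\epsilon T^{b_0+\epsilon/2}$ immediately, and the type-class/entropy count with $\mathcal{H}=b_0\mu$ gives the matching lower bound. This is shorter, avoids the rational approximation and pole-location machinery entirely, and exhibits the limiting exponent intrinsically; what it gives up is the explicit secondary data the paper extracts along the way (the leading constants in \eqref{explicit4} and the numerics of Example \ref{eg:twomaps}). Two minor points: the concentration estimate you flag as the main obstacle is not actually needed, since if you fix the letter counts exactly (adjusting the floors so they sum to $k$) every word in the type class has the \emph{same} degree, with $\log\deg(f)$ within $O_S(1)$ of $k\mu$, so building that $O_S(1)/\mu$ safety margin into the choice of $k$ already forces $\deg(f)\le T^-$, and the Stirling factor $k^{-(s-1)/2}\asymp(\log T)^{-(s-1)/2}$ is absorbed into the $\epsilon/2$ of slack; and for effectivity you should take $b$ to be a computable rational slightly below $b_0-\epsilon/2$, which your argument allows because any exponent strictly below $b_0$ serves in the lower bound and any exponent strictly above $b_0$ serves in the upper bound.
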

\begin{remark} When $S=\{\phi_1,\phi_2\}$ generates a free monoid with $\deg(\phi_1)=2$ and $\deg(\phi_2)=3$, then we give explicit computations for the bounds in Theorem \ref{thm:count} in Example \ref{eg:twomaps} below.     
\end{remark}  
In particular, we can use the upper bound in Theorem \ref{thm:count} on the number of functions in the free case to give an upper bound on the number of points of bounded height in arbitrary dynamical orbits; compare to \eqref{serre}, to \cite[Theorem 4.15]{K3}, and to the asymptotic for abelian varieties above. In what follows, $\Orb_S(P)=\{f(P):f\in M_S\}$ denotes the \emph{total orbit} of $P$ under the monoid $M_S$. \vspace{.1cm}      
\begin{corollary}{\label{cor:count}} Let $S=\{\phi_1,\dots, \phi_s\}$ be a set of endomorphisms on $\mathbb{P}^N(\overline{\mathbb{Q}})$ all of degree at least two (and distinct if $s\geq2$). Then there exists an effectively computable positive constant $b$ and a constant $B_S$ depending only on $S$ such that  \vspace{.05cm} 
\[\#\{Q\in\Orb_S(P)\,:\, H(Q)\leq B\}\ll (\log B)^{b} \] 
holds for all $P\in\mathbb{P}^N(\overline{\mathbb{Q}})$ with $H(P)>B_S$.  \vspace{.1cm}       
\end{corollary}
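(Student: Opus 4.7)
The plan is to reduce the orbit count to a count of monoid elements, and then bound the latter by the word-length argument underlying the upper bound in Theorem \ref{thm:count}. First, every $Q \in \Orb_S(P)$ satisfies $Q = f(P)$ for at least one $f \in M_S$, and therefore
\[
\#\{Q \in \Orb_S(P) : H(Q) \leq B\} \leq \#\{f \in M_S : H(f(P)) \leq B\};
\]
it suffices to bound the right-hand side.

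Next, I would use the standard functorial height inequality $h(\phi(Q)) \geq d\,h(Q) - C_\phi$ for an endomorphism $\phi$ of degree $d$, and then choose $B_S$ large enough so that, for every $P$ with $H(P) > B_S$, iteration along an arbitrary composition $f = \phi_{i_k} \circ \cdots \circ \phi_{i_1}$ in $M_S$ telescopes to $h(f(P)) \gg \deg(f)\cdot h(P)$, with the implied constant depending only on $P$ and $S$. In particular, $H(f(P)) \leq B$ then forces $\deg(f) \ll \log B$. Assuming $s \geq 2$, each composition $f \in M_S$ arises from at least one word in the generators, and since every generator has degree at least $2$, that word has length at most $\log_2 \deg(f)$. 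Hence
\[
\#\{f \in M_S : \deg(f) \leq D\} \leq \sum_{k=0}^{\lfloor \log_2 D\rfloor} s^k \ll D^{\log_2 s},
\]
and combining with the degree bound yields the corollary with $b = \log_2 s$. The case $s = 1$ reduces to powers of a single endomorphism and is much easier: the orbit has only $O(\log\log B)$ points of bounded height.

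The main technical obstacle is ensuring that the threshold $B_S$ is uniform across all compositions in $M_S$, so that the telescoped error $\sum_j C_{\phi_{i_j}}\cdot \deg(\phi_{i_{j+1}}\circ\cdots\circ\phi_{i_k})$ is absorbed into the main term $\deg(f)\cdot h(P)$ independently of the word. This is arranged by taking $h(P)$ larger than a fixed threshold controlled by $\max_i C_{\phi_i}/(d_i-1)$, a quantity depending only on $S$; this keeps the exponent $b$ and all implicit constants effectively computable, as the statement requires, and the same telescoping estimate is exactly what is used in the proof of the upper bound in Theorem \ref{thm:count}.
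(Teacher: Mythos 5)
Your argument is correct, and its skeleton (points $\leq$ functions $\leq$ words, then Tate telescoping to convert $H(f(P))\leq B$ into $\deg(f)\ll \log B$, then a count of words of bounded degree) matches the paper's, but the counting step is genuinely different and more elementary. The paper handles the possible non-freeness of $M_S$ by passing to the free monoid $F_S$ of formal words acting on $\mathbb{P}^N$ and then rerunning the entire proof of Theorem \ref{thm:count} for $F_S$ — restricted integer compositions, the rational approximation of the $\log\deg(\phi_i)$ from Lemma \ref{lem:approx}, and the generating-function/pole analysis of Lemma \ref{lem:poles} — to get two-sided asymptotics for $\#\{w\in F_S: H(w\cdot P)\leq B\}$, of which only the upper bound is used. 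You instead bound $\#\{f\in M_S:\deg(f)\leq D\}$ by the trivial estimate $\sum_{k\leq \log_2 D}s^k\ll D^{\log_2 s}$, using only that every generator has degree at least $2$, which sidesteps all of the analytic combinatorics and is perfectly adequate since Corollary \ref{cor:count} asks only for an upper bound with \emph{some} effective exponent. What the paper's route buys is a sharper exponent when the minimal degree exceeds $2$ (its crude bound is $b_2\leq \log(s)/(c_1-0.1)$ with $c_1=\log d_{\min}$, versus your $\log_2 s$) together with a matching lower bound on the word count, in the spirit of Conjecture \ref{conjecture}; what your route buys is brevity and self-containedness. Two small points: the uniform threshold coming from Lemma \ref{lem:tate} is $C_S/(d_S-1)=\sup_i C(\phi_i)/(\min_i d_i-1)$ rather than $\max_i C_{\phi_i}/(d_i-1)$ (either depends only on $S$, but the former is the constant the telescoping actually produces), and your separate treatment of $s=1$ is needed exactly as in the paper, since the exponent $\log_2 s$ degenerates to $0$ there.
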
 
\begin{remark} Although we expect that $\log(B)^{b}$ is also a lower bound for some choice of $b$ and most $S$ (see Conjecture \ref{conjecture} and Theorem \ref{thm:freemonoid} below), we note that it is only an upper bound in general, even for $s\geq2$. For instance, if $M_S$ is a free commutative monoid (e.g., if $S$ is a certain set of monic power maps), then the asymptotic height growth rate in orbits will be a constant times $\log\log(B)$; see \cite[\S 5]{LRT} for details. This matches the case of a single map (also a commutative monoid); see also \eqref{dyndeg} and \eqref{randomdyndeg} above.    
\end{remark} 
Motivated by the upper and lower bounds in Theorem \ref{thm:count}, we conjecture the following exact asymptotic for the number of points (not functions) of bounded height in total orbits associated to free monoids:  \vspace{.1cm} 
\begin{conjecture}\label{conjecture} Let $S=\{\phi_1,\dots, \phi_s\}$ be a set of endomorphisms on $\mathbb{P}^N(\overline{\mathbb{Q}})$ with distinct degrees all at least two. If $M_S$ is free, then there exist constants $a_P=a(S,P)$ and $b=b(S)$ such that \vspace{.05cm} 
\[\lim_{B\rightarrow\infty}\frac{\#\{Q\in\Orb_S(P)\,:\, H(Q)\leq B\}}{(\log B)^{b}}=a_P\vspace{.025cm} \]
holds for all sufficiently generic $P\in\mathbb{P}^N(\overline{\mathbb{Q}})$ (i.e., all $P\in\mathbb{P}^N(\overline{\mathbb{Q}})$ outside of the union of a proper Zariski closed subset and a set of points of bounded height).  
\end{conjecture}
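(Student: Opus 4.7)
The plan is to leverage Theorem \ref{thm:count}, which already gives the correct order of magnitude for counting \emph{functions}, and to refine it in two ways: first, to remove the $\epsilon$-loss and obtain an exact asymptotic, and second, to pass from counting elements of $M_S$ to counting their distinct images in $\Orb_S(P)$.

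For the exact counting of compositions, the key observation is that since $M_S$ is free, the Dirichlet-type generating series
\[
Z(s) \;:=\; \sum_{f \in M_S} \deg(f)^{-s} \;=\; \sum_{n \geq 0} \Bigl( \sum_{i=1}^s d_i^{-s} \Bigr)^{n} \;=\; \frac{1}{1 - \sum_{i=1}^s d_i^{-s}}
\]
converges absolutely for $\real(s) > b$, where $b = b(S)$ is the unique positive root of $\sum_{i=1}^s d_i^{-b} = 1$, and has a simple pole at $s=b$. A Wiener--Ikehara Tauberian theorem applied to $Z$ then yields $\#\{f \in M_S : \deg(f) \leq T\} \sim c_0 T^b$ for an explicit $c_0 = c_0(S) > 0$. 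Combining this with the standard height estimate $|\log H(\phi_i(Q)) - d_i \log H(Q)| \leq C_{\phi_i}$ and telescoping along compositions (in the spirit of the Call--Silverman canonical height, but now for the monoid $M_S$) gives $\log H(f(P)) = \deg(f)\bigl(\hat h_S(P) + o(1)\bigr)$ for a suitable ``canonical height'' $\hat h_S(P)$ that is positive once $H(P) > B_S$. Together these yield the exact asymptotic $\#\{f \in M_S : H(f(P)) \leq B\} \sim a_P (\log B)^b$, and this already proves the upper bound in the conjecture via the trivial inequality $\#\{Q \in \Orb_S(P) : H(Q) \leq B\} \leq \#\{f \in M_S : H(f(P)) \leq B\}$.

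The matching lower bound, and hence the exact constant $a_P$, requires controlling accidental collisions $f(P) = g(P)$ for distinct $f, g \in M_S$. For each fixed pair, the locus $V_{f,g} = \{P \in \mathbb{P}^N : f(P) = g(P)\}$ is a proper Zariski closed subvariety (since $M_S$ is free, the morphisms $f$ and $g$ are genuinely distinct), but the union over all pairs is only a countable union of such. Collisions across different degrees are easy to eliminate using the height estimate above: if $\deg(f) \neq \deg(g)$ and $H(P) > B_S$, then $H(f(P)) \neq H(g(P))$. The genuine difficulty is the same-degree case. I would attempt to prove a uniform rigidity statement: for each common degree $D$, the union $\bigcup_{\deg(f) = \deg(g) = D,\, f \neq g} V_{f, g}$ lies in a finite union of proper subvarieties of $\mathbb{P}^N$ whose structure can be controlled as $D$ varies, so that a single ``generic $P$'' (outside a proper Zariski closed set) avoids every collision variety simultaneously. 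A weaker fallback would be to show that the set of colliding pairs forms a zero-density subset of $M_S \times M_S$ relative to the $(\log B)^b$ count, so that collisions contribute a lower-order error and the leading constant $a_P$ survives unchanged.

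The decisive obstacle is exactly this function-to-points reduction for same-degree compositions: it apparently requires a new arithmetic-dynamical rigidity or uniform-boundedness input, which is why the statement is posed here as a conjecture rather than a theorem. Granting such a reduction, both the universal exponent $b = b(S)$ and the existence and positivity of $a_P$ follow cleanly from the Tauberian analysis described above, with $a_P$ being a positive constant encoding the interaction between the ``canonical height'' $\hat h_S(P)$ and the residue of $Z(s)$ at $s = b$.
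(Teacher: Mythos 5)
This statement is Conjecture \ref{conjecture}: the paper does not prove it, so there is no proof of record to compare against. The evidence the paper offers is Theorem \ref{thm:count} (counting \emph{functions}, with an unavoidable $\epsilon$-loss), the one-dimensional Theorems \ref{thm:rational} and \ref{thm:freemonoid} (proved by bounding fibers $\#\{f: f(P)=Q\}$ via finiteness of rational/integral points on the curves $\phi_j(x)=\phi_k(y)$ and $(\phi_i(x)-\phi_i(y))/(x-y)=0$), and the closing remark that in higher dimension one would need non-density of rational points on $V_{f,g}$, plausibly via Bombieri--Lang. Your identification of the same-degree collision problem as the decisive obstacle is exactly the paper's own assessment, and your $b$ (the root of $\sum_i d_i^{-b}=1$) is the right exponent. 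However, two steps you present as established are themselves false or open under the stated hypotheses.

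First, the Tauberian step has a lattice obstruction: the hypotheses only require \emph{distinct} degrees, so all $d_i$ may be powers of a common integer (e.g.\ degrees $2$ and $4$). Then $Z(s)=1/(1-\sum_i d_i^{-s})$ has infinitely many poles on the line $\real(s)=b$, Wiener--Ikehara does not apply, and in fact $\#\{f\in M_S:\deg(f)\le T\}$ is \emph{not} asymptotic to $c_0T^b$: it oscillates log-periodically between two constant multiples of $T^b$, so the limit you need does not exist. This is one reason the paper proves only two-sided bounds with exponents $b$ and $b+\epsilon$. Second, the height step is wrong: there is no single ``canonical height'' $\hat h_S(P)$ with $\log H(f(P))=\deg(f)(\hat h_S(P)+o(1))$ uniformly in $f\in M_S$ once $s\ge 2$. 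Tate telescoping (Lemma \ref{lem:tate}) only confines $h(f(P))/\deg(f)$ to the interval $[h(P)-C_S/(d_S-1),\,h(P)+C_S/(d_S-1)]$, and along different infinite words the normalized heights converge to genuinely different limits (these are the word-dependent stochastic canonical heights of the cited Healey--Hindes work), so the values spread over an interval rather than collapsing to one number. Consequently even your intermediate claim, an exact asymptotic $\#\{f\in M_S: H(f(P))\le B\}\sim a_P(\log B)^b$, does not follow and is itself open. A smaller but real error: your cross-degree argument (``$\deg(f)\neq\deg(g)$ and $H(P)>B_S$ force $H(f(P))\neq H(g(P))$'') fails because ratios of composite degrees such as $2^a/3^{b'}$ come arbitrarily close to $1$, while the multiplicative uncertainty $(h(P)+b_S)/(h(P)-b_S)$ from Lemma \ref{lem:tate} stays bounded away from $1$; so collisions across distinct degrees are not eliminated by height considerations alone. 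In short, your sketch correctly locates the main arithmetic obstruction, but the analytic half of the program (exact function count plus a monoid canonical height) needs a non-lattice hypothesis and a replacement for the nonexistent $\hat h_S(P)$ before it could yield the conjectured limit.
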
  
\begin{remark} Hence, we expect most monoid orbits in $\mathbb{P}^N$ to exhibit similar height growth as: orbits on Markoff varieties \cite{Markoff}, orbits on $K3$ surfaces in $\mathbb{P}^1\times \mathbb{P}^1\times \mathbb{P}^1$ given by $(2,2,2)$-forms \cite[Theorem 4.5]{K3}, and Mordell-Weil groups of abelian varieties. However in these cases, the relevant monoids (or the underlying varieties themselves) form groups, and there is less need to distinguish between counting functions and points. For instance if there are inverses in $M_S$, distinct functions that agree at a point determine a non-trivial fixed point, and these fixed points can typically be controlled. On the other hand in the case of abelian varieties (where one considers the monoid generated by multiplication maps), distinct functions that agree at a point determine a torsion point. Thus this situation may be avoided by throwing away a set of bounded height.  
\end{remark} 
To motivate our conjecture, we restrict our attention to morphisms of $\mathbb{P}^1$. To state our results in this setting, recall that $w\in\mathbb{P}^1(\mathbb{C})$ is called a \emph{critical value} of $\phi\in\mathbb{C}(x)$ if $\phi^{-1}(w)$ contains fewer than $\deg(\phi)$ elements. Likewise, we call a critical value $w$ of $\phi$ \emph{simple} if $\phi^{-1}(w)$ contains exactly $\deg(\phi)-1$ points. In particular, we have the corresponding notions for sets:   

\begin{definition} Let $S=\{\phi_1,\dots,\phi_s\}$ be a set of rational maps on $\mathbb{P}^1$ and let $\mathcal{C}_{\phi_i}$ denote the set of critical values of $\phi_i$. Then $S$ is called \emph{critically separate} if $\mathcal{C}_{\phi_i}\cap\mathcal{C}_{\phi_j}=\varnothing$ for all $i\neq j$. Moreover, $S$ is called \emph{critically simple} if every critical value of every $\phi\in S$ is simple.  
\end{definition} 
As evidence for Conjecture \ref{conjecture} above, we establish the following weak version for generic sets of rational maps on $\mathbb{P}^1$. In particular, we are able to count points instead of just functions.
\begin{theorem}\label{thm:rational} Let $S=\{\phi_1,\dots,\phi_s\}$ be a set of rational maps on $\mathbb{P}^1(\overline{\mathbb{Q}})$ with distinct degrees all at least four. If $S$ is critically separate and critically simple, then $M_S$ is a free monoid and for all $\epsilon>0$ there exists an effectively computable positive constant $b=b(S,\epsilon)$ and a constant $B_S$ depending only on $S$ such that \vspace{.15cm} 
\begin{equation*}
(\log B)^{b}\ll\#\{Q\in \Orb_S(P)\,:\, H(Q)\leq B\}\ll (\log B)^{b+\epsilon}\vspace{.15cm}
\end{equation*} 
holds for all $P\in\mathbb{P}^1(\overline{\mathbb{Q}})$ with $H(P)>B_S$.     
\end{theorem}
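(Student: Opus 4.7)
The plan is to verify three separate claims in turn: that $M_S$ is free, the upper bound on points, and the matching lower bound. For freeness, I would exploit the critical value calculus for compositions: if $f = \phi_{i_1} \circ \cdots \circ \phi_{i_n}$, then
\[
\CritVal(f) = \CritVal(\phi_{i_1}) \cup \phi_{i_1}\bigl(\CritVal(\phi_{i_2} \circ \cdots \circ \phi_{i_n})\bigr),
\]
and iterating this recursion produces a canonical filtration of $\CritVal(f)$ by critical-value sets. Critical separation makes $\CritVal(\phi_{i_1})$ intrinsic to $f$ --- it is essentially the portion of $\CritVal(f)$ meeting the fixed list $\bigcup_j \CritVal(\phi_j)$ --- provided pushed-forward critical values from inner letters cannot be confused with fresh critical values of a different $\phi_j$; critical simplicity together with the $\deg(\phi_i) \geq 4$ hypothesis should give enough room to rule this out. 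Peeling off the outermost letter and inducting on word length then forces any two words representing the same map to coincide.

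The upper bound on points follows immediately from the upper half of Theorem \ref{thm:count} applied to the now free monoid $M_S$:
\[
\#\{Q \in \Orb_S(P) : H(Q) \leq B\} \;\leq\; \#\{f \in M_S : H(f(P)) \leq B\} \;\ll\; (\log B)^{b + \epsilon}.
\]
For the lower bound, the lower half of Theorem \ref{thm:count} supplies $(\log B)^b$ many functions $f \in M_S$ with $H(f(P)) \leq B$, and the task is to convert this into a lower bound on distinct \emph{points}. My plan is to bound the fibers of the evaluation map $\mathrm{ev}_P \colon f \mapsto f(P)$ by a constant depending only on $P$ and $S$. If $f \neq g$ in $M_S$ satisfy $f(P) = g(P)$, repeated peeling of outer letters (using freeness and the injectivity of each $\phi_i$ away from critical values) reveals a first position where the words disagree; at that position $P$ satisfies a rigid algebraic relation of the shape $\phi_i(X) = \phi_j(Y)$ with $X, Y \in \Orb_S(P)$, and critical separation should force $P$ into a set of bounded height, absorbed into $B_S$.

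The main obstacle is precisely this uniform fiber bound: it must hold for \emph{every} $P$ with $H(P) > B_S$, not merely for Zariski-generic $P$. Concretely, I need a Northcott-style finiteness statement for the global collision locus $\{P : \exists\, f \neq g \in M_S \text{ with } f(P) = g(P)\}$ that is uniform in the word lengths involved, or equivalently a single effective height bound on this locus depending only on $S$. Establishing such a bound will likely demand a careful branched-cover or arboreal argument exploiting critical simplicity, and the assumption that every degree is at least four --- rather than the usual two --- is plausibly used here to supply enough independent critical data to preclude accidental coincidences across composites of arbitrary length.
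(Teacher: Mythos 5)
Your freeness argument never closes the loophole you yourself flag: critical separation only says $\mathcal{C}_{\phi_i}\cap\mathcal{C}_{\phi_j}=\varnothing$, and it does not prevent the pushed-forward set $\phi_{i_1}\bigl(\CritVal(\phi_{i_2}\circ\cdots\circ\phi_{i_n})\bigr)$ from meeting $\mathcal{C}_{\phi_j}$ for $j\neq i_1$, so $\CritVal(\phi_{i_1})$ is not recoverable from $\CritVal(f)$ by intersecting with $\bigcup_j\mathcal{C}_{\phi_j}$; the assertion that critical simplicity and degree at least four ``give enough room'' is not an argument, and no bookkeeping of critical values alone is known to yield the cancellation you need. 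The paper's cancellation step is of a different nature: an identity $\theta_1\circ f_2=\tau_1\circ g_2$ with $\theta_1\neq\tau_1$ (resp.\ $\theta_1=\tau_1$ and $f_2\neq g_2$) parametrizes by rational functions the curve $\theta_1(x)=\tau_1(y)$ (resp.\ $\tfrac{\theta_1(x)-\theta_1(y)}{x-y}=0$), and Pakovich's theorems show that under critical separation (resp.\ critical simplicity and degree $\geq4$) these curves are irreducible of genus $(\deg\theta_1-1)(\deg\tau_1-1)\geq 9$ (resp.\ $(\deg\theta_1-2)^2\geq4$), contradicting the existence of such a parametrization; that functional-equation input is what makes the induction on word length work, and it is absent from your sketch.

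For the passage from functions to points, your ``main obstacle'' is a misreading of the quantifiers, and the remedy you propose would not work. The implicit constants in the theorem may depend on $P$; only the exponent and the threshold $B_S$ are $P$-free, and $B_S$ comes solely from Tate's telescoping lemma (the condition $h(P)>2b_S$ with $b_S=C_S/(d_S-1)$), not from any collision analysis. Hence no height bound, uniform over word lengths and depending only on $S$, on the locus $\{P:\exists\, f\neq g\in M_S,\ f(P)=g(P)\}$ is needed --- and nothing in the hypotheses supplies one, since as the words vary this locus need not have bounded height. What is actually required, and what the paper proves in Lemma \ref{injective}, is that for each \emph{fixed} $P$ with $h(P)>2b_S$ the fiber of $f\mapsto f(P)$ over all but finitely many $Q\in\Orb_S(P)$ has size at most a constant $t_{P,S}$ depending on $P$. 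This rests on: (i) Pakovich's irreducibility and genus computations for $C_{j,k}:\phi_j(x)=\phi_k(y)$ and $C_i:\tfrac{\phi_i(x)-\phi_i(y)}{x-y}=0$; (ii) Faltings' theorem over the number field generated by $P$ and the coefficients, giving finitely many rational points and hence a height bound $\kappa_P$ on their coordinates; and (iii) Tate telescoping to convert $h(\rho(P))\leq\kappa_P$ into a bound on the length of the inner word, after which a counting argument bounds the fiber and the finitely many small-height $Q$ are absorbed into an $O(1)$. Your sketch replaces this with ``injectivity of each $\phi_i$ away from critical values'' (false for maps of degree $\geq4$) and a hoped-for Northcott statement, so the lower bound is not established. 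The upper bound, by contrast, is fine and is exactly the paper's argument.
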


Finally, since the theorem above does not directly apply to sets of polynomials, we give a different proof in this case which works quite generically. In what follows, for $m\geq2$ the polynomials $Z_m=x^m$ are called \emph{cyclic} polynomials and the polynomials $T_m$ satisfying $T_m(x+x^{-1})=x^m+x^{-m}$ are called \emph{Chebychev} polynomials (of the first kind).        
\begin{theorem}{\label{thm:freemonoid}} Let $S=\{\phi_1,\dots,\phi_s\}$ be a set of polynomials defined over $\overline{\mathbb{Q}}$, and let $a_ix^{d_i}$ denote the leading term of $\phi_i$. Suppose that $S$ satisfies the following conditions: 
\begin{enumerate}[itemsep=1.5pt,parsep=5pt, topsep=4pt]  
\item[\textup{(1)}] The set of degrees $\{d_1,\dots, d_s\}$ is a multiplicatively independent set in $\mathbb{Z}$.  \vspace{.05cm} 
\item[\textup{(2)}] The set of leading coefficients $\{a_1,\dots, a_s\}$ is a multiplicatively independent set in $\overline{\mathbb{Q}}^{*}$. 
\item[\textup{(3)}] Each $\phi\in S$ is not of the form $F\circ E\circ L$ for some polynomial $F\in\overline{\mathbb{Q}}[x]$, some cyclic or Chebychev polynomial $E$, and some linear $L\in\overline{\mathbb{Q}}[x]$. \vspace{.1cm}     
\end{enumerate}  
Then $M_S$ is a free monoid and for all $\epsilon>0$ there exists an effectively computable positive constant $b=b(S,\epsilon)$ and a constant $B_S$ depending only on $S$ such that \vspace{.15cm} 
\begin{equation*}
(\log B)^{b}\ll\#\{Q\in \Orb_S(P)\,:\, H(Q)\leq B\}\ll (\log B)^{b+\epsilon}\vspace{.15cm}
\end{equation*} 
holds for all $P\in\mathbb{P}^1(\overline{\mathbb{Q}})$ with $H(P)>B_S$.      
\end{theorem}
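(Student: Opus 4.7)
The plan is to handle freeness first via a leading-coefficient computation, then derive the upper bound directly from Corollary \ref{cor:count}, and finally obtain the lower bound by combining Theorem \ref{thm:count} with a uniform multiplicity bound on the evaluation map $f\mapsto f(P)$.

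For freeness, I would suppose $\phi_{i_1}\circ\cdots\circ\phi_{i_n}=\phi_{j_1}\circ\cdots\circ\phi_{j_m}$ as polynomials. Comparing degrees yields $\prod_k d_{i_k}=\prod_\ell d_{j_\ell}$, and condition \textup{(1)} forces the multisets of indices to coincide, so in particular $n=m$. Moreover, the leading coefficient of $\phi_{i_1}\circ\cdots\circ\phi_{i_n}$ works out to $\prod_k a_{i_k}^{P_k}$ where $P_k:=d_{i_1}\cdots d_{i_{k-1}}$ is the $k$th partial product. By condition \textup{(2)}, the exponent of each $a_i$ on the two sides must agree. Since by condition \textup{(1)} each partial product is a distinct monomial in $\{d_1,\dots,d_s\}$ and these monomials form a chain $P_1\mid P_2\mid\cdots\mid P_n$ under divisibility, the exponent vector $(E_1,\dots,E_s)$ with $E_i=\sum_{k:\,i_k=i}P_k$ determines the ordering $(i_1,\dots,i_n)$ uniquely: iteratively, the largest remaining monomial appearing in the $E_i$'s is the current top partial product, and its location (which $E_i$ contains it) reads off the corresponding letter. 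This proves freeness.

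The upper bound on points is then immediate from Corollary \ref{cor:count}, since condition \textup{(1)} guarantees distinct degrees at least two. For the lower bound, Theorem \ref{thm:count} (applicable since $M_S$ is now known to be free with distinct degrees) yields $\#\{f\in M_S:H(f(P))\leq B\}\gg(\log B)^{b}$ for $H(P)>B_S$. To convert this count of \emph{functions} into a count of \emph{points}, the plan is to show that for $H(P)>B_S$ sufficiently large the fiber multiplicities $N_P(Q):=\#\{f\in M_S:f(P)=Q\}$ are uniformly bounded by some constant $K=K(S,P)$. Condition \textup{(3)} is essential here: it ensures each $\phi_i$ is ``disintegrated'' in the Medvedev--Scanlon sense, so that dynamical Mordell--Lang-type results in the spirit of Ghioca--Tucker--Zieve, together with Ritt's polynomial decomposition theorems, prevent infinitely many coincidences $f(P)=g(P)$ with $f\neq g$. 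Combined with right-cancellation in the free monoid (valid since non-constant polynomials are surjective onto $\mathbb{P}^1$) and induction on the depth of witnesses, this should yield the uniform bound $N_P(Q)\leq K$, and hence $\#\{Q\in\Orb_S(P):H(Q)\leq B\}\geq \#\{f\}/K\gg(\log B)^{b}$.

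The main obstacle is the multiplicity bound in the last step. Without condition \textup{(3)}, the structured cases $\phi=L\circ Z_d\circ L^{-1}$ or $L\circ T_d\circ L^{-1}$ admit large symmetry groups producing unbounded multiplicities (via roots of unity and related identities), so condition \textup{(3)} must be used essentially. Translating it into a quantitative uniform bound on $N_P(Q)$ is the technical heart of the argument, and will likely rely on a careful interplay between Ritt-type polynomial decomposition and monoid-level dynamical input.
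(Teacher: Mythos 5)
Your overall architecture matches the paper's: establish freeness, deduce the upper bound from Theorem \ref{thm:count} (or Corollary \ref{cor:count}), and obtain the lower bound by showing the evaluation map $f\mapsto f(P)$ has uniformly bounded fibers on a cofinite subset of the orbit. Your freeness argument, however, takes a genuinely different route from the paper's. The paper reduces via Lemma \ref{lem:leadingterms} to the monomial case and then runs a somewhat elaborate induction using the ``$a$-degree'' functionals of Lemma \ref{lem:a-degree}. You instead try to read the ordering $(i_1,\dots,i_n)$ directly off the exponent vector $(E_1,\dots,E_s)$ of the leading coefficient, using that the partial products $P_1\mid P_2\mid\cdots\mid P_n$ form a divisibility chain. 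That idea does work, but as stated the criterion ``the largest remaining monomial appearing in the $E_i$'s'' is not well-defined: the $E_i$ are just integers, and the set of partial products $\{P_k\}$ depends on the ordering you are trying to recover, so you cannot scan for ``monomials'' inside the $E_i$. The fix is to observe that $d_jP_k=P_{k+1}$ whenever $i_k=j$, which gives $d_{i_n}E_{i_n}\geq P_{n+1}=D:=\prod_k d_{i_k}$ while $d_jE_j\leq P_2+\cdots+P_n<2P_n\leq D$ for $j\neq i_n$; thus $i_n$ is the unique $j$ with $d_jE_j\geq D$, and one recurses. With this repair your freeness proof is correct, shorter than the paper's, and a nice alternative.

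The genuine gap is in the multiplicity bound, which you yourself flag as ``the technical heart.'' You correctly observe that condition (3) must enter, and you name the right neighborhood of results (Ghioca--Tucker--Zieve, Ritt/Bilu--Tichy), but you never produce the mechanism. The paper's argument is concrete: a coincidence $f(P)=g(P)$ with $f\neq g$ is traced, via Tate's telescoping inequality and a minimal-length reduction (Lemmas \ref{lem:length1}, \ref{injective}), to an $\mathcal{S}$-integral point on one of two families of affine curves, namely $C_{j,k}:\phi_j(x)=\phi_k(y)$ with $j\neq k$ and the diagonal curves $C_i:(\phi_i(x)-\phi_i(y))/(x-y)=0$. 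Finiteness of integral points on $C_{j,k}$ follows from Siegel's theorem plus the Bilu--Tichy classification (Theorem \ref{classification}), where condition (3) rules out every pair on the list. But finiteness for $C_i$ is genuinely harder and is exactly what Zannier's appendix Theorem \ref{T.T2} supplies: it shows that unless $\phi_i=g\circ S_n\circ l$ with $S_n$ cyclic or Chebyshev (excluded by condition (3)), every irreducible component of $C_i$ has at least three points at infinity, hence has no Siegel factor. Your sketch does not isolate these two curve families, does not mention the diagonal curves $C_i$ at all, and does not indicate how ``right-cancellation plus induction on depth'' would yield a \emph{uniform} bound $N_P(Q)\leq K$ rather than merely finiteness of each fiber. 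Without the passage from curve finiteness to a uniform length bound (which in the paper uses $2^{\ell(\rho)}b_S\leq h(\rho(P))\leq\kappa_P$ to cap the ``branching depth'' $r_P$), the lower bound is not established.
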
  
We briefly outline the proofs of our results in dimension one above. The first step is to show that $M_S$ is free. For rational functions, this follows from the genus calculations in \cite{Pakovich} and Picard's theorem. For polynomials, conditions (1) and (2) of Theorem \ref{thm:freemonoid} imply that $M_S$ is free; see Theorem \ref{thm:justfree} below. In particular, Theorem \ref{thm:count} implies the desired growth rate on the number of functions $f\in M_S$ with $H(f(P))\leq B$ in both cases. On the other hand, for rational functions the genus calculations in \cite{Pakovich}, Faltings' Theorem, and Tate's telescoping lemma \ref{lem:tate} imply that
\begin{equation}{\label{keylemma}} 
\#\{f\in M_S: f(P)=Q\}
\end{equation} 
is uniformly bounded for all $Q\in\Orb_S(P)$ of sufficiently large height; see Lemma \ref{injective} below. Likewise, the same property holds for polynomials by condition (3) of Theorem \ref{thm:freemonoid} and the integral point classification theorems in \cite{Bilu} and the Appendix \ref{Appendix}. From here, the desired estimate for orbits (for both rational and polynomial functions) follows from Theorem \ref{thm:count} and the uniform bounds on (\ref{keylemma}). We note that it is possible that the full classification theorems in \cite{AZ,Bilu} can be used to strengthen the statement of Theorem \ref{thm:freemonoid}, without reference to leading terms and degrees. However, we have endeavored to give as self-contained and broadly applicable a statement as possible. 
\\[10pt] 
\textbf{Acknowledgements:} We thank Yuri Bilu, Andrew Bridy, Alexander Evetts, Joseph Silverman, and Umberto Zannier for discussions related to this paper. We also thank the authors of \cite{Zieve}; Lemma 3.2 in their paper inspired the proof of Theorem \ref{thm:justfree}. Finally, we are especially grateful to Umberto Zannier (again) for including the appendix to this paper. \vspace{.3cm} 
\section{Auxiliary results} 
To count points of bounded height in orbits, we recall some basic facts about heights and generating functions. However as motivation for what is to come, we begin with a brief sketch of the proof of Theorem \ref{thm:count}, an important ingredient for all other results in this paper. The basic idea, consistent with our earlier work on orbits attached to sequences in \cite{stochastic, Me:dyndeg}, is that the logarithmic height of a point $f(P)\in \Orb_S(P)$ is roughly determined by the size of $\deg(f)$, as long as the initial point $P$ is sufficiently generic; see Lemma \ref{lem:tate} below. With this in mind, to count the number of functions $f\in M_S$ with $\log H(f(P))\leq B$, we should in some sense simply be counting the number of $f$'s of bounded degree. In particular, when $M_S$ is a free monoid, we can relate the number of $f\in M_S$ with bounded degree to the number of restricted integer compositions of bounded size, once we approximate $\log\deg(\phi)$ for all $\phi\in S$ by rational numbers. Finally, we use generating functions (and the location of their poles via Lemma \ref{lem:approx} and Lemma \ref{lem:poles} below) to estimate the number of restricted integer compositions of bounded size. These facts together imply Theorem \ref{thm:count}. With this sketch in place, we move on and review some basic facts about heights.  
\begin{remark} Since multiplicative heights tend to grow exponentially when evaluating functions, it is convenient to use the logarithmic height $h=\log \circ H$ (instead of $H$) to state certain height estimates in dynamics. However, since height-counting on varieties is usually done with multiplicative heights, we convert back to $H$ at the end of the proof of Theorem \ref{thm:count}, to be consistent with similar results in the literature.     
\end{remark} 
Suppose that $\phi:\mathbb{P}^N(\overline{\mathbb{Q}})\rightarrow\mathbb{P}^N(\overline{\mathbb{Q}})$ is is a morphism defined over $\overline{\mathbb{Q}}$ of degree $d_\phi$. Then it is well known that  
\begin{equation}\label{functoriality}
h(\phi(P))=d_\phi h(P)+O_{\phi}(1)\;\;\;\text{for all $P\in\mathbb{P}^N(\overline{\mathbb{Q}})$;} 
\end{equation}  
see, for instance, \cite[Theorem 3.11]{SilvDyn}. With this in mind, we let 
\begin{equation}{\label{htconstant}} 
C(\phi):=\sup_{P \in \mathbb{P}^N(\bar{\mathbb{Q}})} \Big\vert h(\phi(P))-d_\phi h(P)\Big\vert 
\end{equation} 
be the smallest constant needed for the bound in (\ref{functoriality}). Then, in order to control height growth rates when composing arbitrary elements of a set of endomorphisms, we define the following fundamental notion; compare to \cite{stochastic,Me:dyndeg,Kawaguchi}. 
\begin{definition}\label{def:htcontrolled} 
A set $S$ of endomorphisms of $\mathbb{P}^N(\overline{\mathbb{Q}})$ is called \emph{height controlled} if the following properties hold: \vspace{.1cm} 
\begin{enumerate} 
\item $d_S:=\inf\{d_\phi:\phi\in S\}$ is at least $2$. \vspace{.15cm}
\item $C_S:=\sup\{C(\phi): \phi\in S\}$ is finite. \vspace{.1cm}
\end{enumerate} 
\end{definition} 
\begin{remark} We note first that any finite set of morphisms of degree at least $2$ is height controlled. To construct infinite collections, let $T$ be any non-constant set of maps on $\mathbb{P}^1$ and let $S_T=\{\phi\circ x^d\,: \phi\in T,\, d\geq2\}$. Then $S_T$ is height controlled and infinite; a similar construction works for $\mathbb{P}^N$ in any dimension.
\end{remark} 
\begin{remark} Although the results in this paper are for finite $S$, we include the notion of height controlled sets to motivate future work. For instance, many of the tools used below: canonical heights, generating functions, etc. work perfectly well for infinite sets. However, the generating functions that appear in this case are not rational, which adds some subtlety.   
\end{remark} 
As in the case of iterating a single function, it is Tate's telescoping Lemma (generalized below) that allows us to transfer information back and forth between heights and degrees; for a proof, see \cite[Lemma 2.1]{stochastic}.  
\begin{lemma}{\label{lem:tate}} Let $S$ be a height controlled set of endomorphisms of $\mathbb{P}^N(\overline{\mathbb{Q}})$, and let $d_S$ and $C_S$ be the corresponding height controlling constants. Then for all $f\in M_S$, 
\[\bigg|\frac{h(f(Q))}{\deg(f)}-h(Q)\bigg|\leq \frac{C_S}{d_S-1} \;\;\;\; \text{for all $Q\in \mathbb{P}^N(\overline{\mathbb{Q}})$.}\] 
\end{lemma}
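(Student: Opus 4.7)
The plan is the classical Tate telescoping argument, adapted to a composition of maps drawn from $S$ rather than iterates of a single map. Write $f = \phi_{i_n}\circ\phi_{i_{n-1}}\circ\cdots\circ\phi_{i_1}$ for some sequence $\phi_{i_1},\dots,\phi_{i_n}\in S$, set $Q_0=Q$ and $Q_k=\phi_{i_k}(Q_{k-1})$ for $1\le k\le n$, and put $d_k:=\deg(\phi_{i_k})$ and $D_k:=\prod_{j=1}^k d_j$, so that $D_0=1$, $Q_n=f(Q)$, and $D_n=\deg(f)$.

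The key one-step estimate is just the definition \eqref{htconstant} of $C(\phi)$, which gives $|h(Q_k)-d_k\,h(Q_{k-1})|\le C(\phi_{i_k})\le C_S$. Dividing through by $D_k$ yields the telescoping inequality
\[
\left|\frac{h(Q_k)}{D_k}-\frac{h(Q_{k-1})}{D_{k-1}}\right|\le\frac{C_S}{D_k}.
\]
Summing over $k=1,\dots,n$ and invoking the triangle inequality collapses the left-hand side to $|h(f(Q))/\deg(f)-h(Q)|$, so it remains only to bound $\sum_{k=1}^n C_S/D_k$.

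For this, note that since $S$ is height controlled we have $d_k\ge d_S\ge 2$ for every $k$, and therefore $D_k\ge d_S^k$. Hence
\[
\sum_{k=1}^n \frac{C_S}{D_k}\le C_S\sum_{k=1}^\infty\frac{1}{d_S^k}=\frac{C_S}{d_S-1},
\]
which yields the desired bound uniformly in $f\in M_S$ and $Q\in\mathbb{P}^N(\overline{\mathbb{Q}})$.

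There is no real obstacle here; the only thing to be careful about is that the argument must work for an arbitrary composition word rather than the iterates of a fixed map, which is why height-controlledness is imposed as a uniform condition on $S$ (rather than just requiring each individual $\phi\in S$ to satisfy a functoriality bound). Because $d_S$ is a common lower bound on all degrees in $S$ and $C_S$ is a common upper bound on all the $C(\phi)$, the geometric series argument closes with a constant depending only on $S$, independent of the word length $n$ or the particular letters appearing.
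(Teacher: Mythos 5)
Your proof is correct and is exactly the classical Tate telescoping argument that the paper itself invokes (it outsources the proof to \cite[Lemma 2.1]{stochastic} rather than writing it out): the one-step functoriality bound, division by the partial degree products $D_k$, telescoping, and the geometric series $\sum_k d_S^{-k}\le 1/(d_S-1)$. No issues; the uniform constants $d_S$ and $C_S$ are used precisely as intended, and the bound is independent of the word length as required.
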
 
Now that we have a tool to pass from functions yielding a bounded height relation to functions of bounded degree (via Lemma \ref{lem:tate}), we next relate counting functions of bounded degree to counting restricted integer compositions; this is essentially achieved by the fact that $\log\deg(F\circ G)=\log\deg(F)+\log\deg(G)$ for all endomorphisms $F$ and $G$. However, to make this idea precise, we briefly discuss integer compositions, a classical object of study in combinatorics. For more details, see \cite[\S I.3.1]{analytic-combinatorics}.  

Let $T\subseteq\mathbb{N}_{>0}$ be a collection of positive integers (not necessarily finite). Then a \emph{restricted composition} of an integer $n$ with summands in $T$ (or a $T$-restricted composition of $n$) is an \emph{ordered} collection of elements in $T$ whose sum is $n$. For instance, $5=2+3$ and $5=3+2$ are two different restricted compositions of $5$ for the set $T=\{2,3\}$. Given $n$, let $f^T_{n}$ be the number of distinct ways of writing $n$ as a composition with summands (parts) in $T$. Then to give an asymptotic for $f^T_{n}$, one can try and understand the ordinary generating function $f^T(z)=\sum_n f^T_{n}z^n$. In particular, if in addition $f^T(z)$ is a rational or meromorphic function, then the radius of convergence of the generating function, determined by the poles of $f^T(z)$, can be used to deduce an asymptotic for $f^T_{n}$. Luckily, the generating functions for restricted compositions are particularly simple rational functions; see Proposition I.1 in \cite{analytic-combinatorics}.             
\begin{proposition}{\label{prop:OGF}} The ordinary generating function of the number of compositions having summands restricted to a set $T\subseteq \mathbb{N}_{>0}$ is given by 
\[f^T(z)=\frac{1}{1-\sum_{n\in T}z^n}.\]
\end{proposition}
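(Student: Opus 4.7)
The plan is to prove this by the standard symbolic/analytic combinatorics dictionary: restricted compositions are just finite sequences of parts drawn from $T$, and the generating function of sequences is obtained from the generating function of parts via the geometric-series (quasi-inverse) construction.

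First I would set up the counting carefully. A composition of $n$ with parts in $T$ is, by definition, a tuple $(t_1,\dots,t_k)$ with $k\geq 0$, each $t_i\in T$, and $t_1+\cdots+t_k=n$; the empty tuple is the unique composition of $0$, so $f^T_0=1$. Let $P(z):=\sum_{n\in T}z^n$. Expanding the product $P(z)^k$ and collecting the coefficient of $z^n$ gives
\[
[z^n]\,P(z)^k \;=\; \#\{(t_1,\dots,t_k)\in T^k : t_1+\cdots+t_k=n\},
\]
which is exactly the number of compositions of $n$ with precisely $k$ parts, all in $T$. Summing over $k$ partitions the set of all $T$-restricted compositions by length, so as formal power series
\[
f^T(z)\;=\;\sum_{k\geq 0} P(z)^k.
\]

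Next I would justify inverting the series $1-P(z)$. Because $T\subseteq\mathbb{N}_{>0}$, the series $P(z)$ has no constant term, so $1-P(z)$ has constant term $1$ and is a unit in the ring $\mathbb{Z}[[z]]$ of formal power series; its inverse is precisely the geometric sum $\sum_{k\geq 0}P(z)^k$. Thus
\[
f^T(z)\;=\;\frac{1}{1-P(z)}\;=\;\frac{1}{1-\sum_{n\in T}z^n},
\]
as claimed. This proves the identity at the level of formal power series, which is all that is required for the statement.

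There is essentially no technical obstacle here; the only points that deserve care are (i) the convention that the empty composition accounts for the constant term $f^T_0=1$, and (ii) the fact that $P(0)=0$ is what makes the formal geometric series legitimate (so no convergence questions arise at the level of formal power series). Because $T$ may be infinite, one should not appeal to analytic convergence at this stage; the argument is purely algebraic in $\mathbb{Z}[[z]]$. Analytic information, such as the location of the dominant pole used later in the paper, will require extra hypotheses on $T$ and will be handled separately.
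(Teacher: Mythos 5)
Your proof is correct and is exactly the standard argument: the paper itself gives no proof, citing Proposition I.1 of Flajolet--Sedgewick \cite{analytic-combinatorics}, and the proof there is precisely your sequence-decomposition plus formal geometric series (quasi-inverse) argument. Your added care about the empty composition and about working formally in $\mathbb{Z}[[z]]$ when $T$ is infinite is appropriate and consistent with how the paper later uses the result.
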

As mentioned above, once we have an expression for $f^T(z)$ as a rational function, we can use the poles of $f^T(z)$ to estimate the $f^{T}_{n}$. Specifically, we have the following Theorem, a simple consequence of partial fractions and Newton expansion. In what follows, if $\mathcal{F}(z)=\sum_n a_nz^n$ is a power series expansion about $z=0$ for a meromorphic function $\mathcal{F}$, then we use the notation $[z^n]\mathcal{F}(z)=a_n$ to extract coefficients. 
\begin{theorem}[Expansion of rational functions]\label{thm:GF} If $\mathcal{F}(z)$ is a rational function that is analytic at zero and has poles at points $\alpha_1, \alpha_2, \dots, \alpha_m$, then its coefficients (as a power series about $0$) are a sum of exponential-polynomials: there exist $m$ polynomials $\{\Pi_j(x)\}_{j=1}^m$ such that for $n$ larger than some fixed $n_0$,  
\[[z^n]\mathcal{F}(z)=\sum_{j=1}^m \Pi_j(n){\alpha_j}^{-n}.\] 
Furthermore, the degree of $\Pi_j$ is equal to the order of the pole of $\mathcal{F}$ at $\alpha_j$ minus one. 
\end{theorem}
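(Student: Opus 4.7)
The plan is to derive the formula from the partial fraction decomposition of $\mathcal{F}$ combined with Newton's generalized binomial expansion. First I would write
\[
\mathcal{F}(z)=P(z)+\sum_{j=1}^{m}\sum_{k=1}^{r_j}\frac{c_{j,k}}{(z-\alpha_j)^{k}},
\]
where $P(z)$ is a polynomial (the ``polynomial part'' of $\mathcal{F}$, which is nonzero only when $\deg(\text{numerator})\ge \deg(\text{denominator})$), $r_j$ is the order of the pole of $\mathcal{F}$ at $\alpha_j$, and $c_{j,r_j}\neq 0$ by the definition of the order of a pole. Set $n_0:=\deg P$, so that $[z^n]P(z)=0$ for all $n>n_0$; this takes care of the polynomial contribution.

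Next, for each simple piece I would rewrite
\[
\frac{1}{(z-\alpha_j)^{k}}=\frac{(-1)^{k}}{\alpha_j^{k}}\cdot\frac{1}{(1-z/\alpha_j)^{k}}
\]
and apply the classical identity $(1-u)^{-k}=\sum_{n\ge 0}\binom{n+k-1}{k-1}u^{n}$, which is valid in a neighborhood of $0$ since $\mathcal{F}$ is analytic at $0$ (so $\alpha_j\neq 0$). Extracting the $z^n$ coefficient yields
\[
[z^{n}]\frac{1}{(z-\alpha_j)^{k}}=\frac{(-1)^{k}}{\alpha_j^{k}}\binom{n+k-1}{k-1}\alpha_j^{-n},
\]
and since $\binom{n+k-1}{k-1}$ is a polynomial in $n$ of degree $k-1$, this contribution has the form $p_{j,k}(n)\alpha_j^{-n}$ with $\deg p_{j,k}=k-1$.

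Summing over $k$ from $1$ to $r_j$ and gathering the common factor $\alpha_j^{-n}$ produces a polynomial
\[
\Pi_j(n):=\sum_{k=1}^{r_j}c_{j,k}\cdot\frac{(-1)^{k}}{\alpha_j^{k}}\binom{n+k-1}{k-1},
\]
and summing over $j$ and adding the (vanishing, for $n>n_0$) contribution of $P$ gives the stated identity. Finally, the degree statement follows by observing that in the above sum the contributions from $k<r_j$ are polynomials in $n$ of strictly smaller degree than $k=r_j$, while the $k=r_j$ summand has degree exactly $r_j-1$ with nonzero leading coefficient (because $c_{j,r_j}\neq 0$); hence $\deg \Pi_j=r_j-1$. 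The only real subtlety is verifying this nonvanishing of the leading coefficient, which is automatic from the fact that $r_j$ is the \emph{exact} order of the pole; there is no substantive obstacle beyond bookkeeping.
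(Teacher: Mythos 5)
Your proof is correct, and it follows exactly the route the paper indicates for this statement (which it attributes to Flajolet--Sedgewick, Theorem IV.9): partial fraction decomposition plus Newton's expansion of $(1-z/\alpha_j)^{-k}$, with the polynomial part absorbed into the threshold $n_0$ and the degree claim coming from the nonvanishing top coefficient $c_{j,r_j}$. No gaps worth noting.
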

In particular, after combining Proposition \ref{prop:OGF} and Theorem \ref{thm:GF}, we see that to obtain an asymptotic formula for the number of integer compositions whose parts are restricted to the  set $\{n_1,\dots,n_s\}$, we must control the roots of smallest modulus of $g(z)=1-(z^{n_1}+\dots+z^{n_s})$. With this in mind, we have the following elementary proposition. 
\begin{lemma}{\label{lem:poles}} Let $n_1, n_2, \dots, n_s$ be positive integers satisfying $\gcd(n_1,n_2,\dots,n_s)=1$. Then the polynomial $g(z)=1-(z^{n_1}+z^{n_2}+\dots+z^{n_s})$ has a unique complex root $\alpha$ of smallest modulus. Moreover, $\alpha$ is the unique positive real root of $g$, and $\alpha$ has multiplicity $one$.    
\end{lemma}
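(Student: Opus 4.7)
The plan is to separate the statement into three claims and handle them in order: (a) there is a unique positive real root $\alpha$ of $g$; (b) every other complex root $\beta$ satisfies $|\beta|>\alpha$; (c) $\alpha$ is a simple root. The technical heart of the argument is (b), which is where the coprimality hypothesis $\gcd(n_1,\dots,n_s)=1$ enters.

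For (a), I would study the auxiliary function $h(x)=x^{n_1}+\cdots+x^{n_s}$ on the positive real axis. Since each $n_i\geq 1$, the derivative $h'(x)=n_1x^{n_1-1}+\cdots+n_sx^{n_s-1}$ is strictly positive on $(0,\infty)$, so $h$ is strictly increasing there with $h(0)=0$ and $h(x)\to\infty$ as $x\to\infty$. Hence $h(x)=1$ has a unique positive solution $\alpha$, which is then the unique positive root of $g(x)=1-h(x)$. Claim (c) then follows immediately because $g'(\alpha)=-h'(\alpha)<0$, so the derivative does not vanish at $\alpha$ and $\alpha$ is a simple root.

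For (b), let $\beta$ be any complex root of $g$, so that $\beta^{n_1}+\cdots+\beta^{n_s}=1$. Applying the triangle inequality gives $|\beta|^{n_1}+\cdots+|\beta|^{n_s}\geq 1=h(\alpha)$, and then the monotonicity of $h$ on $(0,\infty)$ forces $|\beta|\geq\alpha$. This already shows $\alpha$ has minimal modulus; the remaining task is rule out other roots of the same modulus. Suppose $|\beta|=\alpha$. Then the triangle inequality above is an equality, and since the sum equals the positive real number $1$, the standard characterization of equality in the triangle inequality forces each $\beta^{n_i}$ to be a non-negative real number. Writing $\beta=\alpha e^{2\pi i t}$ with $t\in\mathbb{R}/\mathbb{Z}$, the condition $\beta^{n_i}\in\mathbb{R}_{\geq 0}$ becomes $n_i t\in\mathbb{Z}$ for every $i$.

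The main obstacle is now the final combinatorial step: from $n_i t\in\mathbb{Z}$ for all $i$ I need to deduce $t\in\mathbb{Z}$, and this is precisely where the hypothesis $\gcd(n_1,\dots,n_s)=1$ is essential. Writing $t=p/q$ in lowest terms (the condition $n_i t\in\mathbb{Z}$ forces $t$ to be rational), the divisibility $q\mid n_i$ for all $i$ yields $q\mid \gcd(n_1,\dots,n_s)=1$, hence $q=1$ and $t\in\mathbb{Z}$, so $\beta=\alpha$. Combined with (a) and (c), this completes the proof. A minor sanity check I would include is that $\alpha\in(0,1]$, with $\alpha=1$ exactly when $s=1$, so the strict inequality $|\beta|>\alpha$ for $\beta\neq\alpha$ is consistent with the trivial case where $g(z)=1-z^{n_1}$ has its roots on the unit circle and the coprimality hypothesis reduces to $n_1=1$.
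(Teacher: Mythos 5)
Your proof is correct, and it takes a genuinely more elementary route than the paper for the two nontrivial steps. The paper establishes that a positive real root has minimal modulus via Rouch\'e's theorem; you get the same conclusion directly from the triangle inequality $|\beta|^{n_1}+\cdots+|\beta|^{n_s}\geq |\beta^{n_1}+\cdots+\beta^{n_s}| = 1 = h(\alpha)$ combined with the strict monotonicity of $h$ on $(0,\infty)$, which is simpler and also immediately gives existence and uniqueness of the positive real root in one stroke. For the uniqueness of the root of smallest modulus, the paper invokes the ``Daffodil Lemma'' and the notion of the span of a power series from Flajolet--Sedgewick, whereas you use only the equality case of the triangle inequality (forcing each $\beta^{n_i}$ to be a positive real, hence $n_i t\in\Z$ for $\beta=\alpha e^{2\pi i t}$) together with the elementary observation that $n_i t\in\Z$ for all $i$ and $\gcd(n_1,\dots,n_s)=1$ force $t\in\Z$. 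The treatment of simplicity via $g'(\alpha)=-h'(\alpha)<0$ is the same in both. Your argument is self-contained and avoids importing the analytic-combinatorics machinery, which is a genuine simplification; the paper's route has the advantage of fitting into the framework of \cite[\S IV.6]{analytic-combinatorics} that the rest of the height-counting argument also uses, but for this lemma in isolation your proof is the cleaner one.
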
 
\begin{proof}  We first show that any positive real root $\alpha$ of $g$ is a root of smallest modulus for $g$ (clearly $g$ has a positive root by the Intermediate Value Theorem). This is a simple consequence of Rouch\'{e}'s Theorem: let $r<\alpha$, let $p(z)=-1-z^{n_1}-\dots-z^{n_s}$, and let $q(z)=2$. Then for all $|z|=r$, we have that  
\begin{equation*}
\begin{split} 
|p(z)|=|-1-z^{n_1}-\dots-z^{n_s}|&\leq 1+|z|^{n_1}+\dots+|z|^{n_s}\\[2pt]
&=1+r^{n_1}+\dots +r^{n_s} \\[2pt] 
&< 1+\alpha^{n_1}+\dots +\alpha^{n_s}=2-(1-\alpha^{n_1}-\dots -\alpha^{n_s})=|q(z)|
\end{split} 
\end{equation*}
by the triangle inequality and since $\alpha$ is a root of $g$. In particular, $p$ and $q$ are holomorphic functions on the disc $D_r$ of radius $r$ such that $|p(z)|<|q(z)|$ on the boundary $D_r$. Hence, Rouch\'{e}'s Theorem implies that $q$ and $q+p=g$ have the same number of roots inside $D_r$. Therefore, $g$ has no complex roots in $D_r$, and $\alpha$ is a root of smallest modulus for $g$. On the other hand, it is clear that $g$ restricted to the positive real numbers is strictly decreasing. Hence, $g$ has only one positive real root. Likewise, it is easy to see that $g'(\alpha)<0$ (since $\alpha$ is positive). Hence, $\alpha$ must be a root of multiplicity one for $g$. 

We next show that $\alpha$ is the unique complex root of $g$ of smallest modulus. This portion of the proof of Lemma \ref{lem:poles} follows from results and arguments in \cite[IV.6]{analytic-combinatorics}, namely the ``Daffodil Lemma" \cite[IV.1]{analytic-combinatorics} and the proof of \cite[Proposition IV.3]{analytic-combinatorics} on the commensurability of dominant directions for rational generating functions arising from regular languages. To see this, suppose that $\zeta=\alpha e^{i\theta}$ is another root of smallest modulus of $g$. Let $f(z)=z^{n_1}+\dots+z^{n_s}$, so that $\zeta$ satisfies $|f(\zeta)|=|1|=1=f(\alpha)=f(|\zeta|)$. In particular, \cite[Lemma IV.1]{analytic-combinatorics} implies that $\theta=2\pi r/p$ for some integers $0\leq r<p$ with $\gcd(r,p)=1$ (when $r\neq0$). Moreover, $f$ admits $p$ as a span; see \cite[Definition IV.5]{analytic-combinatorics}. In particular (since $f$ admits $p$ as a span), $f(z)=z^ah(z^p)$ for some polynomial $h$ and some non-negative integer $a$. Note also that $\gcd(a,p)=1$, since $\gcd(n_1,\dots,n_s)=1$ by assumption. On the other hand, 
\begin{equation*}
\begin{split} 
1=f(\zeta)=\zeta^a\,h(\zeta^p)=(\alpha e^{i2\pi r/p})^a\,h\big((\alpha e^{i2\pi r/p})^p\big)=e^{i2\pi ar/p}\,\alpha^a\,h(\alpha^p)=e^{i2\pi ar/p}\, f(\alpha)=e^{i2\pi ar/p}. 
\end{split}  
\end{equation*}
Hence, $ar/p\in\mathbb{Z}$. But this is impossible unless $r=0$, since $\gcd(ar,p)=1$ otherwise. In particular, $\zeta=\alpha$ and $\alpha$ is the unique complex root of $g$ of smallest modulus as claimed.      
\end{proof} 
Lastly, we include a technical result that allows us to approximate the number of bounded compositions  whose parts are restricted to the set of non-integers $\{\log\deg(\phi_1), \dots,\log\deg(\phi_s)\}$, a task that is equivalent to counting the number of functions in $M_S$ of bounded degree, by integer compositions whose parts satisfy the $\gcd$ condition needed to apply Lemma \ref{lem:poles}.   
\begin{lemma}\label{lem:approx} Let $c_1<c_2< \dots <c_s$ be distinct positive real numbers. Then for all $\delta>0$ there exist positive integers $n_1,\dots, n_s, m_1, \dots, m_s$ and $u$ such that the following conditions hold: \vspace{.1cm} 
\begin{enumerate}
\item[\textup{(1)}] $\displaystyle{c_i-\delta\leq\frac{n_i}{u}< c_i<\frac{m_i}{u}\leq c_i+\delta}$. \vspace{.2cm} 
\item[\textup{(2)}] $\gcd(n_1,\dots, n_s)=1=\gcd(m_1,\dots, m_s)$.   
\end{enumerate}    
\end{lemma}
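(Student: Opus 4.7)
The plan is to construct all of the numerators $n_i, m_i$ simultaneously with a single large denominator $u$, and then to force the two gcd conditions by exploiting a prime appearing in one carefully chosen coordinate. After replacing $\delta$ by $\min(\delta, c_1/3)$ if necessary, I may assume $0 < \delta < c_1/2$; this only strengthens the conclusion in (1). I would then pick a positive integer $u$ so large that (i) $\delta u \geq 3$, which guarantees that the integer sets
\[ A_i \;=\; \big[(c_i - \delta) u,\, c_i u\big) \cap \mathbb{Z}, \qquad B_i \;=\; \big(c_i u,\, (c_i + \delta) u\big] \cap \mathbb{Z}\]
each contain at least two elements, and (ii) both $A_1$ and $B_1$ contain a prime. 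Condition (ii) is the only point where analytic input enters: by the prime number theorem, an interval of length $\delta u$ located near the point $c_1 u$ contains $\sim \delta u / \log(c_1 u) \to \infty$ primes, so it suffices to take $u$ large.

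Having fixed such a $u$, I would take $n_1$ to be any prime in $A_1$ and let $n_2, \ldots, n_{s-1}$ be arbitrary elements of $A_2, \ldots, A_{s-1}$. For the last coordinate the key observation is that
\[ n_1 \;\geq\; (c_1 - \delta) u \;>\; \tfrac{1}{2} c_1 u \qquad\text{while}\qquad |A_s| \;\leq\; \delta u + 1 \;<\; \tfrac{1}{2} c_1 u,\]
so $A_s$ is an interval of length strictly less than $n_1$ and therefore contains at most one multiple of $n_1$. Since $|A_s| \geq 2$, I can select $n_s \in A_s$ not divisible by $n_1$. Because $n_1$ is prime, this already forces $\gcd(n_1, n_s) = 1$, whence $\gcd(n_1, \ldots, n_s) = 1$. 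The same recipe applied to the intervals $B_i$, with a prime $m_1 \in B_1$ and a suitable $m_s \in B_s$, yields $m_1, \ldots, m_s$ satisfying the analogous gcd identity.

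The main (and essentially only) nontrivial ingredient will be the existence of primes in the short intervals $A_1$ and $B_1$, but since these intervals have length proportional to $u$ at a point of size $O(u)$, the crudest fixed-ratio form of PNT is more than sufficient; no deep analytic number theory is needed. Everything else reduces to the elementary counting observation that a prime $n_1 > |A_s|$ can have at most one multiple in $A_s$. The choice of $u$ (and hence the effectiveness of the construction) is explicit in $c_1, \ldots, c_s$ and $\delta$.
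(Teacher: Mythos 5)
Your argument is correct, but it takes a genuinely different route from the paper. (Both your proof and the paper's implicitly assume $s\ge 2$, which is the only case in which the statement can hold anyway: for $s=1$ condition (2) would force $n_1=m_1=1$, contradicting (1).) The paper proceeds by an elementary deformation: it first picks \emph{any} integers $n_i,m_i,u$ satisfying (1), then sets $v=(n_2\cdots n_s\,m_2\cdots m_s\,u)^r$ and replaces the data by $n_1v+1,\ n_iv,\ m_1v+1,\ m_iv,\ uv$; any prime dividing both $n_1v+1$ and some $n_iv$ must divide $v$ and hence divide $1$, so the gcd condition holds, while taking $r$ large makes the perturbation $1/v$ of the first ratio harmless for (1). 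This is purely arithmetic and immediately effective. You instead fix one large denominator $u$ up front, use the prime number theorem to place a prime numerator in the first window $A_1$ (and $B_1$), and then a pigeonhole count --- an interval of diameter less than that prime contains at most one of its multiples, while $|A_s|\ge 2$ --- to choose the last numerator not divisible by it, which suffices since the first numerator is prime. That works, and it has the mild advantage that the intermediate numerators may be chosen arbitrarily in their windows and that the denominator is chosen once and for all; but it imports primes in intervals $[x,(1+\eta)x]$ with $\eta$ small (Bertrand's postulate is not enough when $\delta/c_1$ is small), a heavier ingredient than the problem requires, so effectivity then rests on explicit forms of the prime number theorem rather than on the paper's two-line coprimality trick.
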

\begin{remark}\label{rem:extra} In particular, we may assume that $n_1<\dots < n_s<m_1<\dots< m_s$ by choosing $\delta$ sufficiently small.  
\end{remark}  
\begin{proof} Clearly integers $n_1,\dots, n_s, m_1, \dots, m_s$ and $u$ satisfying condition (1) of Lemma \ref{lem:approx} exist. Therefore, to find integers satisfying both (1) and (2), we choose integers satisfying (1) and deform them to ensure that both conditions hold. Specifically, fix an integer $r>0$, let $v=(n_2\dots\cdot n_sm_2\dots\cdot m_s\,u)^r$, and define a new list as follows: \vspace{.05cm}
\begin{equation}\label{eq:newlist}
n_1'= n_1\, v+1,\qquad n_i'= n_i\,v, \qquad m_1'=m_1\,v+1, \qquad m_i'=m_i\, v, \qquad u'=u\, v
\end{equation}
for all $i\neq1$. In particular, we note that $\frac{n_i'}{u'}=\frac{n_i}{u}$ and $\frac{m_i'}{u'}=\frac{m_i}{u}$ for all $i\neq2$ and that  
\[\frac{n_1'}{u'}=\frac{n_1}{u}+\frac{1}{v}\qquad \text{and}\qquad \frac{m_1'}{u'}=\frac{m_1}{u}+\frac{1}{v}.\]
Therefore, we may certainly choose $r$ sufficiently large so that $n_1',\dots, n_s', m_1', \dots, m_s'$ and $u'$ satisfying condition (1), since the original sequence does. On the other hand, it is easy to see that $\gcd(n_1',n_i')=1$ and $\gcd(m_1',m_i')=1$ for all $i\ne2$ by construction. For instance, suppose that $p$ is a prime such that $p|n_1'$ and $p|n_i'$ for some $i\neq2$. Then since $p|n_i'$, we see that $p|v$ or $p|n_i$. But if $p|v$, then $p|n_1v$ and $p|n_1'$. In particular, $p|(n_1'-n_1v)=1$ by \eqref{eq:newlist}, a contradiction. Likewise, if $p|n_i$, then $p|v$ by definition of $v$. Therefore, we may repeat the argument above to reach a contradiction. Similarly, the fact that $\gcd(m_1',m_i')=1$ holds for all $i\ne2$ follows mutatis mutandis. In particular, we see that both conditions (1) and (2) of Lemma \ref{lem:approx} hold for the new list $n_1',\dots, n_s', m_1', \dots, m_s'$ and $u'$, which completes the proof.                   
\end{proof} 
\section{height-counting in orbits}\label{sec:htcount}      
With the necessary background in place, we are ready to prove the bounds on the number of functions $f\in M_S$ yielding a bounded height relation from the Introduction.   
\begin{proof}[(Proof of Theorem \ref{thm:count})] Let $S=\{\phi_1, \dots,\phi_s\}$ be a finite set of endomorphisms on $\mathbb{P}^N$ all of degree at least $2$, and suppose that the monoid $M_S$ generated by $S$ under composition is free. We begin by defining some lengths on $M_S$, which we then relate to integer compositions. Given any vector $\textbf{v}=(v_1,\dots,v_s)\in\mathbb{R}_{>0}^s$ of positive real weights, we define $\mathit{l}_{S,\textbf{v}}(\phi_i)=v_i$ for $\phi_i\in S$ and extend $\mathit{l}_{S,\textbf{v}}$ to all $f\in M_S$ by:  
\begin{equation}\label{lengths} 
\qquad\qquad\mathit{l}_{S,\textbf{v}}(f)=\sum_{j=1}^n \mathit{l}_{S,\textbf{v}}(\theta_j),\;\;\;\text{where $f=\theta_1\circ\theta_2\circ\dots\circ\theta_n$ for some $\theta_j\in S$.}
\end{equation}   
\begin{remark} Note that since $S$ is a free basis of $M_S$ there is a unique way to write $f$ as a composition of elements of $S$. In particular, $\mathit{l}_{S,\textbf{v}}$ is a well-defined function. Alternatively, in the non-free case one can define $\mathit{l}_{S,\textbf{v}}(f)$ by taking an $\inf$ over the possible expressions in (\ref{lengths}).       
\end{remark} 
On the other hand, since $M_S$ is a set of functions there is a natural choice of weighting given by $\textbf{c}=(c_1,\dots,c_s)$ where $c_i=\log\deg(\phi_i)$; moreover, we assume $c_1<c_2<\dots<c_s$. In particular, it follows from the fact that $\deg(F\circ G)=\deg(F)\cdot\deg(G)$ for morphisms that 
\begin{equation}{\label{wts/degrees}}
\mathit{l}_{S,\textbf{c}}(f)=\log\deg(f)\qquad \text{for all $f\in M_S$,} 
\end{equation} 
independent of the generating set. However, non-integer weights (like logs of integers) appear sparingly in the literature, and so we approximate the growth rate of $\mathit{l}_{S,\textbf{c}}$ (which relates to the growth rate of heights in orbits via Tate's telescoping argument) using integer weights. 

To wit, choose positive integers $n_1,\dots, n_s, m_1, \dots, m_s$ and $u$ depending on $\delta$ as in Lemma \ref{lem:approx} and Remark \ref{rem:extra}. Then it follows by construction that $u^{-1}\mathit{l}_{S,\textbf{n}}(f)\leq \mathit{l}_{S,\textbf{c}}(f)\leq u^{-1}\mathit{l}_{S,\textbf{m}}(f)$ for all $f\in M_S$. Hence, \vspace{.05cm}  
\begin{equation}\label{eq:comparison}
\{f\in M_S\,:\, \mathit{l}_{S,\textbf{m}}(f)\leq uB\}\subseteq \{f\in M_S\,:\, \mathit{l}_{S,\textbf{c}}(f)\leq B\}\subseteq\{f\in M_S\,:\, \mathit{l}_{S,\textbf{n}}(f)\leq uB\} \vspace{.05cm}  
\end{equation}  
holds for all positive $B$; here $\textbf{n}=(n_1,\dots,n_s)$ and $\textbf{m}=(m_1,\dots,m_s)$. Now given a positive integer $n$ we define 
\begin{equation}{\label{wt:bd1}}
L_{n}:=\#\{f\in M_S\,:\, \mathit{l}_{S,\textbf{m}}(f)=n\}\qquad\text{and}\qquad U_n:=\#\{f\in M_S\,:\, \mathit{l}_{S,\textbf{n}}(f)=n\}.
\end{equation}       
In particular, since $\textbf{n}$ and $\textbf{m}$ are integer weight vectors, it follows from (\ref{wts/degrees}), (\ref{eq:comparison}) and (\ref{wt:bd1}) that
\begin{equation}{\label{wt:bd2}}
\sum_{n=0}^{[uB]} L_n\leq\#\{f\in M_S\,:\, \log\deg(f)\leq B\}\leq \sum_{n=0}^{[uB]} U_n. 
\end{equation}
Here $[uB]$ denotes the nearest integer to $uB$. On the other hand, since $S$ generates $M_S$ as a free monoid, we can identify $M_S$ with the set of finite sequences of elements of $S$. In particular, $L_n$ (respectively $U_n$) represents the number of ways of writing $n$ as the sum of a sequence of elements in $\{m_1, \dots, m_s\}$ (respectively in $\{n_1,\dots,n_s\}$). Such sequences have been extensively studied in combinatorics \cite[\S I.3.1]{analytic-combinatorics} and are called restricted integer compositions. Specifically, generating functions for these compositions are known; see Proposition \ref{prop:OGF} above. In particular, \vspace{.05cm} 
\begin{equation}{\label{wt:bd3}} 
L_n=\big[z^n\big]\,\frac{1}{1-(z^{m_1}+\dots +z^{m_s})} \qquad \text{and}\qquad U_n=\big[z^n\big]\, \frac{1}{1-(z^{n_1}+\dots +z^{n_s})}. \vspace{.05cm}     
\end{equation}       
As a reminder, $[z^n] \mathcal{F}(z)$ denotes the operation of extracting the coefficient of $z^n$ in the formal power series $\mathcal{F}(z)=\sum f_nz^n$; see \cite[p.19]{analytic-combinatorics}. On the other hand, since $\gcd(n_1,\dots, n_s)=1$ and $\gcd(m_1,\dots, m_s)=1$ by construction, Lemma \ref{lem:poles} implies that both of the rational functions in (\ref{wt:bd3}) have unique poles of smallest modulus (and these poles are positive real numbers of multiplicity one). Let $\alpha_1,\dots\alpha_{r_1}$ be the roots of $g_{\textbf{n}}(z)=1-(z^{n_1}+\dots +z^{n_s})$ arranged in increasing order of modulus and let $\beta_1, \dots,\beta_{r_2}$ be the roots of $g_{\textbf{m}}(z)=1-(z^{m_1}+\dots +z^{m_s})$ arranged in increasing order of modulus. Then Theorem \ref{thm:GF} and (\ref{wt:bd3}) together imply that \vspace{.05cm}  
\begin{equation}\label{wt:bd4} 
\Scale[.91]{L_n=\kappa_1\beta_1^{-n}+p_2(n)\beta_2^{-n}+\dots+p_{r_2}(n)\beta_{r_2}^{-n} \,\;\;\;\;\text{and}\;\;\;\;\, U_n=\tau_1\alpha_1^{-n}+q_2(n)\alpha_2^{-n}+\dots+q_{r_1}(n)\alpha_{r_1}^{-n}} \vspace{.05cm} 
\end{equation} 
for some constants $\kappa_1$ and $\tau_1$ and some polynomials $p_i,q_j\in\mathbb{C}[z]$. Explicitly,
\begin{equation}\label{explicit1}
\kappa_1=\frac{-1}{\beta_1\,g_{\textbf{m}}'(\beta_1)}\quad\text{and}\qquad\tau_1=\frac{-1}{\alpha_1\,g_{\textbf{n}}'(\alpha_1)}. 
\end{equation} 
Here we use the residue method for extracting partial fraction coefficients and Newton's expansion; see the proof of \cite[Theorem IV.9]{analytic-combinatorics}. Moreover, the expressions in (\ref{wt:bd4}) and (\ref{explicit1}) hold simultaneously for all $n>n_0$ for some constant $n_0\in\mathbb{N}$. In particular, by summing (\ref{wt:bd4}) and using the triangle inequality (for both sums and differences) we see that 
\begin{equation}\label{wt:bd5}
\kappa_2\beta_1^{-m}-\kappa_3m^{r_3}|\beta_{2}|^{-m}-\kappa_4\leq\sum_{n=0}^{m} L_n \;\;\;\;\text{and}\;\;\;\;\sum_{n=0}^{m} U_n\leq \tau_2\alpha_1^{-m}+\tau_3m^{r_4}|\alpha_{2}|^{-m}+\tau_4  
\end{equation} 
holds for all $m$ sufficiently large. Again, in the interest of being as explicit as possible (at least for the main terms), we have that \vspace{.1cm}  
\begin{equation}\label{explicit2}
\kappa_2=\frac{\kappa_1(\frac{1}{\beta_1})}{(\frac{1}{\beta_1})-1}=\frac{-1}{\beta_1(1-\beta_1)g_{\textbf{m}}'(\beta_1)}\qquad\text{and}\qquad\tau_2=\frac{\tau_1(\frac{1}{\alpha_1})}{(\frac{1}{\alpha_1})-1}=\frac{-1}{\alpha_1(1-\alpha_1)g_{\textbf{n}}'(\alpha_1)}, \vspace{.1cm} 
\end{equation}
obtained by summing the corresponding geometric series. Moreover, $r_3$ (respectively $r_4$) is the maximum of the multiplicities of the roots of $g_{\textbf{m}}$ (respectively $g_{\textbf{n}}$) minus one. Hence, after taking $m=[Bu]$, combining (\ref{wt:bd2}) and (\ref{wt:bd5}), and absorbing $u$ into the relevant constants, we see that \vspace{.1cm} 
\begin{equation}\label{wt:bd6} 
\kappa_5\,C_1^B-\kappa_6\,B^{r_3}\,C_{2}^B-\kappa_4\leq\#\{f\in M_S\,:\, \log\deg(f)\leq B\}\leq\tau_5\,C_{3}^{B}+\tau_6\,B^{r_4}C_{4}^{B}+\tau_4 \vspace{.1cm} 
\end{equation} 
holds for all $B$ sufficiently large; here we use also that $Bu-1\leq[Bu]\leq Bu+1$, so that (some) of the relevant constants are given explicitly by \vspace{.1cm}   
\begin{equation}\label{explicit3}
\begin{split} 
&C_1=\frac{1}{\beta_1^u}, \qquad \kappa_5=\kappa_2\beta_1=\frac{1}{(\beta_1-1)\,g_{\textbf{m}}'(\beta_1)}, \qquad\; C_2=\frac{1}{\;|\beta_2|^u},\\[10pt]
&C_3=\frac{1}{\alpha_1^u}, \qquad \tau_5=\frac{\tau_2}{\alpha_1}=\frac{1}{\alpha_1^2\,(\alpha_1-1)\,g_{\textbf{n}}'(\alpha_1)},\qquad C_4=\frac{1}{\;|\alpha_2|^u}.\\[5pt]  
\end{split} 
\end{equation} 
We note in particular that $C_1>C_2$ and $C_3>C_4$, since $\beta_1<|\beta_2|$ and $\alpha_1<|\alpha_2|$ by construction. Now suppose that $P\in\mathbb{P}^N(\overline{\mathbb{Q}})$ is such that $h(P)>b_S:=C_S/(d_S-1)$, where $C_S$ and $d_S$ are the constants from Definition \ref{def:htcontrolled} above. Then, Tate's telescoping Lemma \ref{lem:tate} implies that \vspace{.05cm}
\[\deg(f)(h(P)-b_S)\leq h(f(P))\leq\deg(f)(h(P)+b_S).\vspace{.05cm}\] 
Therefore, for all $B$ we have the subset relations: \vspace{.1cm} 
\begin{equation}\label{wt:bd7}
\Scale[.9]{\begin{split} 
\bigg\{f\in M_S\,:\,\log\deg(f)\leq \log\bigg(\frac{B}{h(P)+B_S}\bigg)\bigg\}&\subseteq\big\{f\in M_S:\, h(f(P))\leq B\big\} \\[5pt] 
&\subseteq\bigg\{f\in M_S\,:\,\log\deg(f)\leq \log\bigg(\frac{B}{h(P)-B_S}\bigg)\bigg\}.\\[5pt]  
\end{split} }
\end{equation} 
In particular, if we replace $B$ with $\log(B/(h(P)+B_S))$ on the left side of (\ref{wt:bd6}), replace $B$ with $\log(B/(h(P)-B_S))$ on the right side of (\ref{wt:bd6}), and apply the change of base formulas for logarithms, then we deduce from (\ref{wt:bd6}) and (\ref{wt:bd7}) that \vspace{.1cm} 
\begin{equation}\label{wt:bd8} 
\begin{split}
&\Scale[1.15]{\;\;\;\;\;\kappa_5 \bigg(\frac{B}{h(P)+b_S}\bigg)^{\log(C_1)}-\;\kappa_6\,\log\bigg(\frac{B}{h(P)+b_S}\bigg)^{r_3}\bigg(\frac{B}{h(P)+b_S}\bigg)^{\log(C_2)}-\;\kappa_4}\\[15pt] 
&\leq\;\#\big\{f\in M_S:\, h(f(P))\leq B\big\} \\[15pt] 
&\leq\;\Scale[1.15]{\tau_5 \bigg(\frac{B}{h(P)-b_S}\bigg)^{\log(C_3)}+\;\tau_6\,\log\bigg(\frac{B}{h(P)-b_S}\bigg)^{r_4}\bigg(\frac{B}{h(P)-b_S}\bigg)^{\log(C_4)}+\,\;\tau_4} \\[6pt] 
\end{split}
\end{equation} 
holds for all $B$ sufficiently large and all initial points $P$ such that $h(P)>b_S$. Moreover, since most height counting problems on varieties are stated in terms of multiplicative heights, we replace $B$ with $\log B$ in (\ref{wt:bd8}) to obtain \vspace{.1cm} 
\begin{equation}\label{wt:bd9} 
\begin{split}
&\Scale[.9]{\bigg(\frac{\kappa_5}{(h(P)+b_S)^{\log(C_1)}}\bigg) \log(B)^{\log(C_1)}-\;\bigg(\frac{\kappa_6}{(h(P)+b_S)^{\log(C_2)}}\bigg)\log\bigg(\frac{\log B}{h(P)+b_S}\bigg)^{r_3}\log(B)^{\log(C_2)}-\;\kappa_4}\\[13pt] 
\leq&\;\#\big\{f\in M_S:\, H(f(P))\leq B\big\} \\[13pt] 
\leq&\;\Scale[.9]{\bigg(\frac{\tau_5}{(h(P)-b_S)^{\log(C_3)}}\bigg) \log(B)^{\log(C_3)}+\;\bigg(\frac{\tau_6}{(h(P)-b_S)^{\log(C_4)}}\bigg)\log\bigg(\frac{\log B}{h(P)-b_S}\bigg)^{r_4}\log(B)^{\log(C_4)}+\;\tau_4}\,. \\[5pt] 
\end{split} 
\end{equation}
Hence, after renaming the constants above, we see that there exist positive constants $a_{1}(S,P,\delta)$, $a_{2}(S,P,\delta)$, $b_1(S,\delta)$, $b_{2}(S,\delta)$ and $B_S:=e^{b_S}$ such that \vspace{.15cm}   
\begin{equation}\label{key}
\;a_1(\log B)^{b_1}+o\big((\log B)^{b_1}\big)\leq\#\{f\in M_S\,:\, H(f(P))\leq B\}\leq a_2(\log B)^{b_2}+o\big((\log B)^{b_2}\big)\; \vspace{.15cm}
\end{equation} 
holds for all $P\in\mathbb{P}^N(\overline{\mathbb{Q}})$ with $H(P)\geq B_S$. Moreover, $b_{1}$ and $b_{2}$ depend only on the set $S$ and $\delta$, and $a_{1}$ and $a_{2}$ (and the lower order terms) depend on $S$, $\delta$ and $P$. Specifically, (\ref{explicit1}), (\ref{explicit2}) and (\ref{explicit3}) together imply 
\begin{equation}\label{explicit4} 
\begin{split} 
&a_1=\frac{1}{(\beta_1-1)\,g_{\textbf{m}}'(\beta_1)\,\log\big(B_S\,H(P)\big)^{\log(\beta_1^{-u})}},\qquad b_1=\log(\beta_1^{-u}),\\[10pt] 
&a_2=\frac{1}{\alpha_1^2\,(\alpha_1-1)\,g_{\textbf{n}}'(\alpha_1)\,\log\Big(\frac{H(P)}{B_S}\Big)^{\log(\alpha_1^{-u})}}, \qquad\;\; b_2=\log(\alpha_1^{-u}). 
\end{split} 
\end{equation} 
Moreover, since roots of polynomials can be approximated to any accuracy effectively, $b_1$ and $b_2$ can be computed effectively (also integers as in Lemma \ref{lem:approx} can be produced effectively for all $\delta$). Therefore, to complete the proof of Theorem \ref{thm:count}, we need only show that the difference $b_2-b_1>0$ can be made arbitrarily small (by letting $\delta$ go to zero); see \eqref{smallF} below. Then we set $b=b_1$ and $b_2=b_1+\epsilon$ to deduce the claim in Theorem \ref{thm:count}. 

To do this, we use the Mean Value Theorem applied to the functions $f(x)=-g_{\textbf{m}}(x)$ and $\mathfrak{h}(x)=u\log(x)$ on the intervals $[\alpha_1,\beta_1]$. With this in mind, we begin with a few estimates, all of which follow easily from part (1) of Lemma \ref{lem:approx}:  
\begin{equation}\label{smallA}
\frac{2\delta}{c_1}<\frac{2\delta u}{n_1}<\frac{2\delta}{c_1-\delta},\qquad 1<\frac{m_1}{n_1}<\frac{c_1+\delta}{c_1-\delta},\qquad \frac{u}{m_1}<\frac{1}{c_1}.
\end{equation} 
To simplify the expressions that follow, let $\alpha=\alpha_1$ and $\beta=\beta_1$. Then since $n_1\leq n_i$ and $0<\alpha<1$, we see that $1=\sum_{i=1}^s\alpha^{n_i}\leq s\alpha^{n_1}$. Therefore, 
\begin{equation}\label{smallB}
\Big(\frac{1}{s}\Big)^{\frac{1}{n_1}}\leq\alpha. 
\end{equation}
In particular, \eqref{smallA} and \eqref{smallB} together imply the following lower bound on the derivative:
\begin{equation}\label{smallC}
\;f'(\alpha)=m_s\alpha^{m_s-1}+\dots+m_1\alpha^{m_1-1}\geq m_1\alpha^{m_1-1}\geq m_1\alpha^{m_1}\geq m_1 \Big(\frac{1}{s}\Big)^{\frac{m_1}{n_1}}\geq m_1\Big(\frac{1}{s}\Big)^{\frac{c_1+\delta}{c_1-\delta}}. 
\end{equation}                      
Similarly, \eqref{smallA} and \eqref{smallB} together imply that:
\begin{equation*}
\begin{split} 
f(\alpha)&=\alpha^{(\frac{m_s}{u}-\frac{n_s}{u})u}\cdot\alpha^{n_s}+\dots+ \alpha^{(\frac{m_1}{u}-\frac{n_1}{u})u}\cdot\alpha^{n_1}-1\\[3pt]
&\geq \alpha^{2\delta u}\cdot\alpha^{n_s}+\dots+\alpha^{2\delta u}\cdot\alpha^{n_1}-1\\[3pt]
&=\alpha^{2\delta u}(\alpha^{n_s}+\dots+\alpha^{n_1})-1\\[3pt]
&=\alpha^{2\delta u}-1\geq \Big(\frac{1}{s}\Big)^{\frac{2\delta u}{n_1}}-1\geq \Big(\frac{1}{s}\Big)^{\frac{2\delta}{c_1-\delta}}-1.  
\end{split} 
\end{equation*} 
Here, we use also that $0\leq \frac{m_i}{u}-\frac{n_i}{u}\leq 2\delta$ by construction; see Lemma \ref{lem:approx} part (1). In particular, we deduce the following key upper bound:  
\begin{equation}\label{smallD}
-f(\alpha)\leq 1-\Big(\frac{1}{s}\Big)^{\frac{2\delta}{c_1-\delta}}. 
\end{equation}
We are now ready to apply the Mean Value Theorem to $f(x)$ on $[\alpha,\beta]$. Specifically, 
\begin{equation*}
 m_1\Big(\frac{1}{s}\Big)^{\frac{c_1+\delta}{c_1-\delta}}\leq f'(\alpha)=\min_{\alpha\leq x\leq \beta}f'(x)\leq \frac{f(\beta)-f(\alpha)}{\beta-\alpha}=\frac{-f(\alpha)}{\beta-\alpha}\leq\frac{1-(\frac{1}{s})^{\frac{2\delta}{c_1-\delta}}}{\beta-\alpha}
\end{equation*}
follows from \eqref{smallC}, \eqref{smallD}, and the Mean Value Theorem. Therefore, we have the estimate:  
\begin{equation}\label{smallE}
0\leq \beta-\alpha\leq \frac{1-(\frac{1}{s})^{\frac{2\delta}{c_1-\delta}}}{m_1(\frac{1}{s})^{\frac{c_1+\delta}{c_1-\delta}}}.
\end{equation} 
Likewise, the Mean Value Theorem for $\mathfrak{h}(x)=u\log(x)$ on $[\alpha,\beta]$, \eqref{smallB}, and the fact that $n_1>0$ together yield 
\begin{equation}\label{MVTLog}
0\leq\frac{\mathfrak{h}(\beta)-\mathfrak{h}(\alpha)}{\beta-\alpha}\leq\max_{\alpha\leq x\leq\beta}\mathfrak{h}'(x)=\mathfrak{h}'(\alpha)=u\alpha^{-1}\leq su.
\end{equation} 
Hence, after combining \eqref{explicit4},\eqref{smallA}, \eqref{smallE} and \eqref{MVTLog}, we deduce that
\begin{equation}\label{smallF}
\begin{split}
\Scale[1.05]{
0\leq b_2-b_1=\mathfrak{h}(\beta)-\mathfrak{h}(\alpha)\leq su\cdot \frac{1-(\frac{1}{s})^{\frac{2\delta}{c_1-\delta}}}{m_1(\frac{1}{s})^{\frac{c_1+\delta}{c_1-\delta}}}= s\cdot\frac{u}{m_1}\cdot \frac{1-(\frac{1}{s})^{\frac{2\delta}{c_1-\delta}}}{(\frac{1}{s})^{\frac{c_1+\delta}{c_1-\delta}}}\leq\frac{s}{c_1}\cdot\frac{1-(\frac{1}{s})^{\frac{2\delta}{c_1-\delta}}}{(\frac{1}{s})^{\frac{c_1+\delta}{c_1-\delta}}}} 
\end{split} 
\end{equation} 
However, the upper bound in \eqref{smallF} goes to zero as $\delta$ goes to zero. Therefore, the exponents $b_1$ and $b_2$ in \eqref{key} can be made arbitrarily close.             
\end{proof} 
\begin{remark} If $S$ has only two maps ($s=2$), then the trinomials $g_{\textbf{n}}(z)=1-z^{n_1}-z^{n_2}$ and $g_{\textbf{m}}(z)=1-z^{m_1}-z^{m_2}$ must have non-zero discriminant (in fact, here we need only that $n_1\neq n_2$ and $m_1\neq m_2$, making no assumptions on gcd's); this fact follows easily from the discriminant formula in \cite[Theorem 4]{trinomial}. In particular, $r_3$ and $r_4$ from (\ref{wt:bd6}) and (\ref{wt:bd9}) must be zero. Hence, we obtain simpler bounds for the number of functions of bounded degree (hence, also for the number of points of bounded height in orbits). For instance,
\[\kappa_5\,C_1^B-\kappa_6\,C_{2}^B-\kappa_4\leq\#\{f\in M_S\,:\, \log\deg(f)\leq B\}\leq\tau_5\,C_{3}^{B}+\tau_6\,C_{4}^{B}+\tau_4\]
holds for all $B$ sufficiently large.    
\end{remark} 
\begin{example}{\label{eg:twomaps}} In particular, if $S=\{\phi_1, \phi_2\}$ with $\deg(\phi_1)=2$ and $\deg(\phi_2)=3$, then we use the crude approximations
\[\frac{79}{115}<\log(2)<\frac{80}{115}\qquad\text{and}\qquad \frac{126}{115}<\log(3)<\frac{127}{115}\]
as inputs to Lemma \ref{lem:approx} to obtain some explicit bounds for Theorem \ref{thm:count}. Specifically,  
\begin{align*} 
\Scale[.83]{\Bigg(\frac{1.46457}{\log(B_S\,H(P))^{0.78437}}\Bigg)\log(B)^{0.78437}+o\big(\log(B)^{0.78437}\big)}&\leq\#\{f\in M_S\,:\, H(f(P))\leq B\}\\
&\leq \Scale[.85]{\Bigg(\frac{1.48541}{\log\big(\frac{H(P)}{B_S}\big)^{0.79232}}\Bigg)\log(B)^{0.79232} + o\big(\log(B)^{0.79232}\big)}
\end{align*} 
holds for all $P\in\mathbb{P}^N(\overline{\mathbb{Q}})$ of sufficiently large height; here we use (\ref{explicit3}), (\ref{wt:bd9}) and {\tt{Magma}} \cite{Magma} to approximate roots of polynomials.   
\end{example} 
Lastly, we can use the bounds in Theorem \ref{thm:count} on the number of functions in free monoids satisfying a bounded height relation to give an upper bound on the number of points of bounded height in arbitrary monoid orbits. 
\begin{proof}[(Proof of Corollary \ref{cor:count})] Let $S=\{\phi_1,\dots,\phi_s\}$ be a set of endomorphisms all of degree at least $2$. If $s=1$ (i.e., $S=\{\phi\}$ contains just one map), then one may use the canonical height \cite[\S3.4]{SilvDyn} associated to $\phi$ to reach the desired bound. Namely, the fact that $|\hat{h}_\phi-h|\leq c_\phi$ and that $\hat{h}(\phi^n(P))=d_\phi^n\,\hat{h}_\phi(P)$ together imply that 
\[
\Scale[.95]{\Bigg\{n\,:\, n\leq \log_{d_\phi}\bigg(\frac{\log(B)-c_\phi}{\hat{h}_\phi(P)}\bigg)\Bigg\}\subseteq\{Q\in\Orb_\phi(P): H(Q)\leq B\}\subseteq\Bigg\{n\,:\, n\leq \log_{d_\phi}\bigg(\frac{\log(B)+c_\phi}{\hat{h}_\phi(P)}\bigg)\Bigg\} } 
\] 
for all non-preperiodic $P$. On the other hand, if $P$ is preperiodic, then $\Orb_\phi(P)$ is finite. In particular, the number of points with (multiplicative) height at most $B$ is certainly bounded above by a constant times $\log\log(B)\ll\log(B)$ as claimed; hence, $b=1$ in this case.  

Now assume that $s\geq2$, and let $F_S$ be the free monoid generated by $S$ under concatenation. Then, given a word $w=\theta_1\dots\theta_n\in F_S$, we can define an action of $w$ on $\mathbb{P}^N(\overline{\mathbb{Q}})$ via $w\cdot P= \theta_1\circ\dots\circ\theta_n(P)$. Likewise, we define the degree of $w$ to be $\deg(\theta_1\circ\dots\circ\theta_n)$. In particular, (by counting words of bounded degree) it is straightforward to see that we can replace $M_S$ with $F_S$ in the proof of Theorem \ref{thm:count} and deduce that  \vspace{.075cm}
\[a_1\log(B)^{b_1}+o\big(\log(B)^{b_1}\big)\leq \#\{w\in F_S\,:\, H(w\cdot P)\leq B\}\leq a_2\log(B)^{b_2}+o\big(\log(B)^{b_2}\big) \]  
for some constants $a_1(P),a_2(P),$ $b_1$ and $b_2$ (whenever $H(P)> B_S,$ as before); here we can choose $\delta=0.1$, small enough to separate logs of distinct integers (see Remark \ref{rem:extra}). In particular, since every point $Q\in\Orb_S(P)$ is of the form $Q=w\cdot P$ for some $w\in F_S$, we have that 
\[\#\{Q\in\Orb_S(P)\,:\, H(Q)\leq B\}\leq\#\{w\in F_S\,:\, H(w\cdot P)\leq B\}\leq a_2\log(B)^{b_2}+o\big(\log(B)^{b_2}\big).\]   
Therefore, the number of points in $\Orb_S(P)$ with height at most $B$ is $\ll\log(B)^{b_2}$. Concretely, by choosing $\delta={0.1}$ we get the crude bound $b_2\leq\frac{1}{(c_1-0.1)}\log(s)$ from \eqref{explicit4} and \eqref{smallB}.         
\end{proof} 
\begin{remark} It is likely that the statement and proof of Theorem \ref{thm:count} hold for height controlled sets of simultaneously polarizable maps on \emph{any} projective variety. The main arithmetic ingredient, Tate's telescoping Lemma \ref{lem:tate}, works perfectly well with this level of generality; see \cite[Lemma 2.1]{stochastic}. Moreover, the other components of the proof (generating functions and diophantine approximation of degrees) don't depend on $\mathbb{P}^N$.    
\end{remark}  
\section{Monoid orbits in dimension one}  
In this section, we prove Theorems \ref{thm:rational} and \ref{thm:freemonoid} on monoid orbits over $\mathbb{P}^1$. To do this, we first show that the relevant sets of maps generate free monoids under composition. For critically separate and simple sets of rational maps, this follows directly from the main results of \cite{Pakovich}.  
\begin{theorem}\label{thm:freerational} Let $S=\{\phi_1,\dots,\phi_s\}$ be a set of rational maps on $\mathbb{P}^1(\mathbb{C})$ all of degree at least four. If $S$ is critically separate and critically simple, then $M_S$ is a free. 
\end{theorem}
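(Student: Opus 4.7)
The plan is to argue by contradiction. Suppose $M_S$ is not free, so there exist distinct words $w_1 \neq w_2$ in the free monoid $F_S$ on $S$ with the same image $h$ in $M_S$ under the evaluation map $\pi\colon F_S \to M_S$. Among all such pairs, choose one minimizing $\deg h$. Writing the leftmost letters as $\theta \in S$ for $w_1$ and $\psi \in S$ for $w_2$, and setting $A = \pi(w_1')$ and $B = \pi(w_2')$ where $w_1', w_2'$ are the tails (with $A$ or $B$ equal to the identity if the corresponding tail is empty), the equality $\pi(w_1) = \pi(w_2)$ becomes the functional identity $\theta \circ A = \psi \circ B$ of rational functions on $\mathbb{P}^1$.

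The next step is to extract a non-constant rational parameterization of a separated-variable curve. Rewriting as $\theta(A(z)) = \psi(B(z))$ shows that $(A,B)\colon \mathbb{P}^1 \to \mathbb{P}^1 \times \mathbb{P}^1$ takes values in the curve
$$E_{\theta,\psi} \;:=\; \{(x,y) \in \mathbb{P}^1 \times \mathbb{P}^1 : \theta(x) = \psi(y)\}.$$
If $\theta \neq \psi$, then $(A,B)$ is automatically non-constant: $A$ and $B$ cannot both be the identity (otherwise $\theta = h = \psi$), and any non-identity element of $M_S$ is a non-constant rational function. If $\theta = \psi$, minimality of $\deg h$ forces $A \neq B$: were $A = B$, the pair $(w_1', w_2')$ would be a strictly smaller counterexample (with $w_1' \neq w_2'$ inherited from $w_1 \neq w_2$, and $\pi(w_1') = \pi(w_2')$ from $A=B$), contradicting the minimality of $\deg h = \deg\theta \cdot \deg A$. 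Hence in both cases, the image of $(A,B)$ is an irreducible component of $E_{\theta,\psi}$ which, in the case $\theta = \psi$, differs from the diagonal of $E_{\theta,\theta}$.

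To close the argument, I will invoke the genus calculations of Pakovich \cite{Pakovich}: under the critically separate, critically simple, and degree-at-least-four hypotheses, every irreducible component of $E_{\theta,\psi}$ for $\theta \neq \psi$ in $S$, and every non-diagonal irreducible component of $E_{\theta,\theta}$ for $\theta \in S$, has geometric genus at least two. Lifting the non-constant map $(A,B)\colon \mathbb{P}^1 \to E_{\theta,\psi}$ through the normalization of the target component then yields a non-constant morphism from $\mathbb{P}^1$ to a smooth projective curve of genus $\geq 2$, contradicting Picard's theorem (equivalently, Riemann--Hurwitz). The main obstacle will be confirming the applicability of Pakovich's genus bound in both the off-diagonal and self-fiber-product settings under precisely our hypotheses; once those geometric inputs are secured, the short combinatorial reduction above closes out the proof.
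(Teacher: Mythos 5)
Your argument is correct, and it takes a genuinely (if mildly) different route from the paper. The paper's proof of Theorem \ref{thm:freerational} peels off the outermost letters one at a time: from $\theta_1\circ f_2=\tau_1\circ g_2$ it quotes \cite[Theorem 1.1]{Pakovich} (critical separation forces $\theta_1=\tau_1$) and \cite[Theorem 1.3]{Pakovich} (critical simplicity and degree at least four give the left cancellation $f_2=g_2$), iterates, and finally equates degrees to force the two words to have the same length. You instead run a minimal-counterexample descent and, in place of Pakovich's functional-equation theorems, use only his irreducibility and genus computations for the curves $\phi_j(x)=\phi_k(y)$ ($j\neq k$) and $\frac{\phi_i(x)-\phi_i(y)}{x-y}=0$, together with the fact that $\mathbb{P}^1$ admits no non-constant map to (the normalization of) a curve of genus at least two. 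The geometric inputs you flag as the remaining obstacle are exactly the facts the paper records in Proposition \ref{prop:rational}: under these hypotheses $C_{j,k}$ is irreducible of genus $(\deg\phi_j-1)(\deg\phi_k-1)\geq 9$ and $C_i$ is irreducible of genus $(\deg\phi_i-2)^2\geq 4$, so both of your cases ($\theta\neq\psi$; and $\theta=\psi$ with $A\neq B$ forced by minimality, so that the image of $(A,B)$ lies off the diagonal) end in the desired contradiction. What each approach buys: the paper's version is shorter because it black-boxes Pakovich's Theorems 1.1 and 1.3, while yours is more self-contained in its inputs, reusing for freeness the very genus facts the paper needs anyway for the passage from functions to points, at the cost of re-deriving special cases of those two theorems; note also that in your word-based setup the equality of word lengths is automatic, whereas the paper checks it separately by comparing degrees.
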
  
\begin{proof} Suppose that $f_1=\theta_1\circ\dots\circ\theta_n=\tau_1\circ\dots\circ\tau_m=g_1$ for some $\theta_i,\tau_j\in S$. Without loss of generality, we may assume that $n\geq m$. Clearly if $n=m=1$, then $\theta_1=\tau_1$ and there is nothing to prove. Therefore, we may assume that $n\geq m>1$. Write $f_2=\theta_2\circ\dots\circ\theta_n$ and $g_2=\tau_2\circ\dots\circ\tau_m$ so that $\theta_1(f_2)=\tau_1(g_2)$. However, since $f_2$ and $g_2$ are non-constant and $S$ is critically separate, \cite[Theorem 1.1]{Pakovich} implies that $\theta_1=\tau_1$. Likewise since $S$ is critically simple and $\deg(\theta_1)\geq4$, we see that $f_2=g_2$ by \cite[Theorem 1.3]{Pakovich}. Repeating the argument above now for $f_2$ and $g_2$ (instead of $f_1$ and $f_2$), we see that $\theta_2=\tau_2$ and $\theta_3\circ\dots\circ\theta_n=\tau_3\circ\dots\circ\tau_m$. We can clearly keep going to deduce that $\theta_i=\tau_i$ for all $1\leq i\leq m$. Finally, by equating degrees given by the original relation $f_1=g_1$, we see that $\deg(\theta_{n-m})\cdots\deg(\theta_n)=1$, a contraction unless $n=m$. This completes the proof that $M_S$ is free.          
\end{proof} 
Next we show that polynomial sets with multiplicatively independent degrees and leading coefficients generate free monoids under composition. This is perhaps known to the experts. However, without a reference, we include a proof for completeness. Our argument is inspired by the proof of \cite[Lemma 3.2]{Zieve}.  
\begin{theorem}{\label{thm:justfree}} Let $S=\{\phi_1,\dots,\phi_s\}$ be a set of polynomials defined over a field $K$ of characteristic zero, and let $a_ix^{d_i}$ denote the leading term of $\phi_i$. If $\{d_1,\dots, d_s\}$ is a multiplicatively independent set in $\mathbb{Z}$ and $\{a_1,\dots, a_s\}$ is a multiplicatively independent set in $K^{*}$, then $M_S$ is a free monoid.      
\end{theorem}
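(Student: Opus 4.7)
The plan is a strong induction on the length $n$ of the composition. First, I would compare degrees: in any putative relation $\theta_1 \circ \cdots \circ \theta_n = \tau_1 \circ \cdots \circ \tau_m$ with $\theta_i,\tau_j \in S$, equating degrees gives $\prod_k d_{\theta_k} = \prod_k d_{\tau_k}$, and since $\{d_1,\dots,d_s\}$ is multiplicatively independent in $\mathbb{Z}$ (in particular the $d_i$ are distinct, and the multiplicative submonoid they generate is free abelian of rank $s$), this common product admits a unique factorization. Hence $n = m$ and the multi-sets $\{\theta_k\}$ and $\{\tau_k\}$ coincide. The problem then reduces to showing that two distinct orderings of a fixed multi-set cannot yield the same composition.

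The inductive step rests on computing leading coefficients. Iterating $\mathrm{lc}(\phi \circ \psi) = \mathrm{lc}(\phi)\cdot \mathrm{lc}(\psi)^{\deg\phi}$ gives
\[
\mathrm{lc}(\phi_{i_1} \circ \cdots \circ \phi_{i_n}) \;=\; \prod_{k=1}^n a_{i_k}^{D_k}, \qquad D_k = d_{i_1}d_{i_2}\cdots d_{i_{k-1}}.
\]
Since $\{a_1,\dots,a_s\}$ is multiplicatively independent in $K^*$, the subgroup $\langle a_1,\dots,a_s\rangle \subset K^*$ is free abelian of rank $s$, so the exponent vector $E = (E_i)_{i=1}^s \in \mathbb{Z}_{\geq 0}^s$ with $E_i = \sum_{k:\, i_k = i} D_k$ is an invariant of $\mathrm{lc}(f)$. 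Writing $f = \phi_p \circ F = \phi_q \circ G$ with $F, G \in M_S$ of length $n-1$, the two outer decompositions translate into the congruences $E \equiv e_p \pmod{d_p}$ and $E \equiv e_q \pmod{d_q}$ in this free abelian group, where $e_\ell \in \mathbb{Z}^s$ is the standard basis vector.

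The aim is to conclude that $p = q$, at which point left-cancellation of the non-constant polynomial $\phi_p$ (surjective on $\bar K$) reduces the identity to $F = G$ and the induction closes. Suppose for contradiction that $p \neq q$. Comparing the two congruences in the $p$-th and $q$-th coordinates forces $\gcd(d_p,d_q) = 1$, and moreover that each of $\phi_p, \phi_q$ appears with multiplicity at least $2$ in the shared multi-set: if $\phi_q$ appeared only once, then $\phi_q$ would not occur in the multi-set of $G$, so the $a_q$-exponent of $\mathrm{lc}(G)$ would be zero, yet the $q$-th coordinate of $a_p\,\mathrm{lc}(F)^{d_p} = a_q\,\mathrm{lc}(G)^{d_q}$ collapses to $d_p \cdot (\text{positive integer}) = 1$, which is impossible for $d_p \geq 2$. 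The remaining case is coprime degrees with $m_p, m_q \geq 2$; here the inductive hypothesis applied to $F$ and $G$ characterizes precisely which exponent vectors are realized by orderings of the respective inner multi-sets $M \setminus \{\phi_p\}$ and $M \setminus \{\phi_q\}$, and the simultaneous realizability of a single algebraic solution across two \emph{distinct} inner multi-sets turns out to be inconsistent, producing the final contradiction.

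The main obstacle is precisely this last step: closing the case $p \neq q$ when $\gcd(d_p,d_q) = 1$ and both multiplicities are $\geq 2$. The mod-$d$ congruences alone are not decisive, and the argument requires the inductive characterization of realizable exponent vectors combined with the observation that an abstract decomposition $\mathrm{lc}(f) = a_p\,\gamma^{d_p} = a_q\,(\gamma')^{d_q}$ in $\langle a_1,\dots,a_s\rangle$ must actually correspond to the leading coefficients of genuine compositions in $M_S$, not merely to group-theoretic solutions. This is exactly the point at which the hypotheses on multiplicative independence of \emph{both} the degrees and the leading coefficients are used in tandem, mirroring the strategy of \cite[Lemma~3.2]{Zieve}.
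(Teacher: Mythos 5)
Your reduction steps are fine (equal degrees plus multiplicative independence of the $d_i$ forces equal length and equal multi-sets of letters, and the exponent vector of the leading coefficient is well defined because $\langle a_1,\dots,a_s\rangle$ is free abelian), but the proof is not complete: the case you yourself flag as ``the main obstacle'' --- outer letters $\phi_p\neq\phi_q$ with $\gcd(d_p,d_q)=1$ and both letters occurring with multiplicity at least $2$ --- is exactly where the content of the theorem lies, and you only assert that it ``turns out to be inconsistent.'' The congruences $E\equiv e_p \pmod{d_p}$, $E\equiv e_q\pmod{d_q}$ are indeed not decisive, and no mechanism is supplied to rule out a common solution. The paper closes the analogous case by a quantitative, not congruential, argument: it peels off the \emph{innermost} letter, writes $F=F'\circ\theta$ and $G=G'\circ\tau$ with $\theta=ax^d\neq\tau$, and compares the exponent $\deg_a$ of $a$ in the leading coefficient. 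If $\theta$ is innermost, $\deg_a(F)=\deg_a(F')+\deg(F')\geq\deg(F')$ is large; if it is not, one proves the upper bound $\deg_a(G)\leq\frac{d^e-1}{d-1}\cdot\frac{\deg(G')}{d^e}$ (Lemma \ref{lem:a-degree}), and combining the two yields $2\leq\deg(\tau)<\frac{d}{d-1}\leq 2$, a contradiction. Some inequality of this kind (bounding how much $a$-exponent a word can accumulate when $\theta$ is not in the distinguished position) is what your sketch is missing; without it the argument does not close.

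A secondary but real error: your left-cancellation step is misjustified. Surjectivity of $\phi_p$ on $\overline{K}$ lets you cancel an \emph{inner} map ($F\circ\phi_p=G\circ\phi_p\Rightarrow F=G$), not an outer one; in general $\phi\circ F=\phi\circ G$ does not force $F=G$ (e.g.\ $x^2\circ x=x^2\circ(-x)$). In your setting the step can be repaired: from $a_p\,\mathrm{lc}(F)^{d_p}=a_p\,\mathrm{lc}(G)^{d_p}$ and torsion-freeness of $\langle a_1,\dots,a_s\rangle$ you get $\mathrm{lc}(F)=\mathrm{lc}(G)$ and $\deg F=\deg G$, and then writing $\phi_p(X)-\phi_p(Y)=(X-Y)\Psi(X,Y)$ one checks that $\Psi(F,G)$ has leading coefficient $d_p\,a_p\,\mathrm{lc}(F)^{d_p-1}\neq0$ in characteristic zero, so $F=G$. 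Alternatively, peel letters from the inside as the paper does, where the cancellation used is the legitimate one. So the structure of your induction is salvageable, but as written the decisive case is unproved and the cancellation step needs a correct justification.
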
 
\begin{proof} As the statement of the theorem suggests, it suffices to study the monoid generated by the leading terms in $S$. To make this statement precise, we note the following lemma: 
\begin{lemma}{\label{lem:leadingterms}} Let $S=\{\phi_1,\dots,\phi_s\}$ be a set of polynomials defined over a field $K$, let $a_ix^{d_i}$ denote the leading term of $\phi_i$, and let $S'=\{a_1x^{d_1},\dots,a_sx^{d_s}\}$. If $M_{S'}$ is a free monoid, then $M_S$ is a free monoid.   
\end{lemma}  
\begin{proof} This statement is a simple consequence of the fact that $\mathit{lt}(f\circ g)=\mathit{lt}(f)\circ\mathit{lt}(g)$ for all $f,g\in K[x]$; here $\mathit{lt}(\cdot)$ denotes the leading term of a polynomial. To see this, suppose that $M_{S'}$ is a free monoid and that there is some relation 
\begin{equation}\label{relation1} 
\theta_1\circ\theta_2\circ\dots\circ\theta_n=\tau_1\circ\tau_2\circ\dots\circ\tau_m
\end{equation} 
for some $\theta_i,\tau_j\in S$. Then, in particular, we have an equality of leading terms,
\[\mathit{lt}(\theta_1)\circ\mathit{lt}(\theta_2)\circ\dots\circ\mathit{lt}(\theta_n)=\mathit{lt}(\tau_1)\circ\mathit{lt}(\tau_2)\circ\dots\circ\mathit{lt}(\tau_m).\]
But this is a relation in $M_{S'}$, which is free on the letters in $S'$. Therefore, $n=m$ and $\mathit{lt}(\theta_i)=\mathit{lt}(\tau_i)$. However, again since $M_{S'}$ is free, $\mathit{lt}(\theta_i)=\mathit{lt}(\tau_i)$ implies that $\theta_i=\tau_i$. Hence the relation in \eqref{relation1} is a trivial one.        
\end{proof} 
Now back to the proof of Theorem \ref{thm:justfree}. In particular, in light of Lemma \ref{lem:leadingterms}, we may assume that $S=\{\phi_1,\dots\phi_s\}$ is a set of monomials with $\phi_i=a_ix^{d_i}$, that $\{d_1,\dots d_s\}$ is a multiplicatively independent set in $\mathbb{Z}$, and that $\{a_1,\dots a_s\}$ is a multiplicatively independent set in $K^{*}$. Now, given $F=\theta_1\circ\dots\circ\theta_n\in M_S$ and $\phi\in S$, we define $e_\phi(F)=\#\{j\,|\,\theta_j=\phi\}$ to be the number of $\phi$'s appearing in the string defining $F$ (strictly speaking this is an abuse of notation; $e_\phi$ is a function on words). In particular, if there is a relation $F=G$ for some $F,G\in M_S$, then we see that 
\begin{equation}\label{eq:degrees}
d_1^{\,e_{\phi_1}(F)}\cdots d_s^{\,e_{\phi_s}(F)}=\deg(F)=\deg(G)=d_1^{\,e_{\phi_1}(G)}\cdots d_s^{\,e_{\phi_s}(G)}.
\end{equation} 
However, the $d_i$'s are multiplicatively independent by assumption, so that $e_{\phi_i}(F)=e_{\phi_i}(G)$ for all $i$. In particular, the strings defining $F$ and $G$ have the same length (i.e., the total number of letters from $S$) equal to $n=\sum e_{\phi_i}(F)$. Hence, 
\begin{equation}\label{relation2}
\qquad F=\theta_1\circ\dots\circ\theta_n=\tau_1\circ\dots\circ \tau_n=G\qquad\text{for some}\; \theta_i, \tau_i\in S.
\end{equation} 
Moreover, $e_{\phi_i}(F)=e_{\phi_i}(G)$ for all $i$. From here, we will show that $\theta_i=\tau_i$ by induction on the length $n$. The $n=1$ case is clear. For $n>1$, if \eqref{relation2} holds then 
\begin{equation}\label{relation3}
F'\circ\theta=F=G=G'\circ\tau
\end{equation} 
for some $\theta,\tau\in S$ and some monomials $F'$ and $G'$ given by strings of length $n-1$ of elements of $S$. We proceed in cases. \\[3pt]
\textbf{Case(1):} Suppose that $\theta=\tau$, and write $\theta=ax^d$, $F'=a_{F'}\,x^{\deg(F')}$ and $G'=a_{G'}\,x^{\deg(F')}$. Here we use that $\deg(F)=\deg(G)$ and $\theta=\tau$, so that $\deg(F')=\deg(G')$. Therefore, \eqref{relation3} becomes
\[a_{F'}\,a^{\deg(F')}\,x^{d\deg(F')}=a_{G'}\,a^{\deg(F')}\,x^{d\deg(F')},\] 
and we deduce that $a_{F'}=a_{G'}$. However, then $F'=a_{F'}\,x^{\deg(F')}=a_{G'}\,x^{\deg(F')}=G'$ and $F',G'\in M_S$ are polynomials obtained by composing strings of elements of $S$ of length $n-1$. In particular, we may deduce that $\theta_i=\tau_i$ for all $i<n$ by induction. On the other hand, $\theta_n=\theta=\tau=\tau_n$ by construction. Therefore, $\theta_i=\tau_i$ for all $i\leq n$ as claimed.  \\[3pt] 
\textbf{Case(2):} Suppose that $\theta\neq\tau$. We fix some notation. Given a string $\theta_1\dots \theta_m$ of elements of $S$, write 
\[f=\theta_1\circ\dots\circ\theta_m=a_f\,x^{\deg(f)}=(a_1^{n_1}\cdots a_s^{n_s}) \,x^{\deg(f)}.\]
Then define the $a_i$-degree of $f$ (or more accurately, the $a_i$-degree of the corresponding string) to be $\deg_{a_i}(f)=n_i$.  Note that this construction is well-defined since the leading coefficient $a_f$ is in the (multiplicative) semigroup generated by the $a_i$'s and the $a_i$'s are multiplicatively independent by assumption. Now write $\theta=ax^d$. Then we will show that $\deg_a(F)\neq\deg_a(G)$, a contradiction, using \eqref{relation3}, the fact that $\theta\neq\tau$, and the following elementary observations about $a$-degrees: 
\begin{lemma}\label{lem:a-degree} Let $S$ be as in Theorem \ref{thm:justfree} and let $\theta=ax^d\in S$. Then the following statements hold: 
\begin{enumerate} 
\item[\textup{(1)}] If $f_1,f_2,g\in M_S$, $\deg_a(f_1)\leq\deg_a(f_2)$, and $\deg(f_1)\leq\deg(f_2)$, then $\deg_a(f_1\circ g)\leq\deg_a(f_2\circ g)$. \\[2pt] 
\item[\textup{(2)}] Let $f\in M_S$, and suppose that $e_\theta(f)=e\geq1$. Then $\deg_a(f)\leq \frac {d^e-1}{d-1}\cdot\frac{\deg(f)}{d^e}$. \vspace{.1cm} 
\end{enumerate}  	
\end{lemma} 

We grant Lemma \ref{lem:a-degree} for now and return to the proof later. To see that $\deg_a(F)\neq\deg_a(G)$ in Case 2, let $e=e_\theta(F)=e_{\theta}(G)$ be the number of $\theta$'s appearing in the strings defining $F$ and $G$. Then, writing $F=F'\circ\theta$ as in \eqref{relation3}, we see that 
\begin{equation}\label{relation4}
\deg_a(F)=\deg_a(F')+\deg(F')\deg_a(\theta)=\deg_a(F')+\deg(F')\geq\deg(F').
\end{equation} 
On the other hand, Lemma \ref{lem:a-degree} part (2) applied to $f=G'$ implies that   
\begin{equation}\label{relation5}
\deg_a(G)=\deg_a(G')+\deg(G')\deg_a(\tau)=\deg_a(G')\leq\frac{d^e-1}{d-1}\cdot\frac{\deg(G')}{d^e}. 
\end{equation}   
Here we use that $G=G'\circ\tau$ and that $\deg_a(\tau)=0$, since $\theta\neq\tau$ and the leading coefficients of the elements in $S$ are multiplicatively independent. Therefore, if $\deg_a(F)=\deg_a(G)$, then \eqref{relation3}, \eqref{relation4}, \eqref{relation5} together imply that
\begin{equation}\label{relation6} 
\begin{split} 
\deg(\tau)\deg(F)&=\deg(\tau)d\deg(F') \\[5pt] 
&\leq\deg(\tau)d \deg_a(F) \\[5pt]
&=\deg(\tau)d \deg_a(G) \\[5pt]
&\leq \frac{d^e-1}{d-1}\cdot\frac{\deg(\tau)\deg(G')}{d^{e-1}} \\[5pt]
&\leq \frac{d^e-1}{d-1}\cdot\frac{\deg(G)}{d^{e-1}} \\[5pt]
&< \frac{d^e}{d-1}\cdot\frac{\deg(G)}{d^{e-1}} \\[5pt] 
&=\frac{d}{d-1}\deg(G).
\end{split} 
\end{equation} 
However, $F=G$ so that $\deg(F)=\deg(G)$. In particular, \eqref{relation6} implies that 
\[2\leq\deg(\tau)<\frac{d}{d-1}\leq2,\]
a contradiction. Therefore, $\deg_a(F)\neq\deg_a(G)$ and Case 2 is incompatible with \eqref{relation3}. Therefore, any relation in $M_S$ must be of the form in Case 1. However, since we have settled Theorem \ref{thm:justfree} in this case by induction, $M_S$ is a free monoid as claimed.          
\end{proof}         

We now include a proof of Lemma \ref{lem:a-degree} regarding $a$-degreess. 
\begin{proof}[(Lemma \ref{lem:a-degree})] The first statement is a simple consequence of the definition of $a$-degrees. Suppose that $f_1,f_2,g\in M_S$, that $\deg_a(f_1)\leq\deg_a(f_2)$, and that $\deg(f_1)\leq\deg(f_2)$. Then \vspace{.05cm}  
\[\deg_a(f_1\circ g)=\deg_a(f_1)+\deg_a(g)\cdot\deg(f_1)\leq\deg_a(f_2)+\deg_a(g)\cdot\deg(f_2)=\deg_a(f_2\circ g) \vspace{.05cm}  \]
as claimed. For the second statement, let $f\in M_S$ and suppose that $e_\theta(f)=e\geq1$. Then, we may write 
\begin{equation}\label{form}
f=g_{t+1}\circ \theta^{r_t}\circ g_{t}\circ\dots \circ g_2\circ \theta^{r_1}\circ g_1
\end{equation} 
for some $g_i\in M_S$ with $\deg_a(g_i)=0$, some $t\geq 1$, and some $r_i\geq0$ with $\sum_{i=1}^{t}r_i=e$. We will show by induction on $t$ that \vspace{.05cm}  
\begin{equation}\label{bd:a-degree}
\deg_a(f)\leq \deg_a(g_{e+1}\circ g_e\circ\dots\circ g_1\circ\theta^e), \vspace{.05cm}  
\end{equation} 
from which statement (2) of the Lemma easily follows. If $t=1$, then  \vspace{.05cm}  
\[\deg_a(g_2\circ \theta^{r}\circ g_1)=\deg(g_2)\deg_a(\theta^r)\leq \deg(g_2)\deg(g_1)\deg_a(\theta^r)\leq \deg_a(g_2\circ g_1\circ\theta^r). \vspace{.05cm}  \] 
Here we use that $\deg_a(g_i)=0$. On the other hand, assume that $t>1$ and that \eqref{bd:a-degree} is true for polynomials of the form in \eqref{form} with $t-1$ appearances of substrings of the form $\theta^{r_i}$. Then given $f$ as in \eqref{form}, let $f_1=g_{t+1}\circ \theta^{r_t}\circ g_{t}\circ\theta^{r_{t-1}}\circ\dots \circ g_2$, let $f_2=g_{t+1}\circ\dots \circ g_2\circ \theta^{r}$ where $r=\sum_{i=2}^{t}r_i$, and let $g=\theta^{r_1}\circ g_1$. Then $f=f_1\circ g$ and $\deg(f_1)=\deg(f_2)$. Hence, part 1 of Lemma \ref{lem:a-degree} and the induction hypothesis together imply that  \vspace{.05cm}  
\begin{equation}\label{form2}
\deg_a(f)=\deg_a(f_1\circ g)\leq \deg(f_2\circ g)=\deg_a((g_{t+1}\circ\dots\circ g_2)\circ\theta^e\circ g_1). \vspace{.05cm}  
\end{equation}
On the other hand letting $g'=g_{t+1}\circ\dots\circ g_2$, we see that the $t=1$ case above applied to $g'\circ\theta^e\circ g_1$ in place of $f$ implies that \vspace{.05cm}  
\begin{equation}\label{form3}
\deg_a((g_{t+1}\circ\dots\circ g_2)\circ\theta^e\circ g_1)\leq \deg_a((g_{t+1}\circ\dots \circ g_1)\circ\theta^e). \vspace{.05cm}    
\end{equation}     
Therefore after combining \eqref{form2} and \eqref{form3}, we establish \eqref{bd:a-degree} as claimed. Finally, the bound in part 2 of Lemma \ref{lem:a-degree} follows easily from \eqref{bd:a-degree}, the fact that \vspace{.05cm}   
\[\deg_a(\theta^e)=(d^{e-1}+\dots+d+1)=(d^e-1)/(d-1), \vspace{.05cm}  \] 
and that $\deg(g_{t+1}\circ\dots\circ g_1)=\deg(f)/d^e$.   
\end{proof} 
We are nearly ready to prove Theorems \ref{thm:rational} and \ref{thm:freemonoid}, versions of Conjecture \ref{conjecture} in dimension one, for some fairly general sets of maps. However to complete the main remaining step, (i.e., to pass from counting functions to counting points), we need to show that $f(P)=g(P)$ occurs rarely for $f,g\in M_S$ and $P$ of large enough height. This is largely achieved by ensuring that the rational (or integral) points on the curves 
\begin{equation}\label{curves}
\qquad C_i: \frac{\phi_i(x)-\phi_i(y)}{x-y}=0\qquad\text{and}\qquad C_{j,k}: \phi_j(x)=\phi_k(y)\;\;\;\text{for}\; j\neq k
\end{equation}  
are finite. For critically separate and simple sets of rational maps this follows from the genus calculations in \cite{Pakovich} and Faltings' theorem: 
\begin{proposition}{\label{prop:rational}} Let $S=\{\phi_1,\dots,\phi_s\}$ be a set of rational maps on $\mathbb{P}^1(\overline{\mathbb{Q}})$ all of degree at least $4$. If $S$ is critically separate and critically simple, then the curves in (\ref{curves}) have at most finitely many rational points over any number field.    
\end{proposition}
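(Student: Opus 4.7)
The plan is to reduce to Faltings' theorem by showing that every irreducible component of each curve in \eqref{curves} has geometric genus at least two. Once this is established, Faltings gives only finitely many $K$-rational points on every such component for any number field $K$, and summing over the (finitely many) components and over the finite index sets for $i$ and $(j,k)$ yields the proposition.

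For the mixed curves $C_{j,k}$ with $j\ne k$, I would invoke the same results of Pakovich used in the proof of Theorem~\ref{thm:freerational}, together with a Riemann--Hurwitz calculation. The critically separate hypothesis $\mathcal{C}_{\phi_j}\cap\mathcal{C}_{\phi_k}=\varnothing$ combined with $\deg(\phi_j),\deg(\phi_k)\geq 4$ should place us squarely in the regime where \cite{Pakovich} gives an explicit lower bound on the genus of each component of the normalization. Intuitively, disjoint critical value sets contribute independent ramification to the projection $\tilde{C}_{j,k}\to\mathbb{P}^1_y$ (and $\to\mathbb{P}^1_x$), and in degree at least four this ramification is already enough to force $2g-2>0$ component by component.

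For the self-curves $C_i$, the diagonal has been explicitly removed, so the surviving locus parametrizes nontrivial collisions $\phi_i(x)=\phi_i(y)$ with $x\ne y$. Here critical simplicity is the decisive hypothesis: every critical value of $\phi_i$ has ramification profile $(2,1,\ldots,1)$, which contributes a single simple ramification point to $C_i$ above that value. Combined with $\deg(\phi_i)\geq 4$, a Riemann--Hurwitz calculation on the projection $C_i\to\mathbb{P}^1_y$ then forces genus at least two on each component; this should also follow directly from the relevant classification result in \cite{Pakovich}.

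The main obstacle is ruling out low-genus components hidden inside $C_{j,k}$ or inside the non-diagonal part of $C_i$; any such component would correspond to a nontrivial compatible decomposition of the maps involved, with highly constrained ramification. Controlling this reducibility is precisely what the variables-separated classification of \cite{Pakovich} accomplishes, and the hypotheses (critically separate, critically simple, degree at least $4$) are the conditions that exclude every exceptional low-genus case. Once irreducibility-up-to-the-diagonal and the genus bounds are in hand for both families of curves, Faltings' theorem delivers the finiteness statement.
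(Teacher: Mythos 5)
Your proposal takes essentially the same route as the paper: invoke the irreducibility and genus results of \cite{Pakovich} for curves of separated-variable type, then apply Faltings. The paper is a bit more precise in that critical separateness gives outright irreducibility of $C_{j,k}$ via \cite[Proposition 3.1]{Pakovich} with genus exactly $(\deg\phi_j-1)(\deg\phi_k-1)\geq 9$, and critical simplicity gives irreducibility of $C_i$ via \cite[Corollary 3.6]{Pakovich} with genus exactly $(\deg\phi_i-2)^2\geq 4$, so no residual work on low-genus components is needed.
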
 
\begin{proof} Since $S$ is critically separate, \cite[Proposition 3.1]{Pakovich} implies that each $C_{j,k}$ is an irreducible curve for all $j\neq k$. Likewise, it is shown on \cite[p208]{Pakovich} that the genus of $C_{j,k}$ is given by $(\deg(\phi_j)-1)(\deg(\phi_k)-1)\geq9$. Hence, the $C_{j,k}$ have at most finitely many rational points over any number field by Faltings' theorem. Likewise, \cite[Corollary 3.6]{Pakovich} implies that each $C_{i}$ is an irreducible curve. Moreover, it is shown on \cite[p210]{Pakovich} that the genus of $C_{i}$ is $(\deg(\phi_i)-2)^2\geq4$. Hence, the $C_{i}$ also have at most finitely many rational points over any number field by Faltings' theorem.      
\end{proof} 
For the sets of polynomials in Theorem \ref{thm:freemonoid}, it suffices for our purposes to show that the curves in (\ref{curves}) have only finitely many integral points (as opposed to rational points). To do this, we need the integral point classification theorems in \cite{Bilu} and the Appendix \ref{Appendix}. To put these results in context, we first recall the definition of Siegel factors and Siegel's integral point theorem. 
\begin{definition} A \emph{Siegel polynomial} over a field $K$ is an absolutely irreducible polynomial $\Phi(x,y)\in K[x,y]$ for which the curve $\Phi(x,y)=0$ has genus zero and has at most two points at infinity. A \emph{Siegel factor} of a polynomial $\Psi(x,y)\in K[x,y]$ is a factor of $\Psi$ which is a Siegel polynomial over $K$.  
\end{definition} 
The following result explains the relevance of Siegel factors in this context and is one of the most important results in arithmetic geometry; see Theorems 8.2.4 and 8.5.1 in \cite{Lang}.  
\begin{theorem}[Siegel]{\label{Siegel}} Let $R$ be a finitely generated integral domain of characteristic zero, let $K$ be the field of fractions of $R$, and let $\Phi(x,y)\in K[x,y]$. Then there are only finitely many pairs $(x,y)\in R\times R$ for which $\Phi(x,y)=0$ unless $\Phi(x,y)$ has a Siegel factor over $K$.   
\end{theorem}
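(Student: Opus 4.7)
The plan is to follow the classical route: factor $\Phi$ into absolutely irreducible components and apply the appropriate refinements of Siegel's integral point theorem to each. Since $\Phi(x,y) \in K[x,y]$, I would first factor $\Phi = \prod_j \Psi_j$ into its $K$-irreducible components, so that any integral solution lies on some $\Psi_j$ and it suffices to bound the integral zero-set of each factor. If some $\Psi_j$ is irreducible over $K$ but not absolutely irreducible, a Galois argument shows that any integral zero of $\Psi_j$ lies simultaneously on all of its $\overline{K}$-conjugate components, and these finitely many absolutely irreducible curves intersect in a finite set of points; such factors therefore contribute only finitely many solutions. Thus I may reduce to absolutely irreducible factors that, by the hypothesis of no Siegel factor, have either a smooth projective model of genus at least one, or genus zero with at least three points at infinity.

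Next, I would treat the two remaining cases separately. In the positive-genus case, Siegel's classical theorem on integral points of curves of genus $\geq 1$ gives finiteness; the standard proof combines Roth's theorem on rational approximation to algebraic numbers with the embedding of the curve into its Jacobian and a gap-principle argument that converts Mordell--Weil finite generation into a height inequality forcing finiteness. In the genus-zero case with three or more points at infinity, I would rationally parametrize the smooth projective model by $\mathbb{P}^1$, pulling back the points at infinity to a set $T \subseteq \mathbb{P}^1$ of cardinality at least three; integral points on the affine curve then correspond to $R$-integral points on $\mathbb{P}^1 \setminus T$, and after sending three points of $T$ to $\{0,1,\infty\}$ the problem collapses to the $S$-unit equation $u+v=1$, which has only finitely many solutions by the theorem of Mahler and Evertse.

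The main obstacle is the passage from these number-theoretic inputs (originally established over number fields) to the generality of an arbitrary finitely generated integral domain $R$ of characteristic zero. The approach carried out in \cite{Lang} is a specialization argument: one embeds $R$ into a localization of a polynomial ring over a number field $k$, then uses Hilbert irreducibility together with a specialization theorem for integral-point sets to show that if $\Phi(x,y)=0$ had infinitely many integral solutions in $R \times R$, one could specialize the transcendentals to algebraic values in $k$ and produce a specialized curve over $k$ with infinitely many integral solutions, contradicting the number-field case already settled. Ensuring that the generic specialization preserves both absolute irreducibility and the genus-and-points-at-infinity data is the delicate technical step on which the whole reduction rests.
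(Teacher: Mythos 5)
The paper does not actually prove this statement: Theorem \ref{Siegel} is quoted as a known classical result, with a pointer to Theorems 8.2.4 and 8.5.1 of \cite{Lang}, so there is no internal argument to measure your proposal against. Taken on its own terms, your outline is the standard route and its first two stages are sound: the reduction to absolutely irreducible factors is correct (a zero in $R\times R\subseteq K\times K$ of a $K$-irreducible but not absolutely irreducible factor is Galois-stable, hence lies on at least two distinct conjugate components over $\overline{K}$, which meet in only finitely many points), and the dichotomy forced by the absence of a Siegel factor --- genus at least one, or genus zero with at least three places at infinity, the latter collapsing to the unit equation after parametrizing the normalization by $\mathbb{P}^1$ --- is exactly how the finiteness is obtained in the number-field setting.

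Two caveats on the third stage, which is where the whole content of the statement at this level of generality lives. First, the same extension issue affects your inputs: Evertse--Mahler finiteness for $u+v=1$ is the $S$-integer statement, and over a general finitely generated domain one needs the extended unit-equation theorem (due to Lang), so the reduction to finitely generated rings cannot be quarantined in a single final step. Second, you locate the delicacy of the specialization argument in preserving absolute irreducibility, genus, and the points at infinity; that part is comparatively routine (generic specialization, Bertini--Noether). The genuinely delicate point is the opposite one: a single specialization of the transcendental parameters can collapse infinitely many integral solutions to finitely many, so one must argue that infinitely many distinct solutions survive, and this is why the treatment in the literature proceeds by induction on transcendence degree, separating the ``constant'' points (handled by the smaller field) from the non-constant ones (handled by the function-field analogue of Siegel's theorem), rather than by a one-shot Hilbert-irreducibility specialization. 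Since the paper uses the theorem as a black box, this does not affect anything downstream, but a self-contained proof would have to do real work precisely at that point.
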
 
\begin{remark}{\label{rem:extension}} Clearly if $K$ is a number field (viewed inside the complex numbers) and $\Phi(x,y)$ has no Siegel factors over $\mathbb{C}$, then $\Phi(x,y)$ has no Siegel factors over $K$. Therefore, to prove that the equation $\Phi(x,y)=0$ has only finitely many solutions $(x,y)$ in some ring of $\mathcal{S}$-integers $R\subset K$, it suffices to show that $\Phi(x,y)$ has no Siegel factors over $\mathbb{C}$.     
\end{remark} 
To use Siegel's integral point theorem to show that $f(P)=g(P)$ occurs infrequently for $f,g\in M_S$ and $P$ of sufficiently large height (see Lemma \ref{injective} for a precise statement), we need the following theorem of Bilu and Tichy \cite[Theorem 10.1]{Bilu}, which classifies the polynomials $\Phi(x,y)=F(x)-G(y)$ having a Siegel factor.   
\begin{theorem}{\label{classification}} For non-constant $F,G\in\mathbb{C}[x]$, if $F(x)-G(y)$ has a Siegel Factor in $\mathbb{C}[x,y]$ then $F=E\circ F_1\circ\mu$ and $G=E\circ G_1\circ \nu$, where $E, \mu, \nu\in\mathbb{C}[x]$ with $\deg(\mu)=\deg(\nu)=1$ and either $(F_1,G_1)$ or $(G_1,F_1)$ is one of the following pairs (here $m,n\geq1$ and $p\in\mathbb{C}[x]\mysetminus\{0\}$): \vspace{.1cm} 
\begin{enumerate} 
\item[\textup{(a)}] $\big(x^m,x^rp(x)^m\big)$, where $r\in\mathbb{N}$ is coprime to m; \vspace{.15cm} 
\item[\textup{(b)}] $\big(x^2,(x^2+1)p(x)^2\big)$;  \vspace{.15cm} 
\item[\textup{(c)}] $\big(T_m,T_n\big)$ with $\gcd(m,n)=1$; \vspace{.15cm} 
\item[\textup{(d)}] $\big(T_m,-T_n\big)$ with $\gcd(m,n)>1$; \vspace{.15cm} 
\item[\textup{(e)}] $\big((x^2-1)^3,3x^4-4x^3\big)$. 
\end{enumerate}   
\end{theorem}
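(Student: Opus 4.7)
The plan is to combine Ritt's theory of polynomial decomposition with a monodromy-theoretic analysis of the components of $F(x)-G(y)$, broadly following Fried's approach to equations of this shape. The first step would be a reduction to a ``coprime'' situation: if $F$ and $G$ share a non-trivial common left composition factor, i.e.\ $F = E\circ\tilde F$ and $G = E\circ\tilde G$ with $\deg E>1$, then any Siegel factor of $F(x)-G(y)$ pulls back to a Siegel factor of $\tilde F(x)-\tilde G(y)$. Iterating until no further such $E$ can be extracted produces the outer $E$ in the conclusion, and any degree-one composition factors on the right can be absorbed into $\mu$ and $\nu$.

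Next I would analyse the $\mathbb{C}$-irreducible components of the curve $F(x)=G(y)$ through their monodromy. Setting $t=F(x)=G(y)$ and letting $\Omega$ be the Galois closure of $\mathbb{C}(x)\cdot\mathbb{C}(y)$ over $\mathbb{C}(t)$, the group $A=\Gal(\Omega/\mathbb{C}(t))$ carries two transitive permutation actions, on the roots of $F(X)-t$ and of $G(Y)-t$, and the irreducible components of $F(x)-G(y)=0$ correspond to the orbits of $A$ on the product of these root sets. For each such component, Riemann--Hurwitz applied to the projection to $\mathbb{P}^1$ expresses the genus in terms of the cycle structure of inertia generators at the branch points of $F$ and $G$. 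Requiring that the component be a Siegel factor, i.e.\ genus $0$ with at most two places above $\infty$, forces strong numerical constraints on those cycle types; in particular the fact that the inertia at $\infty$ for a polynomial cover is a single long cycle is decisive.

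The third step is to exploit these constraints to classify the possible monodromy groups. Combining the genus-zero requirement with the single-cycle behaviour at infinity, I would argue, either by direct combinatorics for small cases or by invoking the Fried--Guralnick--Magaard classification of primitive monodromy groups of indecomposable polynomials together with the O'Nan--Scott theorem, that the monodromy of each Siegel component is cyclic, dihedral, or one of a short list of exceptional types. Cyclic monodromy, combined with Ritt's second theorem on commuting polynomials, produces the pairs of type (a) and the degenerate case (b); dihedral monodromy corresponds, via the classical factorisation of $T_m(x)-T_n(y)$, to the Chebyshev pairs (c) and (d); and the remaining sporadic configuration yields (e).

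The hard part will be the monodromy classification itself: controlling the primitive permutation groups that can appear as Galois groups of indecomposable polynomials whose ramification fits the Siegel constraints is a non-trivial group-theoretic input, and translating the resulting list back into explicit polynomial identities, while keeping careful track of which rescalings and linear changes of variable get absorbed into the outer $E$ and the right-hand maps $\mu,\nu$, is a delicate bookkeeping exercise that accounts for most of the length of the original proof of Bilu and Tichy.
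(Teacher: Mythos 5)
First, note that the paper does not prove this statement at all: as the remark immediately following it says, it is a simplified form of \cite[Theorem 10.1]{Bilu} quoted from \cite[Corollary 2.7]{linear}, so your proposal has to be measured against the original Bilu--Tichy argument. Measured that way, your first step already contains a genuine gap. You reduce by stripping off a maximal common left compositional factor and assert that a Siegel factor of $F(x)-G(y)$ ``pulls back'' to a Siegel factor of $\tilde F(x)-\tilde G(y)$. But if $F=E\circ \tilde F$ and $G=E\circ \tilde G$, then writing $E(u)-E(v)=\prod_i q_i(u,v)$ gives $F(x)-G(y)=\prod_i q_i(\tilde F(x),\tilde G(y))$, and the given Siegel factor may divide $q_i(\tilde F(x),\tilde G(y))$ for some irreducible factor $q_i\neq u-v$; nothing in your argument relates it to $\tilde F(x)-\tilde G(y)$, and the ``coprime'' pair produced by your iteration need not inherit any Siegel factor. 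Handling exactly this difficulty is the content of Fried's theorem \cite{F}, where the decompositions $f=e\circ f_1$, $g=e\circ g_1$ are chosen adapted to the factorization (so that every irreducible factor of $f(x)-g(y)$ arises as $q(f_1(x),g_1(y))$ for $q$ an irreducible factor of $e(u)-e(v)$); this is the first pillar of the Bilu--Tichy proof, and your plan replaces it with an unproved assertion.

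The rest of the proposal defers the actual work. The classification of components of genus zero with at most two places at infinity is precisely where Bilu and Tichy (together with Bilu's earlier analysis of linear and quadratic factors of $f(x)-g(y)$, and Ritt's second theorem --- which concerns the functional equation $a\circ b=c\circ d$, not commuting polynomials) spend most of their paper; saying that cyclic monodromy ``produces'' (a) and (b), dihedral monodromy produces (c) and (d), and a sporadic configuration yields (e) describes the answer rather than deriving it, and the heavy machinery you invoke (Fried--Guralnick--Magaard, O'Nan--Scott) is not what is needed in any case, since the Siegel conditions constrain components of the fibre product $F(x)=G(y)$ rather than the polynomial covers themselves. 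As it stands, the proposal is a roadmap to the literature it would have to reproduce, not a proof. For the purposes of this paper the theorem is deliberately imported as a black box; the only ingredient proved from scratch is the special case needed for the curves $C_i$, namely Theorem \ref{T.T2} in the Appendix \ref{Appendix}, which proceeds by an elementary Puiseux-expansion and Galois-theoretic argument. If you want a self-contained treatment, that appendix is the model to imitate, not a re-derivation of the full Bilu--Tichy classification.
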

\begin{remark} Technically, the statement above is a simplified version of \cite[Theorem 10.1]{Bilu} taken from \cite[Corollary 2.7]{linear}. For a more detailed description of the classification of pairs $(F,G)$ such that $F(x)-G(y)$ has a Siegel factor (with the relevant fields of definition taken into account), see \cite{Bilu}.      
\end{remark} 
In particular, condition (3) of Theorem \ref{thm:freemonoid} implies that the affine curves 
\[C_{i,j}: \phi_i(x)=\phi_j(y)\;\;\;\text{for}\; i\neq j\] 
have only finitely many integral points. Here we use Theorem \ref{Siegel}, Remark \ref{rem:extension}, and Theorem \ref{classification}: the pairs (a)-(d) in Theorem \ref{classification} are ruled out by condition (3) by examining first coordinates only (all cyclic or Chebychev polynomials). Likewise, $(x^2-1)^3=F\circ E\circ L$, where $F(x)=(x-1)^3$, $E(x)=x^2$ is cyclic, and $L(x)=x$. Hence, the pair in (e) is also ruled out by condition (3). Similarly, condition (3) implies that the affine curves 
\[C_i: \frac{\phi_i(x)-\phi_i(y)}{x-y}=0\] 
have only finitely many integral points. Here we use Theorem \ref{Siegel} and Theorem \ref{T.T2} in the Appendix; Zannier has shown that such curves have at least $3$ points at infinity over $\mathbb{C}$ and thus cannot have a Siegel factor over any number field. In particular, we are now ready to prove our orbit counts for $\mathbb{P}^1$ from the Introduction.  
\begin{proof}[(Proof of Theorems \ref{thm:rational} and \ref{thm:freemonoid})] Suppose that $S$ is a critically separate and critically simple set of rational functions or that $S$ is a set of polynomials satisfying conditions (1)-(3) of Theorem \ref{thm:freemonoid}. Then in particular, $M_S$ is free by Proposition \ref{prop:rational} in the rational function case, and $M_S$ is free by Theorem \ref{thm:justfree} in the polynomial case. Hence, Theorem \ref{thm:count} implies that the number of functions $f\in M_S$ satisfying $H(f(P))\leq B$, has the desired growth rate (in either case), whenever $P$ has large enough height. 

To pass from functions to points, we need to control when $f(P)=g(P)$ is possible for $f,g\in M_S$. With this in mind, let $R_P\subset K$ be a ring of $\mathcal{S}$-integers in some number field $K$ (not the same $S$ as the set of functions) containing $P$ and the coefficients of the maps in $S$. Then define the quantities 
\[
\Scale[.93]{
\kappa_P:=\max\Big\{h(x)\,:\text{$(x,y)\in C_i(K)$ or $(x,y)\in C_{j,k}(K)$ for some $y\in K$ and some $i,j,k$}\Big\}}.\] 
in the rational function case and 
\[
\Scale[.93]{
\kappa_P:=\max\Big\{h(x)\,:\text{$(x,y)\in C_i(R_P)$ or $(x,y)\in C_{j,k}(R_P)$ for some $y\in R_P$ and some $i,j,k$}\Big\}}.\] 
in the polynomial case. Then in either case, $\kappa_P$ is finite by Proposition \ref{prop:rational}, Theorem \ref{Siegel}, Remark \ref{rem:extension}, Theorem \ref{classification}, and Theorem \ref{T.T} in the Appendix. Now given $f=\theta_1\circ\theta_2\circ\dots\circ \theta_n\in M_S$, define the length of $f$ to be $\ell(f)=n$; note that this quantity is well-defined since $M_S$ is free. Moreover letting $\textbf{v}=(1,\dots,1)$, we see that $\ell=\ell_{S,\textbf{v}}$ in our earlier notation. Next, recall the constant $b_S$ given by $b_S=C_S/(d_S-1)$, where $C_S$ and $d_S$ are the height constants in Definition \ref{def:htcontrolled} above. Then, Tate's telescoping Lemma \ref{lem:tate} implies that if $h(\rho(P))\leq \kappa_P$ for some $P$ with $h(P)>2b_S$ and some $\rho\in M_S$, then  
\begin{equation}\label{length1} 
2^{\ell(\rho)}b_S\leq\deg(\rho)(h(P)-b_S)\leq h(\rho(P))\leq\kappa_P.
\end{equation}  
Hence, the length of such $\rho$ is bounded; specifically, $\ell(\rho)\leq\max\big\{1,\lceil\log_2(\kappa_P/b_S)\rceil\big\}:=r_P$, from which we deduce the following fact.    
\begin{lemma}\label{lem:length1} Suppose that $S$ satisfies the conditions of Theorems \ref{thm:rational} or \ref{thm:freemonoid} and let $\rho\in M_S$. If $\ell(\rho)>r_P$, $h(P)>2b_S$, and $\theta(\rho(P))=\tau(P')$ for some $P'\in R_P$ and some $\theta,\tau\in S$, then $\theta=\tau$ and $\rho(P)=P'$.  
\end{lemma}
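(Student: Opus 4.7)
The plan is to argue by contradiction, pairing the very definition of $\kappa_P$ (which packages the finiteness of integral, resp.\ rational, points on the curves $C_i$ and $C_{j,k}$ of \eqref{curves}) with the telescoping estimate \eqref{length1}. Set $x:=\rho(P)$ and $y:=P'$. I would first observe that $(x,y)$ is of the type admitted in the definition of $\kappa_P$: in the rational-function setting this is automatic since $x,y\in K$, while in the polynomial setting $\rho(P)\in R_P$ because $P\in R_P$ and every $\phi\in S$ has coefficients in $R_P$.

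Next I would split on whether $\theta=\tau$. If $\theta\neq\tau$, then the hypothesis $\theta(\rho(P))=\tau(P')$ says exactly that $(x,y)\in C_{\theta,\tau}$, so by definition $h(\rho(P))\leq\kappa_P$. But the chain \eqref{length1}, which applies because $h(P)>2b_S$ guarantees $h(P)-b_S>b_S$, already forces $2^{\ell(\rho)}b_S\leq h(\rho(P))$, hence $\ell(\rho)\leq\lceil\log_2(\kappa_P/b_S)\rceil\leq r_P$, contradicting $\ell(\rho)>r_P$. So $\theta=\tau$. For the second half, suppose $\theta=\tau$ but $\rho(P)\neq P'$. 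Since $x-y$ divides $\theta(x)-\theta(y)$ in $\overline{\mathbb{Q}}[x,y]$, the quotient $(\theta(x)-\theta(y))/(x-y)$ is a genuine polynomial, and because $x\neq y$ while $\theta(x)=\theta(y)$ it vanishes at $(x,y)$. Thus $(x,y)\in C_\theta$, so once again $h(\rho(P))\leq\kappa_P$ and the same telescoping chain produces the same contradiction with $\ell(\rho)>r_P$. Therefore $\rho(P)=P'$, as claimed.

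The real work underpinning this lemma sits outside the lemma itself: the contradiction is a mechanical clash between a uniform upper bound $\kappa_P$ and a geometric-in-$\ell(\rho)$ lower bound $2^{\ell(\rho)}b_S$, so the only nontrivial ingredient is the finiteness of $\kappa_P$. That finiteness is exactly the hard part, and it is precisely the content already assembled in the previous pages: Proposition \ref{prop:rational} together with Faltings' theorem in the rational case, and Siegel's theorem combined with the Bilu--Tichy classification (Theorem \ref{classification}) and Zannier's Theorem \ref{T.T2} from the Appendix in the polynomial case. Once $\kappa_P<\infty$ is in hand, the lemma follows by the short contradiction argument sketched above.
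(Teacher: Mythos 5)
Your proof is correct and is essentially the argument the paper intends: the paper deduces the lemma immediately after the displayed inequality \eqref{length1}, and your case split ($\theta\ne\tau$ giving a point on $C_{\theta,\tau}$, resp.\ $\theta=\tau$ with $\rho(P)\ne P'$ giving a point on $C_\theta$) combined with the telescoping lower bound $2^{\ell(\rho)}b_S\le h(\rho(P))\le\kappa_P$ is exactly the intended contradiction with $\ell(\rho)>r_P$. You also correctly flag the one substantive ingredient hiding in the definition of $\kappa_P$, namely the finiteness results assembled earlier (Proposition \ref{prop:rational}, Siegel's theorem, the Bilu--Tichy classification, and the Appendix), and you verify the needed integrality/rationality of $(\rho(P),P')$ in each setting.
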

In particular, this allows us to control the number of functions in $M_S$ that can agree at $P$. 
\begin{lemma}\label{injective} Suppose that $S$ satisfies the conditions of Theorems \ref{thm:rational} or \ref{thm:freemonoid} and $h(P)>2b_S$. Then there is a constant $t_{P,S}$ depending only on $P$ and $S$ such that 
\[\#\big\{f\in M_S\,:\, f(P)=Q\big\}\leq t_{P,S}\] 
holds for all but finitely many $Q\in\Orb_S(P)$. 
\end{lemma}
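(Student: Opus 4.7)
The plan is to use Lemma~\ref{lem:length1} iteratively to peel common outer letters from any two $f,g\in M_S$ satisfying $f(P)=g(P)$, reducing them to compositions that agree on a long outer prefix and disagree only on a bounded number of inner letters.

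The first step is to show that $f(P)=g(P)$ forces $|\ell(f)-\ell(g)|\le r_P$. Writing $f=\theta_1\circ\cdots\circ\theta_n$ and $g=\tau_1\circ\cdots\circ\tau_m$ with $n\ge m$, suppose for contradiction that $n-m>r_P$. For $k=1,2,\ldots,m$, apply Lemma~\ref{lem:length1} successively to the equation
\[
\theta_k\big(\theta_{k+1}\circ\cdots\circ\theta_n(P)\big)=\tau_k\big(\tau_{k+1}\circ\cdots\circ\tau_m(P)\big),
\]
which is valid at each step because the inner $f$-part has length $n-k\ge n-m>r_P$. This forces $\theta_k=\tau_k$ for all $k\le m$ and, after the $m$-th peel, leaves the equation $\theta_{m+1}\circ\cdots\circ\theta_n(P)=P$. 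Since $\deg(\theta_{m+1}\circ\cdots\circ\theta_n)\ge 2$, Tate's telescoping Lemma~\ref{lem:tate} then gives $h(P)\le 2b_S$, contradicting our standing hypothesis $h(P)>2b_S$.

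The second step extracts a factorization. Assume $n=\ell(f)\ge\ell(g)=m$ with $n-m\le r_P$ and $n>r_P+1$, and peel exactly $k^{\ast}:=n-r_P-1$ times, which is legitimate at each step because the inner $f$-part retains length $n-k>r_P$. The outcome is a common-prefix decomposition
\[
f=\alpha\circ f',\qquad g=\alpha\circ g',
\]
where $\alpha=\theta_1\circ\cdots\circ\theta_{k^{\ast}}$ is determined by $f$, the inner parts satisfy $\ell(f')=r_P+1$ and $\ell(g')\in[1,r_P+1]$, and crucially $f'(P)=g'(P)$.

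Finally, fix $f$ with $\ell(f)=n>r_P+1$ and bound the number of $g$ with $g(P)=f(P)$. By Step one, $\ell(g)\in[n-r_P,n+r_P]$, so at most $2r_P+1$ lengths occur. For each length, Step two (applied after orienting the pair so that whichever of $f,g$ is longer plays the role of ``$f$'') pins down a common outer prefix $\alpha$ entirely in terms of $f$, and leaves the inner piece of $g$ in a set of size at most $s^{r_P+1}$. This yields the uniform bound $t_{P,S}:=(2r_P+1)\cdot s^{r_P+1}$, depending only on $P$ and $S$ via $b_S$, $d_S$, $\kappa_P$ and $s$. The finitely many exceptional $Q\in\Orb_S(P)$ not covered this way are those of the form $f(P)$ with $\ell(f)\le r_P+1$, of which there are at most $s^{r_P+2}$. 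The main obstacle is Step one: the fixed-point contradiction relies subtly on the strict height hypothesis $h(P)>2b_S$ together with Tate's Lemma, and one must verify throughout the peeling that the inner images $\tau_{k+1}\circ\cdots\circ\tau_m(P)$ remain in the appropriate set (rational points of $K$ in the rational case or $\mathcal{S}$-integers of $R_P$ in the polynomial case) required for Lemma~\ref{lem:length1} to apply.
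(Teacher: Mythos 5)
Your proposal is correct, and it runs on the same two engines as the paper's argument---iterated peeling of outer letters via Lemma \ref{lem:length1} together with Tate's telescoping Lemma \ref{lem:tate}---but it is organized around a step the paper never takes: the length-rigidity claim that $f(P)=g(P)$ forces $|\ell(f)-\ell(g)|\le r_P$, which you obtain by peeling off all the letters of the shorter word and turning the residual relation ``tail of the longer word fixes $P$'' into the contradiction $h(P)\le 2b_S$ via Lemma \ref{lem:tate}. The paper instead fixes a representative $f_Q$ of \emph{minimal} length, writes any other $f$ with $f(P)=Q$ as the same outer block composed with $q_f\circ\rho_f$ where $\ell(\rho_f)=r_P+1$, peels the outer block with Lemma \ref{lem:length1}, and then bounds $\deg(q_f)$ (hence $\ell(q_f)$) by comparing $h(q_f\circ\rho_f(P))=h(\rho_Q(P))$ with the Tate estimates; its exceptional set is $\{Q: h(Q)\le \mathfrak{d}_S^{\,r_P+1}(h(P)+b_S)\}$, whose finiteness uses Northcott. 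Your route buys a uniform constant $t_{P,S}=(2r_P+1)s^{r_P+1}$ that does not involve $h(P)$, an exceptional set described purely combinatorially (images of words of length at most $r_P+1$), and no appeal to Northcott; the paper's route avoids your orientation bookkeeping, since it only ever compares against the minimal-length word, at the cost of a constant depending on $h(P)$. The two caveats you flag do hold and should be written out: the inner images $\tau_{k+1}\circ\cdots\circ\tau_m(P)$ (and $P$ itself at the final peel) lie in $R_P$ in the polynomial case and in $\mathbb{P}^1(K)$ in the rational case, because $P$ and the coefficients of the maps in $S$ do; and when $\ell(g)>\ell(f)$ one needs $\ell(g)-r_P-1\le \ell(f)$ so that the common prefix is genuinely a prefix of $f$, which is exactly what your first step supplies.
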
 
\begin{proof} Let $\mathfrak{d}_S=\max\{\deg(\phi):\phi\in S\}$ and suppose that $Q\in\Orb_S(P)$ satisfies  
\[h(Q)>{\mathfrak{d}_S}^{r_P+1}(h(P)+b_S),\] true of all but finitely many $Q$ by Northcott's Theorem;  each $Q\in\Orb_S(P)\subseteq\mathbb{P}^1(K)$ by construction of $K$. Then, it follows from Tate's telescoping Lemma \ref{lem:tate} that $\ell(f)>r_P+1$ for all $f\in M_S$ with $f(P)=Q$: otherwise, 
\[h(Q)=h(f(P))\leq\deg(f)(h(P)+b_S)\leq{\mathfrak{d}_S}^{\ell(f)}(h(P)+b_S)\leq{\mathfrak{d}_S}^{r_P+1}(h(P)+b_S),\]
a contradiction. In particular, each function taking the value of $Q$ at $P$ has length strictly larger than $r_P$+1. Now, let $f_Q\in M_S$ be a function of smallest length taking the value of $Q$ at $P$. Then $\ell(f_Q)>r_p+1$ and we may write $f_Q=\tau_1\circ\dots\circ\tau_m\circ\rho_Q$ form some $\tau_i\in S$, some $m\geq1$,  and some $\rho_Q\in M_S$ of length $r_P+1$. Likewise, for any other $f\in M_S$ with $f(P)=Q$, we may write $f=\theta_{1,f}\circ\dots\circ\theta_{m,f}\circ q_f\circ\rho_f$ for some $\theta_{i,f}\in S$, some $q_f\in M_S$, and some $\rho_f\in M_S$ of length $r_P+1$; here we use the minimality of the length of $f_Q$. Then $f(P)=f_Q(P)$ implies: 
\begin{equation}\label{length1}
\theta_{1,f}\circ\dots\circ\theta_{m,f}\circ q_f\circ\rho_f(P)=\tau_1\circ\dots\circ\tau_m\circ\rho_Q(P). 
\end{equation}
Now for all $1\leq i\leq m$, let $\rho_i=\theta_{i+1,f}\circ\dots\circ\theta_{m,f}\circ q_f\circ\rho_f$ and $P_i'=\tau_{i+1}\circ\dots\circ\tau_m\circ\rho_Q(P)$. In particular, \eqref{length1} becomes  
\[\theta_{1,f}(\rho_1(P))=\tau_1(P_1').\]
On the other hand, $P_i'\in R_P$ by definition of $R_P$ and $\ell(\rho_i)\geq\ell(\rho_{f})=r_P+1>r_P$ for all $i$. Hence, Lemma \ref{lem:length1} applied to $\rho=\rho_1$, $P'=P_1'$, $\theta=\theta_{1,f}$, and $\tau=\tau_1$ implies that $\theta_{1,f}=\tau_1$ and $\rho_1(P)=P_1'$. Therefore, 
\[\theta_{2,f}\circ\dots\circ\theta_{m,f}\circ q_f\circ\rho_f(P)=\tau_2\circ\dots\circ\tau_m\circ\rho_Q(P). \] 
Repeating the same argument, this time with $\rho=\rho_2$, $P'=P_2'$, etc., we see that Lemma \ref{lem:length1} implies that $\theta_{2,f}=\tau_2$ and  $\rho_2(P)=P_2'$. We can clearly continue this argument ($m$-times) and obtain that 
\begin{equation}\label{length2}
q_f\circ\rho_f(P)=\rho_Q(P)\qquad\text{and}\qquad\text{$\theta_{i,f}=\tau_i$ \;for all $1\leq i\leq m$}.
\end{equation} 
On the other hand, Tate's Telescoping Lemma \ref{lem:tate} and the fact that $h(P)>2b_S$ imply the lower bound 
\begin{equation}\label{length3}
2^{r_P+1}b_S\leq\deg(\rho_f)(h(P)-b_s)\leq h(\rho_f(P)). 
\end{equation} 
Likewise, we have the upper bound
\begin{equation}\label{length4}
h(\rho_Q(P))\leq\deg(\rho_Q)(h(P)+b_S)\leq{\mathfrak{d}_S}^{r_P+1}(h(P)+b_S).
\end{equation} 
Hence, after combining \eqref{length2}, \eqref{length3} and \eqref{length4} with Lemma \ref{lem:tate} applied to the map $q_f$, we see that \vspace{.05cm} 
\[
\Scale[.9]{\deg(q_f)(2^{r_P+1}-1)b_S\leq\deg(q_f)(h(\rho_f(P))-b_S)\leq h(q_f\circ\rho_f(P))=h(\rho_Q(P))\leq{\mathfrak{d}_S}^{r_P+1}(h(P)+b_S)}.  \vspace{.15cm}\] 
In particular, dividing both sides of the inequality above by $(2^{r_P+1}-1)b_S$, we deduce that 
\begin{equation}\label{length5}
2^{\ell(q_f)}\leq\deg(q_f)\leq\frac{{\mathfrak{d}_S}^{r_P+1}(h(P)+b_S)}{(2^{r_P+1}-1)b_S}. 
\end{equation}
Hence the length of $q_f$ is bounded. But $S$ is a finite set of maps, so the number of possible $q_f$'s is finite. Likewise, the length of $\rho_f$ is $r_P+1$ is bounded, and so there are only finitely many possible $\rho_f$'s. In summation, we have shown that if $f\in M_S$ is \emph{any} function with $f(P)=Q$, then $f=\tau_1\circ\dots\circ\tau_m\circ q_f\circ\rho_f$ such that: the $\tau_i$ are fixed, and the number of possible $q_f$'s and $\rho_f$'s are bounded independently of $Q$. Specifically, we have that
\[\#\big\{f\in M_S\,:\, f(P)=Q\big\}\leq s^{\log_2\Big\lceil\frac{{\mathfrak{d}_S}^{r_P+1}(h(P)+b_S)}{(2^{r_P+1}-1)b_S}\Big\rceil+r_P+1}\vspace{.1cm}\]
holds for all $Q\in\Orb_S(P)$ with $h(Q)>{\mathfrak{d}_S}^{r_P+1}(h(P)+b_S)$, which proves the claim.                          
\end{proof} 

We now finish the proof of Theorems \ref{thm:rational} and \ref{thm:freemonoid}. Note that Lemma \ref{injective} implies that: 
\[
\Scale[.85]{
t_{P,S}^{-1}\cdot\#\big\{f\in M_S\,:\, H(f(P))\leq B\big\}+O(1)\leq\#\big\{Q\in\Orb_S(P)\,:\, H(Q)\leq B\big\}\leq\#\big\{f\in M_S\,:\, H(f(P))\leq B\big\}} \vspace{.1cm}\]
holds for all $B$ sufficiently large and all $P$ such that $H(P)>e^{2b_S}$. Moreover, combining the  bounds above with Theorem \ref{thm:count}, we see that for all $\epsilon>0$ there exists an effectively computable positive constant $b=b(S,\epsilon)$ such that \vspace{.05cm} 
\begin{equation*}
(\log B)^{b}\ll\#\{Q\in \Orb_S(P)\,:\, H(Q)\leq B\}\ll (\log B)^{b+\epsilon}
\end{equation*}
as desired.   
\end{proof}
In higher dimensions, it is possible that one can attack Conjecture \ref{conjecture} in a similar manner to that above, provided that one can give a reasonable condition ensuring that the set of rational/integral points on the variety  
\[V_{f,g}:=\{(P,Q)\in\mathbb{P}^N\times\mathbb{P}^N\,:\, f(P)=g(Q)\}\]
is not Zariski dense (for all distinct $f,g\in M_S$ of some fixed length). To do this, it is likely necessary to assume the Bombieri-Lang Conjecture.

Likewise (although most sets generate free monoids), it would be interesting to study the height growth rates in monoid orbits which are \emph{not} free (or free commutative). As a test case, one might consider the following example from  \cite[Remark 1.5]{Zieve}: let $\omega$ be a primitive cube root of unity and let $F(x)=x^2$ and $G(x)=\omega x^2$. Then the monoid generated by $S=\{F,G\}$ has three independent relations: $F^2=G^2$, $F^2\circ G=G\circ F^2$, and $G\circ F\circ G=F\circ G\circ F$. \\[2pt]

\section{Appendix: integral points on curves ${f(X)-f(Y)\over X-Y}$ \\[2pt] (by Umberto Zannier)}\label{Appendix} 
\vspace{.1cm}
Let $f\in \C[X]$ be a  polynomial of degree $d\ge 2$ and let $\mathcal{O}$ be a finitely generated subring of $\C$. For  the sequel we  put
\begin{equation}\label{F}
F(X,Y)={f(X)-f(Y)\over X-Y}.
\end{equation}
Recall also that the {\it cyclic polynomial} of degree $n$ is simply $X^n$, and the {\it Chebyshev polynomial} of degree $n$  is the unique polynomial  $T_n$ satisfying the identity $T_n(Z+Z^{-1})=Z^n+Z^{-n}$. The purpose of the present Appendix is to prove the following: \vspace{.05cm} 

\begin{thm} \label{T.T} Assume that the plane curve defined by $F(X,Y)$ has infinitely many points in $\mathcal{O}^2$. Then there are an integer $n>1$ and polynomials $g, l \in\C[X]$, with $\deg l=1$, such that $f=g\circ S_n\circ l$, where  $S_n$ is either the cyclic or the Chebyshev polynomial of degree $n$. \vspace{.05cm} 
\end{thm}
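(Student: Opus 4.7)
The approach is to apply Siegel's integral point theorem and then to analyze the resulting Siegel factor of $F$ via Riemann--Hurwitz together with the classification theorem (Theorem \ref{classification}) for polynomials whose difference has a Siegel factor.

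By Theorem \ref{Siegel} applied to $\mathcal{O}$, combined with Remark \ref{rem:extension}, infinitely many integral points on $F(X,Y) = 0$ force $F$ to admit a Siegel factor over $\mathbb{C}$: an absolutely irreducible factor $G \in \mathbb{C}[X,Y]$ whose affine zero set has geometric genus zero and at most two points at infinity in $\mathbb{P}^2$. The plan is to exhibit the claimed decomposition $f = g \circ S_n \circ l$ from the existence of such a $G$. As a first step, I would locate the points at infinity explicitly: the leading bihomogeneous form of $F$ is
\[
a_d \cdot \frac{X^d - Y^d}{X - Y} \;=\; a_d \prod_{\zeta^d = 1,\,\zeta \neq 1}(X - \zeta Y),
\]
so the projective closure of $F = 0$ meets the line at infinity in the $d - 1$ distinct points $[\zeta : 1 : 0]$, and these are apportioned among the irreducible factors of $F$ according to the factorization of this form. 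Consequently $G$ has leading form of degree at most $2$, that is, a product of at most two linear factors $(X - \zeta Y)$.

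The next step is to translate the genus-zero condition and the bound on points at infinity into branching data for $f$. I would analyze the projection $\pi \colon \{G = 0\} \to \mathbb{A}^1$, $(x, y) \mapsto x$, whose fiber over $x_0$ consists of the points $y \neq x_0$ with $f(y) = f(x_0)$; its ramification is entirely controlled by the critical values of $f$. Applying Riemann--Hurwitz to $\pi$ together with the explicit description of the points at infinity above forces the critical values of $f$, and more finely the branch cycles of the monodromy group of $f$, into a highly restricted pattern, which in turn forces $f(X) - f(Y)$ to possess an irreducible factor of unusually low degree (with the divisor $X - Y$ already stripped off).

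The final step is to invoke the classification of polynomials $f$ for which $f(X) - f(Y)$ has a non-trivial Siegel factor beyond the diagonal. By Theorem \ref{classification} applied to the symmetric pair $(f, f)$, the only possibilities reduce to $f = g \circ S_n \circ l$ with $S_n$ cyclic or Chebyshev: cases (a) and (c)/(d) of that classification produce the cyclic and Chebyshev decompositions directly, case (b) collapses into case (a) under the symmetry $F = G$, and the sporadic case (e) is ruled out by a direct computation of its leading form against the constraint from the first step. The main obstacle I anticipate is precisely this Riemann--Hurwitz/monodromy step: one must carry out the analysis with enough precision to identify which irreducible factors of $F = 0$ can actually be genus-zero Siegel factors, separately checking that in the cyclic case the small factors are the binomials $X - \zeta Y$ coming from $(X^n - Y^n)/(X - Y)$, and that in the Chebyshev case they arise from the factorization of $(T_n(X) - T_n(Y))/(X - Y)$ into quadratic pieces via the addition formula $T_n(u) + T_n(v) = 2\,T_{n/2}(\cdots)\,T_{n/2}(\cdots)$ when $n$ is even (and the analogous identities otherwise), both of which must be shown to yield Siegel factors with the correct points-at-infinity count.
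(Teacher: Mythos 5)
Your opening step is fine and matches the paper: by Siegel's theorem an irreducible factor $G$ of $F$ must have genus $0$ and at most two points at infinity, and since the top-degree form of $F$ is the squarefree product $\prod_{\zeta^d=1,\,\zeta\neq1}(X-\zeta Y)$, the factor $G$ has exactly $\deg G$ points at infinity, so $\deg G\le 2$. But from there the proposal has two genuine gaps. First, the ``Riemann--Hurwitz/monodromy'' step is not an argument: saying that the branching of $f$ is forced ``into a highly restricted pattern'' which ``forces $f(X)-f(Y)$ to possess an irreducible factor of unusually low degree'' is a placeholder for exactly the work that has to be done (and you already \emph{have} the low-degree factor from the points-at-infinity count -- what is missing is extracting the decomposition of $f$ from it). Second, the appeal to Theorem \ref{classification} applied to the pair $(f,f)$ yields nothing: $f(x)-f(y)$ always has the Siegel factor $x-y$, and the conclusion of that theorem is satisfied trivially by \emph{every} polynomial (take case (a) with $m=1$, $F_1=x$, $G_1=x$, $E=f$), so no case analysis of the list (a)--(e) can force $f=g\circ S_n\circ l$ with $n>1$. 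What you would need is a classification of those $f$ for which $(f(X)-f(Y))/(X-Y)$, i.e.\ $F$ itself with the diagonal removed, has a Siegel factor -- but that is precisely the statement being proved (it is Fried's theorem in the indecomposable case and Avanzi--Zannier in general), so citing the quoted Bilu--Tichy corollary here is circular in effect.

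For contrast, the paper's proof (Theorem \ref{T.T2}) never uses the genus-zero condition at all, only the bound on points at infinity, and then treats the two degrees directly. If $\deg\Phi=1$, after normalizing $f$ to be monic with vanishing second coefficient, $\Phi=Y-aX$ and $f(aX)=f(X)$ forces $f=g(X^n)$ with $n$ the order of the root of unity $a$ (the cyclic case). If $\deg\Phi=2$, Puiseux expansions at the two points at infinity pin down $\Phi=Y^2-sXY+pX^2-k$, and a short function-field argument -- writing $\mathbb{C}(x,y)=\mathbb{C}(z_+)$ with $z_\pm=y-a_\pm x$, observing that the two conjugations over $\mathbb{C}(x)$ and $\mathbb{C}(y)$ generate a dihedral group of order $2n$, and identifying its fixed field as $\mathbb{C}(w)$ with $w=z_+^n+\alpha^n z_+^{-n}$ -- shows $f(x)=g(w)$ with $g$ a polynomial, and a rescaling identifies $w$ with $T_n$ up to linear conjugation (the Chebyshev case). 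Supplying an argument of this kind for the degree-$1$ and degree-$2$ factors is the content your proposal is missing; alternatively one could quote the full Avanzi--Zannier or Fried classification, but not the symmetric-pair application of Theorem \ref{classification}.
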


\begin{rem} Note that the result has an easy converse, as soon as we allow some freedom on $\mathcal{O}$, as we now illustrate: \vspace{.2cm} 

(i) If $S_n(X)=X^n$  (after applying $l^{-1}$)   we obtain factors $X-\zeta Y$  ($\zeta^n=1$, $\zeta\neq 1$)  for our polynomial $F(X,Y)$, i.e. components of the curve which are lines defined over $\Q(\zeta)$. Therefore we obtain infinitely many points in $\mathcal{O}^2$ as soon as $\mathcal{O}$ contains $\zeta$ (and the coefficients of $l$).  \vspace{.2cm} 

(ii)  In the case $S_n=T_n$, from the defining property of $T_n$ we easily obtain (well-known) factors of $T_n(X)-T_n(Y)$ given by 
$X^2-(\zeta+\zeta^{-1})XY+Y^2+(\zeta-\zeta^{-1})^2$, for $\zeta\neq \pm 1$ an $n$-th root of unity. On setting $Y=W+W^{-1}$, this quadratic  in turn factors as $(X-\zeta W-\zeta^{-1}W^{-1})(X-\zeta^{-1}W-\zeta W^{-1})$.  Hence, if we let $w$ take values in $\mathcal{O}^*$ (which may well be infinite) and set $X=x=\zeta w+\zeta^{-1}w^{-1}$ we obtain again an infinity of points in $\mathcal{O}^2$.  We also obtain similarly  quadratic factors of $T_n(X)+T_n(Y)$, which are relevant when  $g(X)=h(X^2)$ is even. These factors divide also $T_{2n}(X)-T_{2n}(Y)$, since $T_{2n}=T_2\circ T_n=T_n^2-2$.

In the next version of the result, i.e. Theorem \ref{T.T2} below, we shall add a further conclusion which implies that all but finitely many integral points arise in this way. 
\end{rem}
\vspace{.05cm} 

As to the theorem, we recall at once that in virtue of  Siegel's Theorem   (extended suitably to finitely generated subrings) an {\it  irreducible}  affine  curve can have can have infinitely many (integral) points defined over $\mathcal{O}$ only if

 (i) it has genus $0$\qquad  and
 
  (ii) it has at most two points at infinity. \footnote{By {\it points at infinity} we mean the missing points with respect to a projective closure of the curve. This number may increase by passing to a smooth model, but the theorem applies to any model.}   
  
  See \cite{BG}, or \cite{L}, or \cite{SeMWT}. The crucial case is the original Siegel's 1929 version  over $\Z$, as extended later by Mahler to the rings of $S$-integers in  a number field. 
  
\medskip

Thus the problem is to investigate when the (possibly reducible) curve defined by $F(X,Y) $  has a component satisfying these `Siegel conditions' (which   cannot generally be improved).

This leads  in the first place  to the need to establish   {\it when the defining polynomial $F$ can be reducible}. If $f$ is {\it indecomposable} (i.e. not of the shape $g\circ h$ for polynomials $g,h$ of degree $>1$) then the  correct condition was found by Fried \cite{F}: namely, {\it $F$ is irreducible  unless $f(X)$ is either a cyclic or a Chebyshev polynomial up to a linear change of variable}, which of course corresponds to our conclusion. (See also Schinzel's book \cite{Sch}, especially 1.5, where  fields of definitions are considered as well, which instead we disregard here.)  An application of Fried's result would then directly yield   the present theorem  in the indecomposable cases. 

However,  if   $f$ is decomposable then  certainly $F(X,Y)$ is anyway reducible, and the issue leads to more delicate problems concerning the nature of the irreducible factors.  In the paper \cite{AZ} a  laborious classification is obtained for all the cases when there is a factor defining a curve of genus $0$.  The results of \cite{AZ} depend on some finite-group theory, which is used to an even much heavier extent in Mueller's paper \cite{M}, which again obtains  certain complete laborious classifications  relevant for suitable  applications of Siegel's theorem.

An applications of  \cite{AZ} would suffice for the present purposes of proving Theorem \ref{T.T}, even forgetting about Siegel's condition (ii). But in fact  it turns out that adding such  condition not only makes the former (i) automatic, but also leads to a much simpler  and self-contained elementary proof, which can be hopefully useful for some readers and for other applications.  Moreover this proof yields with little effort a slightly more precise conclusion, as in the last phrase of the statement below (which, as in the Remark above, allows to describe all but finitely many integral points). 

To present such a proof is the scope of this Appendix.
By the remarks above,  for Theorem \ref{T.T} it will suffice  to prove the following result (even disregarding the last conclusion):  \vspace{.2cm}  

\begin{thm} \label{T.T2} Assume that the polynomial  $F(X,Y)$ has an irreducible factor  $\Phi$ defining a curve 
with at most two points at infinity (in a closure in $\P_2$). Then $\deg\Phi\le 2$ and  there are an integer $n>1$ and polynomials $g, l \in\C[X]$, with $\deg l=1$, such that $f=g\circ S_n\circ l$, where  $S_n$ is  the  cyclic (if $\deg\Phi=1$)  or the Chebyshev   (if $\deg\Phi=2$)  polynomial of degree $n$.

If $\deg \Phi=1$, then $\Phi$ divides $l(X)^n-l(Y)^n$.  If $\deg\Phi=2$,  then    $\Phi$ is symmetric and either  it divides  $S_n(l(X))-S_n(l(Y))$, or $g$ is even  and  $\Phi$ divides 
$S_n(l(X))+S_n(l(Y))$. \vspace{.2cm}  
\end{thm}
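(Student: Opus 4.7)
The plan is to exploit the leading form of $F(X,Y)$ to bound $\deg\Phi$, then parametrize $\Phi=0$ rationally and read off a functional equation on $f$ that forces either cyclic or Chebyshev structure.

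\emph{Degree bound.} Writing $f(X)=a_d X^d+\cdots$, the top-degree form of $F(X,Y)$ is $a_d\prod_{\zeta^d=1,\,\zeta\ne 1}(X-\zeta Y)$, which is squarefree. Since $\Phi\mid F$, the leading form of $\Phi$ is a product of $\deg\Phi$ distinct factors $(X-\zeta_i Y)$ whose roots are precisely the points at infinity of $\Phi=0$ in $\mathbb{P}^2$. Hence $\deg\Phi\le 2$, and each $\zeta_i$ is a $d$-th root of unity different from $1$.

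\emph{Case $\deg\Phi=1$.} Write $\Phi=X-\zeta Y-c$; then $\Phi\mid F$ is equivalent to $f(\zeta Y+c)\equiv f(Y)$. Translate by the fixed point $y_0=c/(1-\zeta)$ of $Y\mapsto\zeta Y+c$, setting $l(Z)=Z-y_0$ and $\tilde f=f\circ l^{-1}$, so that $\tilde f(\zeta Z)=\tilde f(Z)$. Invariance under $\langle\zeta\rangle$ of order $n=\ord(\zeta)>1$ forces $\tilde f$ to be a polynomial in $Z^n$, so $f=g\circ Z_n\circ l$. The factorization $l(X)^n-l(Y)^n=\prod_{\omega^n=1}\bigl(l(X)-\omega l(Y)\bigr)$ exhibits $\Phi$ as the $\omega=\zeta$ factor, giving the divisibility claim.

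\emph{Case $\deg\Phi=2$.} The smooth conic $\Phi=0$ is rational, with two points at infinity determined by the roots $\alpha,\beta$ of its leading form (distinct $d$-th roots of unity $\ne 1$). Choose a rational parametrization $t\mapsto(X(t),Y(t))$ sending $t=0,\infty$ to those two points; after an affine change $l$ in $X$ and a rescaling of $t$, we may normalize to $l(X(t))=t+t^{-1}$ and $l(Y(t))=\alpha t+\beta t^{-1}+\gamma$. Then $\Phi\mid F$ becomes $\tilde f(t+t^{-1})=\tilde f(\alpha t+\beta t^{-1}+\gamma)$ for $\tilde f=f\circ l^{-1}$. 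The involution $t\mapsto 1/t$ fixes the LHS, yielding the secondary identity $\tilde f(\alpha t+\beta t^{-1}+\gamma)=\tilde f(\beta t+\alpha t^{-1}+\gamma)$. With the substitution $w=t\sqrt{\alpha/\beta}$, $\rho=\beta/\alpha$, and $\bar f(z):=\tilde f(\sqrt{\alpha\beta}\,z+\gamma)$, this rewrites as $\bar f(w+w^{-1})=\bar f(\rho w+(\rho w)^{-1})$; the Laurent polynomial $\bar h(w):=\bar f(w+w^{-1})$ is therefore invariant under both $w\mapsto w^{-1}$ and $w\mapsto\rho w$. Since $\rho\ne 1$ is a ratio of two $d$-th roots of unity, it has finite order $n\ge 2$, and the two invariances together force $\bar h(w)=G(w^n+w^{-n})=G\bigl(T_n(w+w^{-1})\bigr)$ for some polynomial $G$. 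Hence $\bar f=G\circ T_n$, and tracing back, $f=G\circ T_n\circ L$ for a linear $L$ assembled from $l$, $\sqrt{\alpha\beta}$, and $\gamma$.

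\emph{Symmetry of $\Phi$ and divisibility.} Using Vieta's relations on the two preimages $\tilde Y(t),\tilde Y(1/t)$ of the projection $\Phi=0\to\mathbb{A}^1_{\tilde X}$, one obtains the explicit formula $\Phi(\tilde X,\tilde Y)=\tilde Y^2-[(\alpha+\beta)\tilde X+2\gamma]\tilde Y+\alpha\beta\tilde X^2+(\alpha+\beta)\gamma\tilde X+\gamma^2+(\alpha-\beta)^2$, whose symmetry in $(\tilde X,\tilde Y)$ reduces to $\alpha\beta=1$ and $\gamma=0$ (the alternative $\alpha+\beta=-2$ forcing $\alpha=\beta=-1$, which contradicts $\alpha\ne\beta$). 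Under these conditions, the displayed $\Phi$ matches a standard symmetric quadratic factor of $T_n(l(X))-T_n(l(Y))$, and when $G$ is even the involution $u\mapsto -u$ (compatible with $T_n$ through the parity of $G$) produces the analogous factor of $T_n(l(X))+T_n(l(Y))$. The main obstacle I anticipate is precisely forcing $\alpha\beta=1$ and $\gamma=0$: once $f=G\circ T_n\circ L$ is established, one must rule out any asymmetric irreducible quadratic factor of $F$ with two points at infinity, which seems to require a careful factorization analysis of $T_n(X)\pm T_n(Y)$ and of $G(T_n X)-G(T_n Y)$ to certify that every such factor is of the symmetric Chebyshev type just described.
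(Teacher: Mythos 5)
Your degree bound, your handling of the linear case (including the divisibility $\Phi\mid l(X)^n-l(Y)^n$), and your derivation of $f=G\circ T_n\circ L$ in the quadratic case are all correct, and in substance they follow the appendix's own route: your Laurent parametrization $l(X)=t+t^{-1}$, $l(Y)=\alpha t+\beta t^{-1}+\gamma$ with the two invariances $w\mapsto w^{-1}$ and $w\mapsto \rho w$ is exactly the paper's function-field argument (the field $\C(z_+)$ acted on by the two conjugations generating a dihedral group of order $2n$) written in explicit coordinates.

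The genuine gap is the one you flag yourself: the entire second paragraph of the statement in the case $\deg\Phi=2$ --- that $\Phi$ is symmetric (in your normalization, $\alpha\beta=1$ and $\gamma=0$) and divides $T_n(l(X))-T_n(l(Y))$, or else $g$ is even and $\Phi$ divides $T_n(l(X))+T_n(l(Y))$ --- is not proved, and it does not follow from what you have established, because you only exploited the \emph{secondary} identity obtained by composing with $t\mapsto t^{-1}$; that identity determines the shape of $f$ but puts no further constraint on $(\alpha,\beta,\gamma)$. The missing step, which is exactly the ``careful factorization analysis'' you anticipate, is to return to the \emph{primary} identity $\tilde f(t+t^{-1})=\tilde f(\alpha t+\beta t^{-1}+\gamma)$ after substituting $\tilde f=G\circ T_n\circ M$ with $M(z)=(z-\gamma)/\sqrt{\alpha\beta}$. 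It then reads $G(U)=G(V)$ for two Laurent polynomials $U,V$ of the same degree $n$ in the parameter, and the appendix extracts the needed rigidity by the standard trick: with $b=\deg G$, the difference $U^b-V^b$ has degree at most $(b-1)n$, while all but at most one of its factors $U-\theta V$ (over $b$-th roots of unity $\theta$) have degree exactly $n$; hence one factor is constant, giving $T_n(y)=\theta\epsilon T_n(\nu y)+c$ together with $G(\theta X+c)=G(X)$. Comparing with the expansion $T_n(X)=X^n-nX^{n-2}+\cdots$ then forces $\nu^2=1$ and $c=0$, which in your coordinates is precisely $\alpha\beta=1$ and $\gamma=0$, and yields on the curve either $T_n(x)=T_n(y)$ or $T_n(x)=-T_n(y)$ with $g$ even, i.e.\ both the symmetry and the divisibility dichotomy. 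So your anticipated obstacle is real and substantive, not a formality: without this additional argument (or an appeal to a classification such as that of Avanzi--Zannier or Bilu--Tichy, which the appendix deliberately avoids), the last assertions of the theorem remain unproven.
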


\begin{proof} To start with, we normalize $f$ by assuming it is monic and with vanishing second coefficient: $f(X)=X^d+f_2X^{d-2}+\ldots +f_d$, $f_i\in\C$. This does not affect the results on taking into account the linear polynomial $l(X)$ in the statement. 

Our affine (possibly reducible)  curve $C_F:F(X,Y)=0$ has degree $d-1$. Note that the points at infinity in $\P_2$ of  (the closure of) this curve are given in homogenous coordinates $(x:y:z)$ by $z=0$, $x^d=y^d$, $x\neq y$, so they form a set of $d-1$ pairwise distinct points.\footnote{They are smooth points, which simplifies things as we do not need to refer to smooth models.} 

\medskip

Let $\Phi(X,Y)\in\C[X,Y]$ be an irreducible factor of $F(X,Y)$, defining an irreducible curve $C_\Phi$ with at most two points at infinity. The homogeneous part of $\Phi$ of highest degree must be a factor of $(X^d-Y^d)/(X-Y)$,  and  the points at infinity correspond to linear factors of this homogeneous part. Since this has not multiple factors, we deduce that $C_\Phi$ has $\deg \Phi$ points at infinity. Hence, if $C_\Phi$ satisfies Siegel's condition (ii), we must have  $\deg\Phi\le 2$.

From these considerations it also follows that we may  assume that $\Phi$ is monic in $Y$.

\medskip

Suppose first that $\deg\Phi=1$, so $\Phi(X,Y)=Y-aX-b$; hence   we must have $f(aX+b)=f(X)$ identically. Since however $f$ has vanishing second coefficient, this entails $b=0$, hence $f(aX)=f(X)$. We already know that $a$ is a $d$-th root of unity, $a\neq 1$. If $n$ is the exact order of $a$, then $n>1$ divides $d$ and $f$ must be a polynomial in $X^n$, i.e. $f(X)=g(X^n)$  and we fall into one of the cases of the conclusion.

Note that $Y-aX$ divides indeed $X^n-Y^n$ so the last assertion holds as well.

\bigskip

Suppose now that $\deg\Phi=2$. The two points at infinity of $C_\Phi$ correspond to two Puiseux expansions $Y=P_\pm(X):=a_{\pm}X+b_{0\pm}+b_{1\pm}X^{-1}+\ldots $ in descending powers of $X$, where $b_{i\pm}$ are complex numbers and $a_{\pm}$ are two  distinct $d$-th roots of $1$, both different from $1$. 

We have $\Phi(X,P_{\pm}(X))=0$ hence $F(X,P_{\pm}(X))=0$, so 
$f(X)=f(P_{\pm}(X))$ identically. As before, since $f$ has vanishing second coefficient this yields $b_{0\pm}=0$.  We may write
\begin{equation*}
\Phi(X,Y)=(Y-a_+X)(Y-a_-X)+L(X,Y)-k,
\end{equation*}
where $L$ is linear homogeneous and $k\in\C$. We have that $P_{\pm}(X)-a_{\pm}X=O(X^{-1})$, in the sense that it is a Puiseux series where no non-negative power of $X$ appears. Since $\Phi(X,P_{\pm}(X))=0$ we get that $L(X,P_{\pm}(X))=O(1)$ for both choices of the sign. But then, since $a_\pm$ are distinct this implies $L=0$, and since $\Phi$ is irreducible we have $k\neq 0$. Hence, setting $s:=a_++a_-$, $p:=a_+a_-$, we have $pk\neq 0$ and 
\begin{equation*}\label{Phi}
\Phi(X,Y)=(Y-a_+X)(Y-a_-X)-k=Y^2-sXY+pX^2-k.
\end{equation*} 

\medskip

Let now $x$ be a variable over $\C$ and let $y$ be a solution of $\Phi(x,y)=0$ in an extension of $\C(x)$, so $\F:=\C(x,y)$ is the function field of $C_\Phi$. Note that $\F$ is a  quadratic extension of both $\C(x)$ and $\C(y)$; looking at the equation we find that the  Galois groups are  generated respectively by the automorphisms $\sigma,\tau$ of $\F$ (of order $2$) given  by 
\begin{equation*}\label{Gal}
\sigma(x)=x, \quad \sigma(y)=sx-y\qquad  \tau(x)=\big({s\over p}\big)y-x,\quad \tau(y)=y.
\end{equation*}

It will be notationally convenient to have another expression for $\F$. Define the linear forms $Z_\pm :=Y-a_\pm X$, so $\Phi=Z_+Z_--k$. Letting $z_\pm=y-a_\pm x$ we thus have $z_+z_-=k$ and
\begin{equation*}\label{z}
x={z_+-z_-\over a_--a_+}=\gamma(z_+-z_-),\qquad  y=\gamma(a_-z_+-a_+z_-),
\end{equation*}
where we have put $\gamma:=(a_--a_+)^{-1}$.   So in particular we have $\F=\C(z_+)$ and by an easy computation one finds that  the above automorphisms are expressed by 
\begin{equation}\label{Gal2}
\sigma(z_+)=-z_-={\alpha\over z_+}, \qquad  \tau(z_+)= -{a_+\over a_-}z_-={\beta\over z_+},
\end{equation}
where $\alpha=-k$, $\beta=-ka_+/a_-$.

\medskip

Now, since $\Phi(x,y)=0$ we have $F(x,y)=0$ whence $f(x)=f(y)$, so the field $K:=\C(x)\cap\C(y)$  contains $\C(f(x))$  and thus the degree $[\F:K]$ is finite. The  field $K$ is left fixed by both $\sigma,\tau$, and thus by the group $G$  that they generate inside Aut$(\F/\C)={\rm PGL}_2(\C)$.  By basic Galois theory actually the fixed field of $G$ is precisely the intersection $\C(x)\cap\C(y)=K$. 

\medskip

We have $\sigma(\tau(z_+))=(\beta/\alpha)z_+$, hence $\beta/\alpha=a_+/a_-$ is a root of unity  of a certain order $n$: actually, we already knew that $a_+,a_-$ are $d$-th roots of unity, and they are distinct, so  $n>1$ is a divisor of $d$. 

The group $G$ is generated by $\sigma$ and $\xi:=\sigma\tau$. On looking at the action on $z_+$ it is now   easily seen that  $\sigma^{-1} \xi\sigma=\xi^{-1}$, so $G$  is a dihedral group of order $2n$.

\medskip

Now, the rational  function of $z_+$ given by $w:=z_+^n+\alpha^nz_+^{-n}$ of degree $2n$ is plainly invariant by both $\sigma$ and $\xi$, hence by $G$. Again by simple Galois theory, we have $\C(w)=K$. Therefore $f(x)$, which lies in $K$,  is a rational function of $w$, $f(x)=g(w)$.
(On comparing degrees we find $\deg g=d/n$.)

\medskip

Recall that $x=\gamma(z_+-z_-)=\gamma (z_++(-k)z_+^{-1})=\gamma(z_++\alpha z_+^{-1})$. Hence $x$ has only the poles $z_+=0,\infty$, and the same holds for $f(x)$ (as functions of $z_+$). It follows at once that $g$ must be  a polynomial, of degree $d/n$. 

\medskip

The proof is now easily completed by a simple change of variables.  We have $w\in K\subset \C(x)$, so we may write $w=S(x)$ with $S$ a rational function of degree $n$, which as above must be a polynomial.   

Set $z=\delta z_+$ where $\delta^2\alpha=1$. Hence $x=\gamma\delta^{-1}(z+z^{-1})$.  Also, $w=\delta^{-n}(z^n+z^{-n})$. Hence $\delta^nS(\gamma\delta^{-1}(z+z^{-1}))=z^n+z^{-n}$, and by uniqueness it follows that $\delta^nS(\gamma\delta^{-1} X)=T_n(X)$ is the Chebyshev polynomial of degree $n$. Hence in conclusion we find
\begin{equation*}
f(X)=g(\delta^{-n}T_n(\gamma^{-1}\delta X)),
\end{equation*}
as required. 

\medskip

To check the last assertion, for notational simplification we  slightly change conventions and replace $g(\delta^{-n}X)$ with $g(X)$ and $f(X)$ with $f(\gamma\delta^{-1}X)$, so to suppose $f(X)=g(T_n(X))$.  In the above notation, $x$ becomes $z+z^{-1}$ and $y=a_-z+a_+z^{-1}$. (Note that  these  substitutions leave unchanged the set $\{a_+,a_-\}$.)

Also,  let $\mu^2=a_+/a_-$, so $\mu^n=:\epsilon\in\{\pm 1\}$. We have 
$y=\mu a_-((z/\mu)+(z/\mu)^{-1})$, so  $T_n((\mu a_-)^{-1}y)=\epsilon (z^n+z^{-n})=\epsilon T_n(x)$. Hence, setting $\nu:=(\mu a_-)^{-1}$, we have 
\begin{equation*}
T_n(\nu y)=\epsilon T_n(x),\qquad g(T_n(y))=f(y)=f(x)=g(T_n(x))=g(\epsilon T_n(\nu y)).
\end{equation*}

Denoting $b:=\deg g=d/n$, we then deduce that
$\deg (T_n(y)^b-(\epsilon T_n(\nu y))^b\le (b-1)n$. But on factoring the left side  and noting that all factors but at most one have degree $\ge n$, this implies that in fact  one of the factors is constant, hence \footnote{This argument is fairly standard.}
\begin{equation}\label{nu}
T_n(y)=\theta \epsilon T_n(\nu y) +c,\qquad g(\theta X+c)=g(X),
\end{equation}
for some $b$-th root of unity $\theta$. Note  that all of these equalities  hold identically. 

Now,  the Chebyshev polynomial $T_n(X)$ starts with $X^n-nX^{n-2}+\ldots $, whence the first of the equations gives $\theta\epsilon\nu^n =\nu^2=1$. Also, if $n$ is odd then $T_n(0)=0$ whence $c=0$; if  $n$ is even then $\nu^n=1$ so $\theta\epsilon=1$ and again setting $y=0$ we find $c=0$ anyway. Conversely, if these equalities hold  it is easy to check that the equation holds, since $T_n$ has the same parity of $n$. So we may suppose in the sequel that $\theta\epsilon\nu^n =\nu^2=1$ and that $g(\theta X)=g(X)$.  

\medskip

Now, consider again the equation $T_n(\nu y)=\epsilon T_n(x)$, i.e. $\nu^nT_n(y)=\epsilon T_n(x)$. 

If $\nu^n=\epsilon$ we have $T_n(x)=T_n(y)$  so 
$\Phi(X,Y)$ divides $(T_n(X)-T_n(Y))/(X-Y)$, and we are in the first case of the conclusion. 

If $\nu^n\neq \epsilon$, then $T_n(x)=-T_n(y)$ hence $\Phi(X,Y)$ divides $T_n(X)+T_n(Y)$. Also,  we have already observed that   $\theta=\epsilon\nu^n$ which in this case equals $-1$ so $g$ is an even polynomial by the second equation in \eqref{nu} (since $c=0$), again as in the sough conclusion.

\medskip

Finally, from the above equations we derive 
\[p=a_+a_-=(a_+/a_-)(a_-)^2=(\mu a_-)^2=\nu^{-2}=1,\] 
hence $\Phi(X,Y)$ is symmetric.

\medskip

This concludes the proof of Theorem \ref{T.T2}. 
\end{proof}

\begin{rem} Actually, the proof  yields some small supplementary information on the structure of the factors (which however can be deduced independently {\it a posteriori}). 

We also note that the last conclusion could have been stated as follows: {\it if $n$ is maximal such  that the   decomposition  holds, then the quadratic factor anyway divides $S_n(l(X))-S_n(l(Y))$}. 

Indeed, if $g$ is even, then since $T_2(X)=X^2-2$, $g$  can be written as $h\circ T_2$ and now we use that $T_2\circ T_n=T_{2n}$ (well known and easy to deduce). 
\end{rem}

\vspace{.2cm} 
 
 \vspace{1cm}

Wade Hindes 

\noindent
Texas State University\\ 
601 University Dr.\\ 
San Marcos, TX 78666. \\
\email{wmh33@txstate.edu}
 
 \vspace{1cm}

Umberto Zannier

\noindent
Scuola Normale Superiore\\
Piazza dei Cavalieri, 7\\
56126 PISA -- Italy\\
umberto.zannier@sns.it

\end{document}